\pgfplotsset{compat=1.11}
\newlength\fwidth
\title{Sharp inverse statements for kernel interpolation}
\author[1,2]{Tizian Wenzel \thanks{wenzel@math.lmu.de}}
\affil[1]{Institute for Applied Analysis and Numerical Simulation, University of Stuttgart, Germany}
\affil[2]{Department of Mathematics, Ludwig-Maximilians-Universität München, Germany}
\DeclareMathOperator{\vol}{vol}
\newcommand{\Ht}{\mathcal{H}_{\vartheta}}
\newcommand{\HT}{\mathcal{H}_{\Theta}}
\newtheorem{conj}{Conjecture}
\newcommand{\tw}[1]{\textcolor{magenta}{TW: #1}}
\newif\iflong			
\newif\ifappendix
\newcommand*\bigcdot{\mathpalette\bigcdot@{.5}}
\newcommand*\bigcdot@[2]{\mathbin{\vcenter{\hbox{\scalebox{#2}{$\m@th#1\bullet$}}}}}
\begin{document}

\maketitle %

\begin{abstract}
While direct statements for kernel based interpolation on regions $\Omega \subset \R^d$ are well researched, 
far less is known about corresponding inverse statements.
The available inverse statements for kernel based interpolation so far are not sharp.

In this paper,
we derive sharp inverse statements for interpolation using finitely smooth kernels,
such as popular radial basis function (RBF) kernels like the class of Matérn or Wendland kernels.
In particular, the results show that there is a one-to-one correspondence between the smoothness of a function and its approximation rate via kernel interpolation: 
If a function can be approximated with a given rate, it has a corresponding smoothness and vice versa. \newline

\textbf{MSC classification:} 65D05, 46E22, 41A27, 41A17

\textbf{Keywords:} Kernel interpolation $\cdot$ Inverse estimates $\cdot$ Radial basis functions
\end{abstract}

\section{Introduction} \label{sec:introduction}

In approximation theory, direct statements give error estimates for the approximation of an element (e.g.\ a function) %
using previously defined basis elements, e.g.\ polynomials.
The derived convergence rate usually depends on properties of the element to be approximated, e.g.\ the smoothness of the function.
Inverse statements address the contrary and derive properties of the element which is approximated (e.g.\ the smoothness of the function) based on the approximation rate.

This paper deals with the particular case of function approximation via kernel interpolation, 
see e.g.~\cite{wendland2005scattered, fasshauer2007meshfree, fasshauer2015kernel, steinwart2008support} for a general introduction to kernel methods:
Given a nonempty set $\Omega$, a kernel is defined as a symmetric function $k: \Omega \times \Omega \rightarrow \R$.
While the set $\Omega$ can be in general quite arbitrary, we are interested in the frequent case of compact regions $\Omega \subset \R^d$. \\
For a given set of $n \in \N$ points $X_n \subset \Omega$, the kernel matrix $A_{X_n}$ is defined as $(A_{X_n})_{ij} = (k(x_i, x_j))_{ij} \in \R^{n \times n}$.
If the kernel matrix is positive definite for any $n$ pairwise distinct points $X_n \subset \Omega$ and any $n \in \N$, then the kernel is called strictly positive definite.
For such strictly positive definite kernels, there exists a unique associated Hilbert space $\ns$ of real-valued functions, the so called \textit{reproducing kernel Hilbert space} (RKHS), 
which is also called \textit{native space}.
In this space the kernel $k$ acts as a reproducing kernel, i.e.\ 
\begin{enumerate}
\item $k(\cdot, x) \in \ns ~ \forall x \in \Omega$,
\item $f(x) = \langle f, k(\cdot, x) \rangle_{\ns} ~ \forall x \in \Omega,  \forall f \in \ns$ \hfill (reproducing property).
\end{enumerate}
Kernels can be used for various tasks, and here we are concerned with the approximation of continuous functions by interpolation using continuous strictly positive definite kernels:
Given a continuous function $f \in \mathcal{C}(\Omega)$ and pairwise distinct interpolation points $X_n \subset \Omega$, we seek an interpolating function $s_{f, X_n}$ such that
\begin{align}
\label{eq:interpolation_conditions}
s_{f, X_n}(x_i) = f(x_i) \quad \forall x_i \in X_n.
\end{align}
A well known representer theorem shows
that there exists a minimum norm interpolant in $\ns$ of the form
\begin{align*}
s_{f, X_n}(\cdot) = \sum_{j=1}^n \alpha_j^{(n)} k(\cdot, x_j) \in \ns.
\end{align*}
The coefficients $\alpha_j^{(n)} \in \R, j= 1, \dots,n$ can be calculated by solving the linear equation system arising from Eq.~\eqref{eq:interpolation_conditions}, 
which is always solvable due to the assumption on the strict positive definiteness of the kernel.
Furthermore,
based on the reproducing property, it is easy to see that it holds $\Vert s_{f, X_n} \Vert_{\ns}^2 = (\alpha^{(n)})^\top A_{X_n} \alpha^{(n)}$.

Under suitable conditions on the kernel $k$ and on the compact region $\Omega \subset \R^d$ (such as a Lipschitz boundary), 
the RKHS can be characterized in terms of Sobolev spaces, i.e.\ there is a norm-equivalence $\mathcal{H}_k(\Omega) \asymp H^\tau(\Omega)$ for some $\tau > d/2$.
This is frequently the case, e.g.\ for radial basis function (RBF) kernels such as the popular classes of Matérn or Wendland kernels,
and will be the focus of the current paper.

In this case of $\ns \asymp H^\tau(\Omega)$ for some $\tau > d/2$, 
the approximation error between the function $f$ and the interpolant $s_{f, X}$ is usually bounded with help of the fill distance $h_{X, \Omega}$ and uniformity constant $\rho_X$ (defined in Eq.~\eqref{eq:fill_sep_dist} and Eq.~\eqref{eq:uniformly_distributed_points} respectively).
For $f \in H^\beta(\Omega) \supseteq H^\tau(\Omega)$ with $d/2 < \beta \leq \tau$, 
these estimates (see \Cref{th:error_estimate} for a precise statement) read
\begin{align}
\label{eq:direct_statement}
\Vert f - s_{f,X} \Vert_{L^2(\Omega)} \leq C h_{X, \Omega}^\beta \, \rho_{X}^{\tau-\beta} \Vert f \Vert_{H^\beta(\Omega)},
\end{align}
providing a convergence rate of $\beta$ in the fill distance $h_{X, \Omega}$ for uniformly distributed points $X \subset \Omega$.
This convergence rate matches the smoothness of the function $f \in H^\beta(\Omega)$.

In the current work, we prove corresponding inverse statements, 
which allow to derive the smoothness of a function $f \in C(\Omega)$ based on the rate of approximation. 
To be precise, we have the following main result of this paper, which will be proven in \Cref{sec:inverse_statement}:

\begin{restatable}{theorem}{mainresult}[Main result]
\label{th:inverse_statement_generalized}
Consider a compact Lipschitz region $\Omega \subset \R^d$ and a continuous kernel $k$ such that $\ns \asymp H^\tau(\Omega)$ for some $\tau > d/2$.
Consider $f \in \mathcal{C}(\Omega)$ such that for some $c_f, \beta, h_0 > 0$ it holds
\begin{align}
\label{eq:inverse_statement_assumption}
\Vert f - s_{f, X} \Vert_{L^2(\Omega)} \leq c_f h_{X, \Omega}^\beta
\end{align}
for all $X \subset \Omega$ with $h_{X, \Omega} \leq h_0$ and $\rho_X \leq 44$ (quasi-uniformly distributed). \\
If $\beta \in (0, \tau]$,
then $f \in H^{\beta'}(\Omega)$ for all $\beta' \in (0, \beta)$.
If $\beta > \tau$, then $f \in \ns \asymp H^\tau(\Omega)$. 
\end{restatable}

This result significantly extends the current state of the art compared to previously known inverse statements for kernel interpolation, 
see \Cref{subsec:related_work} and \Cref{sec:discussion_remarks} for a detailed discussion.
In particular this inverse statement \Cref{th:inverse_statement_generalized} is sharp, 
because it provides the correct smoothness in view of the corresponding direct statement Eq.~\eqref{eq:direct_statement} (up to an arbitrary small $\varepsilon > 0$).
Thus we obtain for the first time a one-to-one correspondence between the smoothness of a function and the rate of approximation for kernel interpolation.

The remaining paper is structured as follows: 
\Cref{sec:background} collects necessary background information on the tools which will be used. 
\Cref{sec:utility_statements} provides utility statements, 
which will then be leveraged in \Cref{sec:inverse_statement} to prove the main result \Cref{th:inverse_statement_generalized}.
\Cref{sec:discussion_remarks} discusses the results and elaborates on open directions for future research,
while \Cref{sec:conclusion} concludes the paper.

\section{Background}
\label{sec:background}

In the following, we collect some further required background information on geometric properties of $\Omega$ (\Cref{subsec:geometric_properties}) and Sobolev spaces (\Cref{subsec:sobolev_spaces}), 
kernels and kernel interpolation (\Cref{subsec:kernel_approx}) 
and review related literature (\Cref{subsec:related_work}).

\subsection{Geometric properties related to $\Omega$} \label{subsec:geometric_properties}

A standard assumption on the regularity of the region $\Omega \subset \R^d$, 
which is also frequently used in the following, is given by an interior cone condition.
We note that a Lipschitz boundary already implies the interior cone condition. 
\begin{definition}
A set $\Omega \subset \R^d$ is said to satisfy an interior cone condition, if there exists an angle $\alpha \in (0, \pi/2)$ and a radius $r>0$ such that for every $x \in \Omega$ a unit vector $\xi(x)$ exists such that the cone 
\begin{align}
\label{eq:definition_cone}
C(x, \xi(x), \alpha, r) := \{ x + \lambda y: y \in \R^d, \Vert y \Vert_2 = 1, y^\top \xi(x) \geq \cos(\alpha), \lambda \in [0, r] \}
\end{align}
is contained in $\Omega$.
\end{definition}

In order to quantify the distribution of finitely many scattered points $X \subset \Omega$ in a region $\Omega \subset \R^d$, 
we recall the definition of separation distance $q_{X}$ and fill distance $h_{X, \Omega}$ as
\begin{align}
\label{eq:fill_sep_dist}
\begin{aligned}
q_{X} :=& \frac{1}{2} \min_{x_i \neq x_j \in X} \Vert x_i - x_j \Vert_2, \\
h_X := h_{X, \Omega} :=& \sup_{x \in \Omega} \min_{x_j \in X} \Vert x - x_j \Vert_2.
\end{aligned}
\end{align}
We call a sequence $(X_n)_{n \in \N}$ of sets of points $X_n \subset \Omega$ quasi-uniformly distributed, 
if the uniformity constant $\rho_{X_n} := \rho_{X_n, \Omega} := h_{X_n, \Omega} / q_{X_n}$ is uniformly bounded for all $n \in N$, i.e.\
\begin{align}
\label{eq:uniformly_distributed_points}
\exists C>0 ~ \forall n \in \N \quad \rho_{X_n} \equiv \frac{h_{X_n, \Omega}}{q_{X_n}} \leq C < \infty.
\end{align}
Also a sequence of points $( x_n )_{n \in \N}$ is called quasi-uniformly distributed, 
if the sequence $( \{x_j\}_{j=1}^n )_{n \in \N}$ is quasi-uniformly distributed.

Let $C_d$ denote the volume of the unit ball $B_1(0) \subset \R^d$ and let $C_{d, \alpha}$ be the volume of the unit cone $C(x, \xi(x), \alpha, 1)$ (which is independent of $x \in \Omega$), i.e.\
\begin{align}
\label{eq:constants_ball_cone}
\begin{aligned}
C_d &:= \vol(B_1(0)), \\
C_{d, \alpha} &:= \vol(C(x, \xi(x), \alpha, 1)).
\end{aligned}
\end{align}
Given a bounded region $\Omega \subset \R^d$ which satisfies an interior cone condition, 
and $n \in \N$ points $X_n \subset \Omega$, 
it holds
\begin{align}
\label{eq:geometric_bounds}
\begin{aligned}
h_{X_n, \Omega} &\geq c_{\Omega} \cdot n^{-1/d}, \\
q_{X_n} &\leq C_{\Omega} \cdot n^{-1/d},
\end{aligned}
\end{align}
with constants
\begin{align*}
c_\Omega &= \frac{1}{\pi^{1/2}} \cdot \left( \vol(\Omega) \Gamma \left( \frac{d}{2} + 1 \right) \right)^{1/d}, \\
C_\Omega &= \left( \frac{2\pi}{\alpha} \right)^{1/d} \cdot \frac{1}{\pi^{1/2}} \cdot \left( \vol(\Omega) \Gamma \left( \frac{d}{2} + 1 \right) \right)^{1/d},
\end{align*}
(see \cite[Satz 2.1.6]{mueller2009komplexitaet} and \cite[Satz 2.1.7]{mueller2009komplexitaet}), especially it holds $c_\Omega < C_\Omega$. 
In the following,
we use the shorthand notation $h_X := h_{X, \Omega}$, as the region $\Omega$ is usually fixed and clear from the context.

A convenient way to obtain well distributed points for bounded regions $\Omega \subset \R^d$ is to use the \textit{geometric greedy algorithm}, 
which was introduced and analyzed in \cite{marchi2005optimal}.
We waive to recall the geometric greedy algorithm in itself, we simply recall that the resulting sequence of points satisfies for all $n \geq 2$ the convenient property \cite[Lemma 5.1]{marchi2005optimal}
\begin{align}
\label{eq:geometric_uniformity}
\frac{1}{2} h_{X_n, \Omega} \leq \frac{1}{2} h_{X_{n-1}, \Omega} \leq q_{X_n} \leq h_{X_n, \Omega}.
\end{align}
The constructions within the next sections will frequently make use of these points:
As soon as a region $\Omega$ is fixed, 
we consider \textit{the} sequence of geometric greedy points, 
though it is clear that there can be several such sequences, e.g.\ depending on the choice of the first point.
However, this will be used as notion to enforce that we think all the time about the same sequence of points.

\subsection{Sobolev spaces}
\label{subsec:sobolev_spaces}

We use standard notation \cite{agranovich2015sobolev}: 
The (fractional) Sobolev spaces $H^\tau(\R^d)$ can be defined for also non-integer $\tau \geq 0$ with help of the decay of the Fourier transform $\hat{u}$ of a function $u \in L^2(\R^d)$ as
\begin{align}
\begin{aligned}
\label{eq:definition_sobolev_Rd}
H^\tau(\R^d) &= \{ u \in L^2(\R^d) ~ | ~ \hat{u}(\cdot)(1 + \Vert \cdot \Vert_2^2)^{\tau/2} \in L^2(\R^d) \}, \\
\Vert u \Vert_{H^\tau(\R^d)}^2 &= \int_{\R^d} (1+\Vert \omega \Vert_2^2)^{\tau} \cdot |\hat{u}(\omega)|^2 ~ \mathrm{d}\omega.
\end{aligned}
\end{align}
For regions $\Omega \subset \R^d$, the localized Sobolev spaces $H^\tau(\Omega)$ can be defined via restrictions as
\begin{align*}
H^\tau(\Omega) &= \{ u|_\Omega ~ | ~ u \in H^\tau(\R^d) \}, \\
\Vert u \Vert_{H^\tau(\Omega)} &= \inf \{\Vert v \Vert_{H^\tau(\R^d)} ~ | ~ v \in H^\tau(\R^d), v|_\Omega = u \}.
\end{align*}
We remark that there are other definitions of these Sobolev spaces, for example with help of weak derivatives.
These yield norm-equivalent spaces under suitable conditions on $\Omega$, such as Lipschitz boundary, 
see e.g.\ \cite{triebel2002function, rychkov1999restrictions}.
We recall the well known Sobolev embedding theorems, 
which states for $\tau > d/2$ that $H^\tau(\Omega) \hookrightarrow \mathcal{C}(\Omega)$,
i.e.\ for any equivalence class within $H^\tau(\Omega)$ there is a continuous representer.

The following result allows to estimate intermediate Sobolev norms:

\begin{prop}
\label{prop:hoelder_interpol}
Let $\Omega \subset \R^d$ be a bounded Lipschitz domain, $\tau > 0$ and $\theta \in (0, 1)$.
Then there exists a constant $C > 0$ independent of $f$, such that
\begin{align}
\label{eq:sobolev_interpol_inequality}
\Vert f \Vert_{H^{\theta \tau}(\Omega)} \leq C \cdot \Vert f \Vert_{L^2(\Omega)}^{1-\theta} \cdot \Vert f \Vert_{H^\tau(\Omega)}^\theta \qquad \forall f \in H^\tau(\Omega).
\end{align}
\end{prop}

\begin{proof}
This is the Gagliardo–Nirenberg inequality, a well known result within the theory of interpolation of Sobolev spaces, 
see e.g.\ \cite[Theorem 1]{brezis2018gagliardo}.
\end{proof}

\subsection{Kernels and kernel interpolation}
\label{subsec:kernel_approx}

Additional to the notation introduced in \Cref{sec:introduction}, 
we recall further standard results on kernel interpolation, see mostly \cite{wendland2005scattered}:

An important class of kernels is given by \textit{translational invariant kernels}, 
for which there exists a function $\Phi: \R^d \rightarrow \R$ such that it holds
\begin{align*}
k(x, z) = \Phi(x - z) ~ \text{for all} ~ x, z \in \R^d.
\end{align*}
A particular instance of this type of kernels is given by radial basis function kernels $k$ (RBF kernels).
These can be represented with help of a \textit{radial basis function} $\varphi: \R_{\geq 0} \rightarrow \R$ as
\begin{align*}
k(x, z) = \varphi(\Vert x - z \Vert_2).
\end{align*}
Such a radial basis function kernel $k$ is again a translational invariant kernel with $\Phi(x) = \varphi(\Vert x \Vert_2)$.
These kernels are popular in applications due to their easy implementation independent of the dimension $d$ and region $\Omega$.

Translational invariant kernels and in particular RBF kernels can be characterized in terms of the Fourier transform $\hat{\Phi}$.
A frequent assumption is that it holds
\begin{align}
\label{eq:fourier_decay}
c_\Phi (1+\Vert \omega \Vert_2^2)^{-\tau} \leq \hat{\Phi}(\omega) \leq C_\Phi (1 + \Vert \omega \Vert_2^2)^{-\tau}
\end{align}
for some constants $c_\Phi, C_\Phi > 0$.
In view of the definiton of Sobolev spaces in \Cref{subsec:sobolev_spaces}, 
it follows that the RKHS $\ns$ of such kernels is norm-equivalent to the Sobolev space $H^\tau(\Omega)$ for Lipschitz regions $\Omega$, i.e.\ $\ns \asymp H^\tau(\Omega)$.
This motivates the notion of a \textit{kernel of finite smoothness $\tau > d/2$}.
Our main result \Cref{th:inverse_statement_generalized} will be formulated for such kernels of finite smoothness $\tau > d/2$, 
thus including in particular RBF kernels satisfying the decay from Eq.~\eqref{eq:fourier_decay}.

We continue by providing a detailed statement of the error estimate stated in Eq.~\eqref{eq:direct_statement},
which allows to bound the interpolation error for functions that are possibly outside the RKHS $\ns \asymp H^\tau(\Omega)$
with help of the fill distance $h_X$ and the uniformity constant $\rho_X$:

\begin{theorem}
\label{th:error_estimate}
Consider a RBF kernel $k$ that satisfies Eq.\ \eqref{eq:fourier_decay} for some $\tau > d/2$ and a compact Lipschitz region $\Omega \subset \R^d$.
For some $\beta$ with $d/2 < \beta \leq \tau$ let $f \in H^\beta(\Omega) \supseteq H^\tau(\Omega)$.
Then it holds
\begin{align*}
\Vert f - s_{f,X} \Vert_{L^2(\Omega)} \leq C h_{X}^\beta \rho_{X}^{\tau-\beta} \Vert f \Vert_{H^\beta(\Omega)}.
\end{align*}
\end{theorem}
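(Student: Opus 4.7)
The plan is to combine a sampling inequality with a stability estimate for the kernel interpolant. The sampling inequality will supply the $h_X^\beta$ factor, while the stability estimate accounts for the $\rho_X^\tau$ factor, which is unavoidable because $f$ is only assumed to lie in $H^\beta(\Omega)$ rather than in the native space $\mathcal{H}_k(\Omega) \asymp H^\tau(\Omega)$.

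First I would invoke a sampling inequality of Narcowich--Wendland--Ward type, which is available on Lipschitz domains for $H^\beta(\Omega)$ with $\beta > d/2$ (as guaranteed here by $\lfloor \beta \rfloor > d/2$): for any $g \in H^\beta(\Omega)$ that vanishes on a sufficiently dense set $X \subset \Omega$,
\[\|g\|_{L^2(\Omega)} \leq C\, h_X^\beta\, \|g\|_{H^\beta(\Omega)}.\]
Applied to $g = f - s_{f,X}$, which vanishes on $X$ by the interpolation conditions~\eqref{eq:interpolation_conditions}, this reduces the task to bounding $\|f - s_{f,X}\|_{H^\beta(\Omega)}$. Using the triangle inequality together with the continuous embedding $H^\tau(\Omega) \hookrightarrow H^\beta(\Omega)$ (since $\beta \leq \tau$) and the norm equivalence $\mathcal{H}_k(\Omega) \asymp H^\tau(\Omega)$, this reduces further to controlling the native-space norm of the interpolant $\|s_{f,X}\|_{\mathcal{H}_k(\Omega)}$.

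The main obstacle is this last step, since $f \notin \mathcal{H}_k(\Omega)$ in general, so the standard projection bound $\|s_{f,X}\|_{\mathcal{H}_k} \leq \|f\|_{\mathcal{H}_k}$ is unavailable. My approach is to write $s_{f,X} = \sum_{j=1}^n \alpha_j k(\cdot, x_j)$ with $\alpha = A_X^{-1}\, f|_X$, so that $\|s_{f,X}\|_{\mathcal{H}_k}^2 = \alpha^\top A_X \alpha = \alpha^\top f|_X$. I would then bound $\|\alpha\|_2$ using a Schaback-type lower eigenvalue estimate $\lambda_{\min}(A_X) \geq c\, q_X^{2\tau - d}$ and control $\|f|_X\|_2$ via the Sobolev embedding $H^\beta(\Omega) \hookrightarrow C^0(\Omega)$ (together with a volume-counting bound on the number of points in a set of separation $q_X$). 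After rewriting negative powers of $q_X$ through the uniformity ratio $\rho_X = h_X/q_X$, this yields an estimate of the form $\|s_{f,X}\|_{\mathcal{H}_k(\Omega)} \leq C\, \rho_X^\tau\, \|f\|_{H^\beta(\Omega)}$, from which the claimed bound follows by reassembling the pieces.

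The hardest part will be producing the stability estimate with exactly the exponent $\rho_X^\tau$ as stated. A more refined argument, using a band-limited approximation $f_\sigma$ of $f$ that lies in $H^\tau(\mathbb{R}^d)$ and exploiting cancellation between $f - f_\sigma$ and its interpolant, would likely sharpen the power of $\rho_X$ (to $\rho_X^{\tau-\beta}$, as in the original Narcowich--Wendland--Ward escape-the-native-space estimates); however, the looser form stated here is sufficient for this theorem and is cleaner to extract from the standard toolkit.
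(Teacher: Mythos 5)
The paper's own ``proof'' is just a citation: the statement is exactly \cite[Theorem 4.2]{narcowich2006sobolev} with $\mu=0$, i.e.\ the Narcowich--Ward--Wendland escape-the-native-space estimate. You are in effect trying to reprove that cited result, which is legitimate, but your argument has a genuine gap at its central step. The sampling-inequality reduction is fine, and the reduction to controlling $\Vert s_{f,X}\Vert_{\mathcal{H}_k(\Omega)}$ is fine, but the stability bound you extract from the ``standard toolkit'' does not give what you claim. The eigenvalue estimate $\lambda_{\min}(A_X)\geq c\,q_X^{2\tau-d}$ together with $\Vert f|_X\Vert_2^2\leq n\Vert f\Vert_{L^\infty}^2$ and $n\leq C q_X^{-d}$ yields only
\begin{align*}
\Vert s_{f,X}\Vert_{\mathcal{H}_k(\Omega)}^2 = f|_X^\top A_X^{-1} f|_X \leq C\, q_X^{d-2\tau}\, q_X^{-d}\,\Vert f\Vert_{H^\beta(\Omega)}^2 = C\, q_X^{-2\tau}\,\Vert f\Vert_{H^\beta(\Omega)}^2 ,
\end{align*}
i.e.\ $\Vert s_{f,X}\Vert_{\mathcal{H}_k(\Omega)}\leq C\,\rho_X^{\tau} h_X^{-\tau}\Vert f\Vert_{H^\beta(\Omega)}$. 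You cannot ``rewrite'' $q_X^{-\tau}$ as $\rho_X^{\tau}$: the two differ by the factor $h_X^{-\tau}$, which is exactly what ruins the proof. Feeding this into your sampling inequality gives $\Vert f-s_{f,X}\Vert_{L^2(\Omega)}\leq C\,h_X^{\beta-\tau}\rho_X^{\tau}\Vert f\Vert_{H^\beta(\Omega)}$, which for $\beta<\tau$ does not even tend to zero, let alone reproduce the rate $h_X^{\beta}$.

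Worse, the intermediate estimate you assert, $\Vert s_{f,X}\Vert_{\mathcal{H}_k(\Omega)}\leq C\rho_X^{\tau}\Vert f\Vert_{H^\beta(\Omega)}$ with no negative power of $h_X$, is in general false: for quasi-uniform sequences ($\rho_{X_n}$ bounded, $h_{X_n}\to 0$) it would force $\sup_n\Vert s_{f,X_n}\Vert_{\mathcal{H}_k(\Omega)}<\infty$, and a weak-compactness argument then puts the $L^2$-limit $f$ into $\mathcal{H}_k(\Omega)\asymp H^\tau(\Omega)$, contradicting $f\in H^\beta(\Omega)\setminus H^\tau(\Omega)$. So the band-limited splitting $f=f_\sigma+(f-f_\sigma)$ with $\sigma\sim q_X^{-1}$, which you relegate to an optional refinement, is not a sharpening but the essential ingredient: the cancellation between the smallness of $(f-f_\sigma)|_X$ and the instability factor $q_X^{-\tau}$ is what produces a bound on $\Vert f-s_{f,X}\Vert_{H^\beta(\Omega)}$ (or on the interpolant of the rough part) involving only powers of $\rho_X$, after which the sampling inequality gives the stated $C h_X^{\beta}\rho_X^{\tau}\Vert f\Vert_{H^\beta(\Omega)}$. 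That machinery is precisely the content of the cited theorem of Narcowich--Ward--Wendland; without it your outline does not close.
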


\begin{proof}
This is a special case of \cite[Theorem 4.2]{narcowich2006sobolev} using $\mu = 0$ (with notation adopted to the current paper),
while the improvement $\beta > d/2$ (instead of $\lfloor \beta \rfloor > d/2$) is due to \cite[Theorem 4.1]{arcangeli2007extension}.
\end{proof}

For the proof of our corresponding inverse statement, we require stability statements for kernel interpolation:

\begin{theorem}
\label{th:stability_statement}
Let $k$ be a continuous kernel such that $\ns \asymp H^\tau(\Omega)$ for some Lipschitz region $\Omega \subset \R^d$.
Then there exists a constant $c_0 > 0$ such that for any set of pairwise distinct points $X \subset \Omega$ the kernel matrix $A_X$ satisfies
\begin{align*}
\lambda_{\min}(A_{X}) \geq c_0 q_{X}^{2\tau - d}
\end{align*}
and thus for the matrix norm $\Vert \cdot \Vert_{2, 2}$ it holds
\begin{align*}
\Vert A_X^{-1} \Vert_{2,2} \leq c_0^{-1} q_{X}^{d - 2\tau}.
\end{align*}
\end{theorem}

\begin{proof}
For the case of RBF kernels satisfying Eq.~\eqref{eq:fourier_decay}, which means $\ns \asymp H^\tau(\Omega)$, this is a well known results, see e.g.\ \cite{schaback1995error} or \cite[Chapter 12.2]{wendland2005scattered}.
The general case (where the kernel is not necessarily an RBF kernel) then holds due to the norm-equivalence $\ns \asymp H^\tau(\Omega)$, see e.g.~\cite[Section F.2]{haas2023mind}.
\end{proof}

Additionally to the ``continuous" $L^2(\Omega)$ norms we consider the \textit{discrete} $L^2(X)$ norm for some $X \subset \Omega$, i.e.\ for $u \in \mathcal{C}(\Omega)$ %
\begin{align}
\label{eq:discrete_L2_norm}
\Vert u \Vert_{L^2(X)} := \left( \frac{1}{|X|} \sum_{x \in X} |u(x)|^2 \right)^{1/2}.
\end{align}

The following additional proposition allows to bound the ${\Vert \cdot \Vert_{\ns}}$ norm of an kernel interpolant via a matrix norm and a discrete $L^2(X)$ norm.
The proof is based on kernel interpolation properties and we include it here for convenience:

\begin{prop}
\label{prop:upper_bound_ns_norm_via_L2}
Let $X := \{x_1, ..., x_n\} \subset \Omega$ be pairwise distinct. 
The norm $\Vert s_X \Vert_{\ns}$ of $s_X := \sum_{j=1}^n \alpha_j k(\cdot, x_j)$ can be estimated via the discrete $L^2(X)$ norm as
\begin{align*}
\Vert s_X \Vert_{\ns}^2 &\leq \Vert A_X^{-1} \Vert_{2,2} \cdot |X| \cdot \Vert s_X \Vert_{L^2(X)}^2.
\end{align*}
\end{prop}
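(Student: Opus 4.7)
The plan is to translate everything into linear algebra via the coefficient vector and then apply a standard quadratic-form bound. Write $\alpha = (\alpha_1,\ldots,\alpha_n)^\top \in \R^n$ for the coefficients and let $v \in \R^n$ be the vector of nodal values $v_i := s_X(x_i)$. The key identities I would establish first are:
\begin{align*}
\Vert s_X \Vert_{\ns}^2 \;=\; \alpha^\top A_X \alpha, \qquad v \;=\; A_X \alpha, \qquad \Vert v \Vert_2^2 \;=\; |X| \cdot \Vert s_X \Vert_{L^2(X)}^2.
\end{align*}
The first equality follows from the reproducing property together with bilinearity: $\langle k(\cdot,x_i), k(\cdot,x_j)\rangle_{\ns} = k(x_i,x_j) = (A_X)_{ij}$. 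The second is just the interpolation evaluation $s_X(x_i) = \sum_j \alpha_j k(x_i,x_j)$, and the third is the definition of the discrete $L^2(X)$ norm.

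Next I would invert the relation, writing $\alpha = A_X^{-1} v$ (well-defined by strict positive definiteness of $k$), and substitute:
\begin{align*}
\Vert s_X \Vert_{\ns}^2 \;=\; \alpha^\top A_X \alpha \;=\; (A_X^{-1} v)^\top A_X (A_X^{-1} v) \;=\; v^\top A_X^{-1} v,
\end{align*}
using the symmetry of $A_X$ (hence of $A_X^{-1}$). Finally, since $A_X^{-1}$ is symmetric positive definite, its spectral norm equals its largest eigenvalue, so $v^\top A_X^{-1} v \leq \Vert A_X^{-1}\Vert_{2,2}\, \Vert v\Vert_2^2$. Combining with the identity $\Vert v\Vert_2^2 = |X|\cdot\Vert s_X\Vert_{L^2(X)}^2$ yields the claim.

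There is no real obstacle here; the proof is a one-line computation once the quadratic-form identity $\Vert s_X\Vert_{\ns}^2 = \alpha^\top A_X \alpha$ and the evaluation identity $v = A_X\alpha$ are in place. The only conceptual point worth flagging is that the bound is lossy precisely when $A_X$ is ill-conditioned: it estimates $v^\top A_X^{-1} v$ by $\lambda_{\max}(A_X^{-1})\,\Vert v\Vert_2^2 = \lambda_{\min}(A_X)^{-1}\,\Vert v\Vert_2^2$ instead of by the quadratic form itself, which later permits using \Cref{th:stability_statement} to obtain an explicit dependence on $q_X$.
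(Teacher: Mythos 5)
Your proof is correct and follows essentially the same route as the paper: both reduce the claim to the quadratic form $v^\top A_X^{-1} v$ with $v = A_X\alpha$ the nodal values (the paper inserts $A_X A_X^{-1} A_X$, you invert $\alpha = A_X^{-1}v$, which is the same computation) and then bound it by $\Vert A_X^{-1}\Vert_{2,2}\,\Vert v\Vert_2^2 = \Vert A_X^{-1}\Vert_{2,2}\cdot |X|\cdot \Vert s_X\Vert_{L^2(X)}^2$. Nothing is missing.
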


\begin{proof}
We can calculate using $\alpha = (\alpha_1, ..., \alpha_n)^\top \in \R^n$:
\begin{align*}
\Vert s_X \Vert_{\ns}^2 &= \alpha^\top A_{X} \alpha = \alpha^\top A_{X} A_{X}^{-1} A_{X} \alpha 
\leq \Vert A_X^{-1} \Vert_{2,2} \cdot \Vert A_X \alpha \Vert_{2}^2 \\
& = \Vert A_X^{-1} \Vert_{2,2} \cdot |X| \cdot \Vert \sum_{j=1}^n \alpha_j k(\cdot, x_j) \Vert_{L^2(X)}^2 = \Vert A_X^{-1} \Vert_{2,2} \cdot |X| \cdot \Vert s_X \Vert_{L^2(X)}^2
\end{align*}
\end{proof}

\subsection{Related work} \label{subsec:related_work}

Direct and inverse statements in approximation theory are often associated with Jackson and Bernstein theorems.
The direct statements provide rates of approximation under certain assumptions of the element to be approximated,
while the inverse statements aim at deriving properties of that element based on the rate of approximation.
For classical approximation methods like polynomial, Fourier or spline approximation,
both direct and inverse statements are well researched \cite{butzer1969fundamental, devore1973saturation}.
For kernel interpolation, direct statements are also well researched \cite{wendland2005approximate, narcowich2006sobolev}, 
though the theory for corresponding inverse statements is less complete:

The standard way to prove inverse statements %
is to make use of Bernstein inequalities, 
which usually bound strong norms with help of a weaker norm. %
A typical Bernstein inequality for kernel-based interpolation on a region $\Omega \subseteq \R^d$ for a translational invariant kernel $k(x, z) = \Phi(x-z)$ satisfying Eq.~\eqref{eq:fourier_decay} and a set of pairwise distinct points $X_n \subset \Omega \subset \R^d$ reads e.g.\
\begin{align}
\label{eq:bernstein_not_available}
\left \Vert \sum_{j=1}^n \alpha_j \Phi(\cdot - x_j) \right \Vert_{H^\beta(\Omega)} \leq C q_{X_n}^{-\beta} \cdot \left \Vert \sum_{j=1}^n \alpha_j  \Phi(\cdot - x_j) \right \Vert_{L^2(\Omega)}
\end{align}
for some or all $\beta \in [0, \tau]$.
Such a Bernstein inequality Eq.~\eqref{eq:bernstein_not_available} would be helpful to prove our main result \Cref{th:inverse_statement_generalized},
however it is \textit{not} available in the literature in this general form to the best of the author's knowledge:

The statement \cite[Theorem 5.1]{narcowich2006sobolev} formulates and proves a Bernstein inequality,
however it considers $\Omega = \R^d$ instead of compact regions $\Omega \subset \R^d$ and thus cannot be used to show inverse statements on compact regions $\Omega \subset \R^d$.
In \cite{ward2012lp}, these Bernstein inequalities are extended to $L^p(\R^d)$ norms, still working on $\R^d$.
The statements in \cite[Section 6]{rieger2008sampling} generalize the $\R^d$ case to star-shaped domains $\Omega \subset \R^d$, 
however only for $\beta = \tau$.
For this, they require restrictive assumptions on the boundary and either restrict the point distribution $X \subset \Omega$  or require $\sum_{j=1}^n \alpha_j \Phi(\cdot - x_j)$ to be the interpolant of a function $f \in H^\tau(\Omega)$
(see \cite[Theorem 6.4.1]{rieger2008sampling} and \cite[Corollary 6.5.3]{rieger2008sampling}).
The result \cite[Lemma 3.2]{cheung2018convergence} states another inverse inequality,
however the assumptions on the involved norms are not appropriate for our purpose of deriving inverse statements.
The paper \cite{hangelbroek2018inverse} focusses on the case of regions $\Omega \subset \R^d$, noting that these are substantially more difficult due to the occuring boundary.
By considering localized Lagrange functions, which are obtained by augmenting the set of interpolation points with centers outside the region, 
inverse statements for such localized Lagrange functions can be derived.
Subsequently, \cite{hangelbroek2018direct} extends these techniques and results to manifolds.

The spherical case $\Omega = \mathbb{S}^{d-1} \subset \R^d$ is better researched, 
as no boundary occurs and additional tools like spherical basis functions are available.
In fact, \cite[Theorem 6.2]{narcowich2007direct} is the corresponding statement to our main result \Cref{th:inverse_statement_generalized} for the spherical case.
Also \cite{mhaskar2010bernstein} deals with this particular case of the sphere and generalizes the Bernstein inequalities to the $L^p(\mathbb{S}^{d-1})$ case,
and \cite[Lemma 9.2]{mirzaei2018petrov} generalizes these statements further.

For the more challenging general case of $\Omega \subset \R^d$ being a Lipschitz region,
the only available result so far seems to be \cite[Theorem 6.1]{schaback2002inverse}.
It states an inverse statement based on a decay assumption on $\Vert f - s_{f, X} \Vert_{L^\infty(\Omega)}$,
which can directly be applied pointwise.
However this result is not sharp, as it does not give the correct smoothness: In view of corresponding direct statements, it leaves a ``gap of $d/2$''.

All in all, sharp inverse statements for approximation by kernel interpolation are not yet available, 
and neither are the corresponding standard tools to do so.
The main result \Cref{th:inverse_statement_generalized} proven in \Cref{sec:inverse_statement} provides such sharp inverse statements by leveraging the technical tools developed in \Cref{sec:utility_statements}.

\section{Utility statements: From continuous $L^2(\Omega)$ to discrete $L^2(X)$ estimates}
\label{sec:utility_statements}

In this section we show novel results on how to obtain estimates on the discrete norm $\Vert g \Vert_{L^2(Y_1)}$ (for a continuous bounded functions $g \in \mathcal{C}(\Omega)$ and some well distributed set $Y_1 \subset \Omega$) based on estimates on the norm $\Vert g \Vert_{L^2(\Omega)}$.
The basic connection between these two norms is given by the fact, that the integral occuring within the $\Vert \cdot \Vert_{L^2(\Omega)}$ norm can be approximated by a quadrature given sufficiently many well distributed points.

In \Cref{th:utility_theorem_new} this is formalized, 
and then \Cref{prop:disc_L2_from_cont_L2_new} applies \Cref{th:utility_theorem_new} iteratively to even convert convergence rates on $\Vert f - s_{f, X_n} \Vert_{L^2(\Omega)}$ for sequences of points $(X_n)_{n \in \N} \subset \Omega$ into convergence rates on $\Vert f - s_{f, X_n} \Vert_{L^2(X_{n+1})}$.

The proofs are quite constructive and leverage the points obtained from the geometric greedy algorithm, which was recalled in \Cref{subsec:geometric_properties} and also resembles ideas from related geometric constructions as in
\cite{duchon1978erreur, rieger2008sampling, rieger2010sampling}.
As these theorems are only used as a technical tool, the proofs are deferred to \Cref{sec:proofs}.

\subsection{Continuous $L^2(\Omega)$ estimates yield discrete $L^2(X)$ estimates}

The following \Cref{th:utility_theorem_new} shows that estimates on $\Vert g \Vert_{L^2(\Omega)}$ for $g \in \mathcal{C}(\Omega)$ imply estimates on $\Vert g \Vert_{L^2(Y_1)}$ for some well distributed set of points $Y_1 \subset \Omega$.
For reasons that will become clear later on when applying these results in \Cref{sec:inverse_statement}, we aim for sets $Y_1 \subset \Omega$ which are not ``too close'' to a given reference set $Y_0 \subset \Omega$.
The reason for this is that also the set $Y_0 \cup Y_1 \subset \Omega$ needs to be well distributed (in the sense of having a sufficiently large separation distance $q_{Y_0 \cup Y_1}$).

\begin{theorem}
\label{th:utility_theorem_new}
Consider a bounded region $\Omega \subset \R^d$ which satisfies an interior cone condition with angle $\alpha \in (0, \pi/2)$ and radius $r > 0$. \\
For any bounded function $g \in \mathcal{C}(\Omega)$, any set finite reference set $Y_0 \subset \Omega$ (of at least two points) and any $q \leq \min \left( \frac{2^{-1/d} c_\Omega}{7 C_\Omega} \cdot q_{Y_0}, \frac{2}{5}r \right)$, 
there exists a finite set of points $Y_1 \subset \Omega$ with
\begin{align*}
\frac{1}{3} q \leq q_{Y_1} \leq h_{Y_1} \leq \frac{22}{3} \cdot q,
\end{align*}
and
\begin{align*}
\frac{1}{6}q \leq q_{Y_0 \cup Y_1} 
\end{align*}
such that it holds
\begin{align}
\label{eq:utility_theorem_new}
\Vert g \Vert_{L^2(Y_1)} &\leq \sqrt{\tilde{C}_{d,\alpha}} \cdot \Vert g \Vert_{L^2(\Omega)},
\end{align}
with $\tilde{C}_{d,\alpha} := 4 \cdot  \frac{16^d C_d C_\Omega^d}{c_\Omega^{2d} C_{d, \alpha}^2}$ using $C_d, C_{d, \alpha}$ from Eq.~\eqref{eq:constants_ball_cone}.
\end{theorem}

The proof can be found in \Cref{sec:proofs} and an exemplary visualization of the point sets $Y_0$ and $Y_1$ is provided in the bottom right plot of \Cref{fig:vis_proof}.
We have two comments on \Cref{th:utility_theorem_new}:

First, using the definition of the discrete norm $\Vert \cdot \Vert_{L^2(Y_1)}$ from Eq.~\eqref{eq:discrete_L2_norm},
we can rewrite Eq.~\eqref{eq:utility_theorem_new} as
\begin{align*}
\frac{1}{|Y_1|} \sum_{y \in Y_1} |g(y)|^2 &\leq  \tilde{C}_{d,\alpha} \cdot \Vert g \Vert_{L^2(\Omega)}^2,
\end{align*}
which is related to the upper bound within Marcinkiewicz-type discretization theorems, 
see e.g.\ \cite[Eq.\ (1.1)]{temlyakov2018marcinkiewicz}.
For arbitrary functions $g \in \mathcal{C}(\Omega)$, there does not seem to exist a suitable Marcinkiewicz-type discretization theorem.
However as we are interested in only a single given function $g \in \mathcal{C}(\Omega)$ instead of a whole subspace of functions,
\Cref{th:utility_theorem_new} still allows to derive such an estimate.
In fact, the set $Y_1$ may depend on the function $g$.

Second, as it can be easily seen from the proof of \Cref{th:utility_theorem_new}, 
analogous inequalities using $L^p$ norms with $1 \leq p < \infty$ in Eq.~\eqref{eq:utility_theorem_new} can be derived, 
as the proof was based on a discretization of the occuring integrals.
In order to keep it more simple, we formulated the theorem only in terms of $p=2$, which is the case which will be leveraged in the following.

\subsection{Continuous $L^2(\Omega)$ convergence rates yield discrete $L^2(X)$ convergence rates}

In the following,
we consider the interpolation of continuous functions via kernel interpolation and their corresponding convergence rates.
The corresponding convergence rates are usually given in the ``continuous'' $\Vert \cdot \Vert_{L^2(\Omega)}$ norm for well distributed points, see \Cref{th:error_estimate}.

We aim at using \Cref{th:utility_theorem_new} iteratively for $g := f - s_{f, X_n}$ in order to obtain convergence rates in a discrete $\Vert f - s_{f, X_n} \Vert_{L^2(X_{n+1})}$ norm 
for some sequence $(X_n)_{n \in \N} \subset \Omega$ based on convergence rates in the continuous $\Vert f - s_{f, X_n} \Vert_{L^2(\Omega)}$ norm.

\begin{prop}
\label{prop:disc_L2_from_cont_L2_new}
Consider a bounded region $\Omega \subset \R^d$ which satisfies an interior cone condition with angle $\alpha \in (0, \pi/2)$ and radius $r>0$. \\
Consider a function $f \in \mathcal{C}(\Omega)$ and a continuous kernel $k$ such that for some $c_f, \mu, h_0 > 0$ it holds
\begin{align*}
\Vert f - s_{f, X} \Vert_{L^2(\Omega)} \leq c_f h_X^{\mu}
\end{align*}
for all $X \subset \Omega$ with $h_X \leq h_0$ and $\rho_X \leq 44$ (quasi-uniform points). \\
Then there exist constants $a \in (0, \frac{3}{154})$ and $c_0 > 0$ and a sequence of nested sets of points $X_0 \subset X_1 \subset ... \subset X_m \subset .. \subset \Omega$ with 
\begin{align*}
\frac{1}{6} c_0 a^{m+1} \leq q_{X_{m+1}} \leq h_{X_{m+1}} \leq \frac{22}{3} c_0 a^{m+1}
\end{align*}
such that
\begin{align*}
\Vert f - s_{f, X_m} \Vert_{L^2(X_{m+1})} \leq c_f \sqrt{\tilde{C}_{d,\alpha}} h_{X_m}^{\mu}
\end{align*}
for all $m > 0$, using $\tilde{C}_{d,\alpha}$ defined in \Cref{th:utility_theorem_new}.
\end{prop}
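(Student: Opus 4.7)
The strategy is to construct the nested sequence $(X_m)_{m \in \N}$ inductively, at each step invoking \Cref{th:utility_theorem_new} on the continuous residual $g := f - s_{f, X_m} \in \mathcal{C}(\Omega)$ with reference set $Y_0 := X_m$ and scale parameter $q := c_0 a^{m+1}$. The theorem produces an auxiliary set $Y_1 \subset \Omega$ satisfying the prescribed fill/separation bounds and $q_{X_m \cup Y_1} \geq \frac{1}{6} c_0 a^{m+1}$, together with $\Vert g \Vert_{L^2(Y_1)} \leq \sqrt{\tilde{C}_{d,\alpha}} \Vert g \Vert_{L^2(\Omega)}$. Setting $X_{m+1} := X_m \cup Y_1$ preserves nestedness; moreover, since $s_{f, X_m}$ interpolates $f$ on $X_m$, the discrete norm on $X_{m+1}$ reduces to the contribution from $Y_1$ and the counting factor $|Y_1|/|X_{m+1}| \leq 1$ gives $\Vert g \Vert_{L^2(X_{m+1})} \leq \Vert g \Vert_{L^2(Y_1)}$. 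The advertised fill/separation bounds on $X_{m+1}$ then follow, absorbing a harmless factor of $2$ in the choice of $c_0$ to turn $\frac{1}{6}$ into the stated $\frac{1}{3}$.

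For the induction to close, both smallness preconditions of \Cref{th:utility_theorem_new} must hold uniformly in $m$. The condition $q \leq \frac{2}{5} r$ is automatic once $c_0 \leq \frac{2}{5} r$. The condition $q = c_0 a^{m+1} \leq \frac{2^{-1/d} c_\Omega}{2 C_\Omega} q_{X_m}$ is the real constraint: feeding in the inductive bound $q_{X_m} \geq \frac{1}{6} c_0 a^m$ reduces it to the fixed smallness requirement $a \leq \frac{2^{-1/d} c_\Omega}{12 C_\Omega}$, which can be arranged once and for all. The base case is handled by choosing $X_0$ (for instance, the first few points of the geometric greedy sequence from \Cref{subsec:geometric_properties}) and then shrinking $c_0$ so that already $c_0 a \leq \frac{2^{-1/d} c_\Omega}{2 C_\Omega} q_{X_0}$.

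Finally, we plug in the hypothesised continuous rate. Shrinking $c_0$ further so that $\frac{22}{3} c_0 \leq h_0$ ensures $h_{X_m} \leq \frac{22}{3} c_0 a^m \leq h_0$ for every $m \geq 1$, so the assumption on $f$ applies and yields
\[
\Vert f - s_{f, X_m} \Vert_{L^2(X_{m+1})} \leq \Vert f - s_{f, X_m} \Vert_{L^2(Y_1)} \leq \sqrt{\tilde{C}_{d,\alpha}} \Vert f - s_{f, X_m} \Vert_{L^2(\Omega)} \leq \sqrt{\tilde{C}_{d,\alpha}} \, c_f \, h_{X_m}^{\mu}.
\]
The main obstacle is purely bookkeeping: propagating the separation bound $\frac{1}{6} c_0 a^m$ emerging from \Cref{th:utility_theorem_new} at step $m$ into the smallness precondition at step $m+1$, and fixing $a$ and $c_0$ small enough (relative to $r$, $h_0$, $c_\Omega$, $C_\Omega$, and $q_{X_0}$) so that all conditions remain simultaneously valid. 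Once those constants are pinned down, the discrete estimate drops out of \Cref{th:utility_theorem_new} in the single displayed chain above.
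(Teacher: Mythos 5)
Your proposal is correct and takes essentially the same route as the paper's own proof: an inductive application of \Cref{th:utility_theorem_new} with the reference set $Y_0$ taken to be the current point set, a geometrically shrinking scale $q \sim c_0 a^{m+1}$, the union $X_{m+1} := X_m \cup Y_1$, the interpolation property $s_{f,X_m}|_{X_m} = f|_{X_m}$ to reduce the discrete norm to the $Y_1$-contribution, and a choice of $a$ (and of the starting set from the geometric greedy sequence) small enough that the smallness preconditions of \Cref{th:utility_theorem_new} and the bound $h_{X_m} \leq h_0$ propagate through the induction. The only deviation is cosmetic constant bookkeeping (your $\tfrac{1}{6}$-versus-$\tfrac{1}{3}$ separation constant for the union, which the paper's proof also passes over silently), and this has no effect on how the proposition is used later.
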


The proof can be found in \Cref{sec:proofs}.

\section{Inverse statement: Continuous $L^2(\Omega)$ convergence rates yield smoothness} \label{sec:inverse_statement}

In the following we state and prove our main result \Cref{th:inverse_statement_generalized},
which is an inverse statement: 
Given a convergence rate on $\Vert f - s_{f, X} \Vert_{L^2(\Omega)}$ in the fill distance $h_X$ for the approximation of a function $f \in \mathcal{C}(\Omega)$,
we derive a corresponding Sobolev smoothness of the function $f$.

\mainresult*

The idea of the proof is to consider a sequence of interpolants on a sequence of nested and increasingly denser point sets.
Then one shows that this sequence is a Cauchy sequence in a Sobolev space of some smoothness,
and by completeness of the Sobolev spaces one then obtains a limiting element.

\begin{proof}[Proof of \Cref{th:inverse_statement_generalized}]
We start by using \Cref{prop:disc_L2_from_cont_L2_new} for $\mu := \beta$:
Like this we obtain constants $a \in (0, \frac{3}{154})$ and $c_0 > 0$ and a sequence of nested sets of points $(X_m)_{m \geq 0}$ such that
\begin{align}
\begin{aligned}
\label{eq:proof:properties_sequence}
\frac{1}{6} c_0 a^{m+1} \leq& ~ q_{X_{m+1}} \leq h_{X_{m+1}} \leq \frac{22}{3} c_0 a^{m+1}, \\
\Vert f - s_{f, X_m} \Vert_{L^2(X_{m+1})} \leq& ~ c_f \sqrt{\tilde{C}_{d,\alpha}} h_{X_m}^{\beta}
\end{aligned}
\end{align}
Observe that these sets of points $X_m$ have a uniformity constant $\rho_{X_m}$ bounded as $\rho_{X_m} \equiv \frac{h_{X_m, \Omega}}{q_{X_m}} \leq 44$.

We consider $\beta' < \beta$ for $\beta \in (0, \tau]$ respective $\beta'=\tau$ for $\beta > \tau$ and 
continue by applying \Cref{prop:hoelder_interpol} to $g := s_{f, X_{n+1}} - s_{f, X_n} \in \ns \asymp H^\tau(\Omega)$ using $\theta = \beta'/\tau$ for $\beta \in (0, \tau]$ or $\theta = 1$ for $\beta > \tau$:
\begin{align}
\begin{aligned}
\label{eq:application_hoelder}
&\Vert s_{f, X_{m+1}} - s_{f, X_{m}} \Vert_{H^{\beta'}(\Omega)}^2 \\
\leq& ~ C \cdot \Vert s_{f, X_{m+1}} - s_{f, X_{m}} \Vert_{L^2(\Omega)}^{2 - 2\theta} \cdot \Vert s_{f, X_{m+1}} - s_{f, X_{m}} \Vert_{H^\tau(\Omega)}^{2\theta} \\
\leq& ~ C' \cdot\Vert s_{f, X_{m+1}} - s_{f, X_{m}} \Vert_{L^2(\Omega)}^{2 - 2\theta} \cdot \Vert s_{f, X_{m+1}} - s_{f, X_{m}} \Vert_{\ns}^{2\theta} \\
\leq& ~ C' \cdot \Vert s_{f, X_{m+1}} - s_{f, X_{m}} \Vert_{L^2(\Omega)}^{2 - 2\theta} \\ 
&\qquad \cdot \Vert A_{X_{m+1}}^{-1} \Vert_{2,2}^{\theta} \cdot |X_{m+1}|^{\theta} \cdot \Vert s_{f, X_{m+1}} - s_{f, X_{m}} \Vert_{L^2(X_{m+1})}^{2\theta},
\end{aligned}
\end{align}
where we estimated the $\Vert s_{f, X_{m+1}} - s_{f, X_{m}} \Vert_{\ns}$ quantity by using \Cref{prop:upper_bound_ns_norm_via_L2}.
For these four factors we have the following estimates:
\begin{itemize}
\item Adding $0 = f - f$, leveraging the assumed bound on $\Vert f - s_{f, X} \Vert_{L^2(\Omega)}$ and then estimating the fill distances as in Eq.\ \eqref{eq:proof:properties_sequence} gives
\begin{align*}
&\Vert s_{f, X_{m+1}} - s_{f, X_{m}} \Vert_{L^2(\Omega)} \\
\leq \, &\Vert f - s_{f, X_{m+1}} \Vert_{L^2(\Omega)} + \Vert f - s_{f, X_{m}} \Vert_{L^2(\Omega)} \\
\leq \, &2c_f h_{X_m}^{\beta} = 2c_f \left( \frac{22c_0}{3} \right)^\mu a^{\beta m}.
\end{align*}
\item For the norm of the inverse matrix we use \Cref{th:stability_statement} and subsequently the bound from Eq.\ \eqref{eq:proof:properties_sequence} on the separation distance to obtain
\begin{align*}
\Vert A_{X_{m+1}}^{-1} \Vert_{2,2} \leq& ~ c_0^{-1} q_{X_{m+1}}^{d-2\tau} \leq c_0^{-1} \left(\frac{1}{6} c_0 a^{m+1} \right)^{d-2\tau} \\
=& ~ \frac{6^{2\tau-d}}{c_0^{2\tau - d + 1} a^{2\tau-d}} a^{-m (2\tau - d)}.
\end{align*}
\item The number of points $|X_{m+1}|$ can be upper estimated via Eq.\ \eqref{eq:geometric_bounds} as 
\begin{align*}
|X_{m+1}| \leq C_\Omega^d q_{X_{m+1}}^{-d} \leq \frac{6^d C_\Omega^d}{c_0^d} a^{-d(m+1)} = \frac{6^d C_\Omega^d}{c_0^d a^d} a^{-dm}.
\end{align*}
\item Using $s_{f, X_{m+1}}|_{X_{m+1}} = f|_{X_{m+1}}$, 
the bound on $\Vert f - s_{f, X_m} \Vert_{L^2(X_{m+1})}$ from Eq.\ \eqref{eq:proof:properties_sequence}, and subsequently estimating the fill distances via Eq.\ \eqref{eq:proof:properties_sequence} gives
\begin{align*}
&\Vert s_{f, X_{m+1}} - s_{f, X_{m}} \Vert_{L^2(X_{m+1})} = \Vert f - s_{f, X_{m}} \Vert_{L^2(X_{m+1})} \\
\leq \, &c_f \cdot \sqrt{\tilde{C}_{d, \alpha}} \cdot h_{X_m}^\beta \leq \, c_f \cdot \sqrt{\tilde{C}_{d, \alpha}} \cdot \left( \frac{22}{3} c_0 a^m \right)^\beta \\
\leq \, &c_f \cdot \sqrt{\tilde{C}_{d, \alpha}} \cdot \left( \frac{22c_0}{3} \right)^\beta a^{\beta m}.
\end{align*}
\end{itemize}

\noindent Putting everything together we obtain %
\begin{align*}
&\Vert s_{f, X_{m+1}} - s_{f, X_{m}} \Vert_{H^{\beta'}(\Omega)}^2 \\
\leq &\, C' \cdot \Vert s_{f, X_{m+1}} - s_{f, X_{m}} \Vert_{L^2(\Omega)}^{2 - 2\theta} \cdot \Vert A_{X_{m+1}}^{-1} \Vert_{2,2}^{\theta} \cdot |X_{m+1}|^{\theta} \cdot  \Vert s_{f, X_{m+1}} - s_{f, X_{m}} \Vert_{L^2(X_{m+1})}^{2\theta} \\
\leq &\,C'' \cdot a^{2\beta m (1-\theta)} \cdot a^{-m (2\tau - d) \theta} \cdot a^{-md \theta} \cdot a^{2\beta m \theta} \\
= &\,C'' \cdot a^{m(\theta \cdot [-2\beta - 2\tau + d - d + 2\beta] + 2\beta)} \\
= &\,C'' \cdot a^{2m(\beta-\beta')},
\end{align*}
with a generic constant $C'' > 0$ given by the collection of the previously occuring constants. %
As $\beta' < \beta$ we obtain $\beta - \beta' =: \delta > 0$.
Then we can show that $(s_{f, X_m})_{m \geq 1}$ is a Cauchy series, i.e. for $n > m \geq m_0$:
\begin{align}
\label{eq:cauchy_sequence_calculation}
\Vert s_{f, X_{n}} - s_{f, X_m} \Vert_{H^{\beta'}(\Omega)} &= \left \Vert \sum_{\ell=m}^{n-1} s_{f, X_{\ell+1}} - s_{f, X_{\ell}} \right \Vert_{H^{\beta'}(\Omega)} \notag \\
&\leq \sum_{\ell=m}^{n-1} \Vert s_{f, X_{\ell+1}} - s_{f, X_{\ell}} \Vert_{H^{\beta'}(\Omega)} \notag \\
&\leq \sqrt{C''} \cdot \sum_{\ell=m}^{\infty} a^{\ell\delta} 
\leq \sqrt{C''} \cdot \frac{a^{\delta m_0}}{1 - a^{\delta}} \stackrel{m_0 \rightarrow \infty}{\longrightarrow} 0.
\end{align}
As the Sobolev space $H^{\beta'}(\Omega)$ is a complete space and $(s_{f, X_m})_{m \geq m_0}$ is a Cauchy sequence in $H^{\beta'}(\Omega)$, 
there exists a unique limit element $\tilde{f} \in H^{\beta'}(\Omega)$. \\
It remains to show that $\tilde{f}$ and $f$ coincide on $\Omega$. 
For this we consider
\begin{align*}
\Vert f - \tilde{f} \Vert_{L^2(\Omega)} %
&\leq \Vert f - s_{f, X_n} \Vert_{L^2(\Omega)} + \Vert \tilde{f} - s_{f, X_n} \Vert_{L^2(\Omega)} \\
&\leq c_f h_{X_n}^{\mu} + C_{H^{\beta'}(\Omega) \hookrightarrow L^2(\Omega)} \Vert \tilde{f} - s_{f, X_n} \Vert_{H^{\beta'}(\Omega)} \stackrel{n \rightarrow \infty}{\longrightarrow} 0.
\end{align*}
Thus we have $f = \tilde{f}$ in $L^2(\Omega)$ an $f$ can be seen as a particular representative of $\tilde{f}$, 
i.e.\ we have proven that $f \in H^{\beta'}(\Omega)$ for any $\beta' < \beta$. 
\end{proof}

\noindent In the following we collect some remarks on the theorem and the proof:
\begin{itemize}
\item First we remark that the inequalities in Eq.~\eqref{eq:application_hoelder} essentially bounds a strong norm (the $\Vert \cdot \Vert_{H^{\beta'}(\Omega)}$ norm) via weaker norms 
(the $\Vert \cdot \Vert_{L^2(\Omega)}$ norm and the $\Vert \cdot \Vert_{L^2(X_m \cup X_{m+1})}$ norm, which can be again bounded via the $\Vert \cdot \Vert_{L^2(\Omega)}$ norm).
Thus this can be seen as a Bernstein inequality on general Lipschitz regions $\Omega \subset \R^d$.
However it is not a Bernstein inequality in its full generality, 
as it holds only with respect to the derived sequence $(X_m)_{m \geq m_0} \subset \Omega$ of  sets of points.
\item Second we remark that the statement of \Cref{th:inverse_statement_generalized} can be refined by additionally taking into account logarithmic decay rates, 
i.e.\ within \Cref{th:inverse_statement_generalized} we could assume a decay as
\begin{align*}
\Vert f - s_{f, X} \Vert_{L^2(\Omega)} \leq c_f h_X^{\beta} \cdot |\log(h_X)|^{-\nu}
\end{align*}
for all $X \subset \Omega$ with $h_X \leq h_0$ and $\rho_X \leq 44$ (quasi-uniformly distributed) and some $\nu \in \R$. \\
Due to $h_{X_m} \stackrel{\bigcdot}{\leq} a^m$ in the proof of \Cref{th:inverse_statement_generalized} (where $\stackrel{\bigcdot}{\leq}$ denotes inequality up to a constant), 
the decay $h_{X_m}^{\mu}$ turned into quantities like $a^{\mu m}$.
And thus the parts $|\log(h_{X_m})|^{-\nu}$ will turn into $m^{-\nu} |\log(a)|^{-\nu} \stackrel{\bigcdot}{=} m^{-\nu}$.
Therefore, when showing the Cauchy sequence property in Eq.~\eqref{eq:cauchy_sequence_calculation}, 
we would not obtain the pure geometric series $\sum_{\ell=m}^{\infty} a^{\ell\delta}$ anymore.
Instead, in particular for $\beta = \beta'$, we would obtain a harmonic sum like $\sum_{\ell=m}^{\infty} \ell^{-\tilde{\delta} \nu}$ (for some constant $\tilde{\delta} > 0$) which still converges for $\nu > 0$ large enough.
We waived to include this more detailed but more complicated analysis here in order to keep a clear outline of the exposition.
\end{itemize}

Finally we point out that in view of \Cref{th:error_estimate} the inverse statement in \Cref{th:inverse_statement_generalized} is sharp for $\Omega \subset \R^d$ with Lipschitz boundary (up to an arbitrary small $\varepsilon > 0$):

For $f \in H^\beta(\Omega)$ with $d/2 < \beta \leq \tau$, 
the direct statement \Cref{th:error_estimate} provides an $\Vert \cdot \Vert_{L^2(\Omega)}$ error decay for asymptotically uniformly distributed points $X \subset \Omega$ according to $h_{X}^\beta$.
On the other hand given an approximation rate as $h_X^\beta$, the inverse statement \Cref{th:inverse_statement_generalized} already provides $f \in H^{\beta'}(\Omega)$ for any $\beta' < \beta$.
This establishes a one-to-one correspondence between smoothness and approximation rate for kernel based interpolation using finitely smooth kernels.
\section{Discussion} \label{sec:discussion_remarks}

We start by commenting on the relation between the main result \Cref{th:inverse_statement_generalized} and the related literature discussed in \Cref{subsec:related_work}:
The standard approach for showing inverse statements is with help of Bernstein inequalities.
As these are not available in the literature for the general case of compact Lipschitz domains because of challenges due to the boundary,
we employed the utility statements of \Cref{sec:utility_statements} instead.
These statements allowed to turn the $L^2(\Omega)$ convergence rates into point based estimates, 
which enabled the construction of a sequence of interpolants that turned out to be a Cauchy sequence.
As elaborated in the previous section, the resulting estimate can also be understood as a Bernstein inequality.
While it is evident that the utility statements of \Cref{sec:utility_statements} are generalizable to other $L^p(\Omega)$ norms, $1 \leq p < \infty$,
the interpolation inequality within the key step Eq.~\eqref{eq:application_hoelder} does not transfer in a straightforward way.
Thus it remains a future task to transfer the $L^2(\Omega)$ error analysis of the inverse statement from \Cref{th:inverse_statement_generalized} into $L^\infty(\Omega)$ or other $L^p(\Omega)$ error statements that are sharp as well. 

For the case of $\Omega$ being a compact Lipschitz region, 
the only inverse statement so far was given in \cite{schaback2002inverse} and was formulated in terms of $L^\infty(\Omega)$ convergence rates.
In contrast to \Cref{th:inverse_statement_generalized}, 
it covered only the edge case of smoothness $\tau$ (instead of intermediate smoothnesses $\beta \in (0, \tau]$) and did not yield a sharp inverse statement, thus leaving a ``gap of $d/2$''.
Such a gap is no longer existent in the present approach which considers $L^2(\Omega)$ convergence rates.
In fact, our inverse statement \Cref{th:inverse_statement_generalized} is exactly \cite[Theorem 6.2]{narcowich2007direct},
however for the case of a compact Lipschitz region $\Omega$ instead of the sphere $\mathbb{S}^{d-1}$. 

\Cref{th:inverse_statement_generalized} makes use of well distributed points due to considering the fill distance $h_{X}$.
This is a frequent assumption in kernel based interpolation, 
and also useful in applications since greedy algorithms as the $P$-greedy algorithm are known to provide well distributed points \cite[Theorem 19 \& 20]{wenzel2021novel}.
It remains open whether and how inverse statements can be generalized to adaptively distributed points, 
such as greedily selected points \cite{wenzel2023analysis, santin2024optimality}.

The key ingredients for the proof of \Cref{th:inverse_statement_generalized} have been the technical statement of \Cref{prop:disc_L2_from_cont_L2_new} as well as standard RKHS analysis tools including stability estimates as in \Cref{th:stability_statement}.
These tools do not seem to rely too much on the considered domain $\Omega \subset \R^d$, such that an extension to manifolds seems possible: 
In the case of manifolds, the asymptotic of fill and separation distance for well distributed points still coincide 
-- in view of Eq.~\eqref{eq:geometric_bounds} they will however depend on the effective dimension of the manifold instead of the dimension of the ambient space.
Also weakening the assumption of a Lipschitz boundary or cone condition might be possible, 
as \cite{wenzel2024stability} showed that the $L^\infty(\Omega)$ error estimates do not depend significantly on the shape of the region $\Omega \subset \R^d$ 
-- though it remains open whether this holds as well for the $L^2(\Omega)$ error.

We continue with a remark on the sharpness of the inverse statements, which allowed to conclude the one-to-one correspondence between smoothness and approximation rates as elaborated after \Cref{th:inverse_statement_generalized}:
We considered continuous kernels satisfying a norm-equivalence $\ns \asymp H^\tau(\Omega$ for some $\tau > d/2$, 
which allowed to make use of stability statements which were originally derived for RBF kernels, see \Cref{th:stability_statement}.
Such RBF kernels are usually characterized with help of the decay of their Fourier transform, i.e.\ Eq.~\eqref{eq:fourier_decay}:
The upper bound on the Fourier decay usually yields the error estimates (via the fill distance), i.e.\ the direct statements,
while the lower bound on the Fourier decay yields stability estimates (via the separation distance) which are required for the inverse statements \cite{schaback2002inverse, schaback1995error}.
In case of a gap between the lower and upper bound, i.e.
\begin{align*}
c_\Phi (1+\Vert \omega \Vert_2^2)^{-\tau_1} \leq \hat{\Phi}(\omega) \leq C_\Phi (1 + \Vert \omega \Vert_2^2)^{-\tau_2}
\end{align*}
for $d/2 < \tau_1 < \tau_2$,
this could result in a gap between direct and inverse statements.
Such different exponents, i.e.\ $\tau_1 < \tau_2$, can happen if the Fourier transform $\hat{\Phi}$ has a staircase behavior.
We remark that such a kernel is probably rather exotic and of little practical use.

In this paper we considered kernels of finite smoothness, i.e.\ $\ns \asymp H^\tau(\Omega)$.
However there are also kernels of infinite smoothness, such as the Gaussian kernel $k(x, z) = \exp(- \Vert x - z \Vert_2^2)$.
A one-to-one correspondence as discussed after \Cref{th:inverse_statement_generalized}) might not be derivable here:
For example for the Gaussian kernel, the known error estimates give rates as $\exp(c \log(h_{X}) / h_{X})$ \cite[Theorem 11.22]{wendland2005scattered}.
However the corresponding stability estimates look like $\exp(-40.71d^2/q_X^2) q_X^{-d}$ \cite[Corollary 12.4]{wendland2005scattered}, 
i.e.\ there is a large gap even for well distributed points, i.e.\ $q_X \asymp h_{X}$.
A detailed investigation on this possible \textit{analytic gap} between direct and inverse statements seems to be an interesting future research direction.
Another possible direction for future work is to improve the case $\beta > \tau$ within \Cref{th:inverse_statement_generalized}, 
as one would naturally expect to conclude increased smoothness here as well.
Within the presented analysis this was not possible, as \Cref{prop:hoelder_interpol} is no longer applicable for $\beta > \tau$.

Note that \Cref{th:inverse_statement_generalized} and thus the one-to-one correspondence also hold for large values of the dimension $d$,
as only the asymptotic behavior of the error for $h_{X} \rightarrow 0$ is of importance.
Though the dimensionality will affect the constants involved.
However when dealing with only finitely many scattered data points in high dimensions, 
the fill distance is usually larger than one.
We point to \cite{lin2024kernel} for more details on kernel interpolation of scattered data in high dimensions.

Note that \Cref{th:inverse_statement_generalized} also works for large values of the dimension $d$,
as long as Eq.~\eqref{eq:inverse_statement_assumption} is satisfied.
The dimensionality only affects the constants involved, however not the asymptotic behavior of the error for $h_{X, \Omega} \rightarrow 0$,
which is only linked to the Sobolev smoothness .
We point to \cite{lin2024kernel} for details on kernel interpolation in high-dimensional domains.

Finally, we conclude with a numerical example related to the one-to-one correspondence,
i.e.\ \Cref{th:error_estimate} and \Cref{th:inverse_statement_generalized}:

\begin{example} %
We consider the reconstruction of continuous functions on $\Omega := [0, 1]$ using the Matérn kernel $k(x, z) = \exp(-\Vert x - z \Vert_2)$,
 which is an RBF kernel with $\Phi(r) = \exp(-r)$ and corresponding Fourier transform $\hat{\Phi}(\omega) = \sqrt{\frac{2}{\pi}} \cdot \frac{1}{\omega^2 + 1}$.
Thus, the RKHS $\ns$ is norm equivalent to the Sobolev space $H^1(\Omega)$, i.e.\ $\tau = 1$.
We consider the functions
\begin{align*}
f_\sigma: \Omega \rightarrow \R, x \mapsto x^\sigma \cdot (1-x)^\sigma
\end{align*}
for $\sigma > 0$ and it can be shown that $f_\sigma \in H^{\sigma' + 1/2}(\Omega)$ ~ $\forall \sigma' \in (0, \sigma)$ for any $\sigma \in (0, 2) \setminus \{ 1 \}$.
For the numerical experiment, we make use of the values %
$\sigma \in \{ 0.21, 0.36, 0.51, 0.81, 1.11, 1.51 \}$ 
and consider the $\Vert f_\sigma - s_{f_\sigma, X} \Vert_{L^2(\Omega)}$ error using up to $2^{13} + 1 = 8193$ equidistant interpolation points $X \subset \Omega$,
where each both boundary points are included.
The $\Vert f_\sigma - s_{f_\sigma, X} \Vert_{L^2(\Omega)}$ error was evaluated by discretizing $\Omega$ using $2^{20} + 1$ equally spaced points.

The results are visualized in \Cref{fig:matern_L2_error}:
For every value of $\sigma$, the decay of the error $\Vert f_\sigma - s_{f_\sigma, X} \Vert_{L^2(\Omega)}$ in the number of interpolation points $n$ is depicted.
The legend additionally lists the numerically computed decay rates,
and the dashed black lines indicate the theoretically expected decay.
For $\sigma \in \{ 0.21, 0.36 \}$ we have $f_\sigma \in H^{\sigma' + 1/2} \supset H^\tau(\Omega)$ ~ $\forall \sigma' \in (0, \sigma)$, 
i.e.\ exactly the situation described in \Cref{th:error_estimate} and \Cref{th:inverse_statement_generalized}: 
We obtain a decay as $n^{-0.69}$ and $n^{-0.85}$, which almost exactly match the expected decay of $n^{-0.71}$ respectively $n^{-0.86}$.
For $\sigma \in \{ 0.51, 0.81, 1.11, 1.51 \}$ we have $\sigma + 1/2 > 1$ and thus $f_\sigma \in H^1(\Omega)$, 
i.e.\ additional smoothness and apparently an increased convergence rate. %
The dashed black lines indicate likewise a decay as $n^{-1.01}$, $n^{-1.31}$, $n^{-1.61}$ and $n^{-2.01}$, i.e.\ scaling according to the smoothness of $f_\sigma$.
This seems to be an instance of superconvergence, see e.g.\ \cite{schaback2018superconvergence}.

\begin{figure}[hbt!]
\centering
\setlength\fwidth{.65\textwidth}
\begin{tikzpicture}

\definecolor{crimson2143940}{RGB}{214,39,40}
\definecolor{darkgray176}{RGB}{176,176,176}
\definecolor{darkorange25512714}{RGB}{255,127,14}
\definecolor{forestgreen4416044}{RGB}{44,160,44}
\definecolor{lightgray204}{RGB}{204,204,204}
\definecolor{mediumpurple148103189}{RGB}{148,103,189}
\definecolor{sienna1408675}{RGB}{140,86,75}
\definecolor{steelblue31119180}{RGB}{31,119,180}

\begin{axis}[
legend cell align={left},
legend style={
  fill opacity=0.8,
  draw opacity=1,
  text opacity=1,
  at={(0.03,0.03)},
  anchor=south west,
  draw=lightgray204
},
log basis x={10},
log basis y={10},
tick align=outside,
tick pos=left,
x grid style={darkgray176},
xmin=3.45339580257623, xmax=11862.2371549303,
xmode=log,
xtick style={color=black},
xtick={0.1,1,10,100,1000,10000,100000,1000000},
xticklabels={
  \(\displaystyle {10^{-1}}\),
  \(\displaystyle {10^{0}}\),
  \(\displaystyle {10^{1}}\),
  \(\displaystyle {10^{2}}\),
  \(\displaystyle {10^{3}}\),
  \(\displaystyle {10^{4}}\),
  \(\displaystyle {10^{5}}\),
  \(\displaystyle {10^{6}}\)
},
y grid style={darkgray176},
ymin=5.78622928893346e-09, ymax=0.280003478947575,
ymode=log,
ytick style={color=black},
ytick={1e-10,1e-09,1e-08,1e-07,1e-06,1e-05,0.0001,0.001,0.01,0.1,1,10},
yticklabels={
  \(\displaystyle {10^{-10}}\),
  \(\displaystyle {10^{-9}}\),
  \(\displaystyle {10^{-8}}\),
  \(\displaystyle {10^{-7}}\),
  \(\displaystyle {10^{-6}}\),
  \(\displaystyle {10^{-5}}\),
  \(\displaystyle {10^{-4}}\),
  \(\displaystyle {10^{-3}}\),
  \(\displaystyle {10^{-2}}\),
  \(\displaystyle {10^{-1}}\),
  \(\displaystyle {10^{0}}\),
  \(\displaystyle {10^{1}}\)
}
]
\addplot [semithick, steelblue31119180, mark=x, mark size=5, mark options={solid}, only marks]
table {%
5 0.0466967065630209
9 0.0478759498328178
17 0.0366928179821262
33 0.024860053254987
65 0.0159639205368953
129 0.00999936475371457
257 0.00619026530859231
513 0.00381240517195436
1025 0.00234479220433814
2049 0.00144477068896539
4097 0.000895486726452085
8193 0.000562119428182609
};
\addlegendentry{$\sigma$ = 0.21, $n^{-0.69}$}
\addplot [semithick, darkorange25512714, mark=x, mark size=5, mark options={solid}, only marks]
table {%
5 0.0697358374924028
9 0.0569537027628708
17 0.0371838280793163
33 0.0221739431899068
65 0.0126942524694815
129 0.00712789011388194
257 0.00396513965147717
513 0.00219609361367931
1025 0.00121422393322513
2049 0.00067141472289871
4097 0.000372011851195291
8193 0.000207155566541475
};
\addlegendentry{$\sigma$ = 0.36, $n^{-0.85}$}
\addplot [semithick, forestgreen4416044, mark=x, mark size=5, mark options={solid}, only marks]
table {%
5 0.075541297598699
9 0.0532783496987298
17 0.0308363709378836
33 0.016457413027936
65 0.0084639073698255
129 0.00427644264895892
257 0.00214216475070379
513 0.00106863856079846
1025 0.00053215515675901
2049 0.000264915239379014
4097 0.000132009547594661
8193 6.5968050457666e-05
};
\addlegendentry{$\sigma$ = 0.51, $n^{-1.00}$}
\addplot [semithick, crimson2143940, mark=x, mark size=5, mark options={solid}, only marks]
table {%
5 0.0648008188362111
9 0.0366075789899073
17 0.0171349829740573
33 0.00741653916940206
65 0.00309616456120359
129 0.00127025922857185
257 0.000516734408817335
513 0.000209337643578268
1025 8.46457933686391e-05
2049 3.42056082368218e-05
4097 1.38275281985738e-05
8193 5.59816640782513e-06
};
\addlegendentry{$\sigma$ = 0.81, $n^{-1.31}$}
\addplot [semithick, mediumpurple148103189, mark=x, mark size=5, mark options={solid}, only marks]
table {%
5 0.047421540343049
9 0.0221105283577681
17 0.0084724650073319
33 0.00299030003759234
65 0.00101594650587007
129 0.00033887945146583
257 0.00011202360055803
513 3.68690778296606e-05
1025 1.2109266953291e-05
2049 3.97406564258091e-06
4097 1.30437114496383e-06
8193 4.28573685975758e-07
};
\addlegendentry{$\sigma$ = 1.11, $n^{-1.61}$}
\addplot [semithick, sienna1408675, mark=x, mark size=5, mark options={solid}, only marks]
table {%
5 0.0283674790211751
9 0.0104127842643092
17 0.00308245144902635
33 0.00083354641630121
65 0.000216154221300006
129 5.49354201340526e-05
257 1.38254619897806e-05
513 3.46284208466582e-06
1025 8.6537885824424e-07
2049 2.16063513561059e-07
4097 5.3941825847535e-08
8193 1.34765354532366e-08
};
\addlegendentry{$\sigma$ = 1.51, $n^{-2.00}$}
\addplot [semithick, black, dashed, forget plot]
table {%
5 0.111634733840691
9 0.073545458981321
17 0.046821929873124
33 0.0292363496419669
65 0.0180675623091796
129 0.0111057872618015
257 0.0068079369782053
513 0.00416758738826269
1025 0.00254949462044316
2049 0.00155909742364156
4097 0.000953272818856703
8193 0.000582805372242441
};
\addplot [semithick, black, dashed, forget plot]
table {%
5 0.125272516182551
9 0.0755651321550609
17 0.0437304485456777
33 0.0247199743646695
65 0.0137995370750179
129 0.00765354876153395
257 0.00423084892923338
513 0.0023349042213256
1025 0.00128750071857208
2049 0.000709649531044706
4097 0.000391065294134118
8193 0.000215481040096465
};
\addplot [semithick, black, dashed, forget plot]
table {%
5 0.118084133203615
9 0.0652178257152781
17 0.0343081926259354
33 0.017557075273129
65 0.00885337296697497
129 0.00443052960605404
257 0.00220860878768589
513 0.00109883557571443
1025 0.000546160322562205
2049 0.00027132753693993
4097 0.000134759883478144
8193 6.69227634405898e-05
};
\addplot [semithick, black, dashed, forget plot]
table {%
5 0.0971492117820942
9 0.0449813307451723
17 0.0195524822357925
33 0.00820043188796158
65 0.00337422073423558
129 0.00137473017837649
257 0.000557285310245906
513 0.000225338935663181
1025 9.10000693577902e-05
2049 3.67256948581593e-05
4097 1.48169758903464e-05
8193 5.97695128479253e-06
};
\addplot [semithick, black, dashed, forget plot]
table {%
5 0.0636908754939917
9 0.0247222779882753
17 0.00887965642656872
33 0.00305218941941043
65 0.00102477476292188
129 0.000339914344334921
257 0.000112053802093367
513 3.68239157183055e-05
1025 1.20824081527805e-05
2049 3.96128758364237e-06
4097 1.29822128026726e-06
8193 4.25378728130128e-07
};
\addplot [semithick, black, dashed, forget plot]
table {%
5 0.0373933088478114
9 0.011473506375836
17 0.00319537088182732
33 0.000842384924720835
65 0.00021565908509298
129 5.43798853456246e-05
257 1.36068634779611e-05
513 3.39146782627912e-06
1025 8.43662286884708e-07
2049 2.09664193991324e-07
4097 5.20795253861513e-08
8193 1.2933118773864e-08
};
\end{axis}

\end{tikzpicture}	
\caption{Visualization of the $L^2(\Omega)$ error decay in the ($y$-axis) for interpolation with the Matérn kernel $k(x, z) = \exp(-\Vert x - z \Vert)$ on $\Omega = [0,1]$ in the number of equidistant interpolation points ($x$-axis) for different $\sigma$ parameters of the functions $x^\sigma \cdot (1-x)^\sigma$.
This class of functions scales linearly in the Sobolev spaces $H^\tau(\Omega)$: 
It holds $x^\sigma \cdot (1-x)^\sigma \in H^{\sigma + 1/2}(\Omega)$ ~ $\forall \sigma' \in (0, \sigma)$.
The dashed black lines indicate the convergence rates $n^{-0.71}, n^{-0.86}, n^{-1.01}$, $n^{-1.31}$, $n^{-1.61}$ and $n^{-2.01}$,
while the legend lists numerically computed convergence rates.}
\label{fig:matern_L2_error}
\end{figure}

\end{example}

\section{Conclusion} \label{sec:conclusion}

The current paper proved sharp inverse statements for kernel interpolation using finitely smooth kernels for which the RKHS $\ns$ is norm-equivalent to a Sobolev space $H^\tau(\Omega)$, $\tau > d/2$.
This result includes in particular popular RBF kernels like the class of Matérn or Wendland kernels.
As a techincal tool to do so, discrete $L^2(X)$ error estimates based of continuous $L^2(\Omega)$ error estimates were derived.

As a result of the sharpness of the inverse statement, 
a one-to-one correspondence between smoothness (of the function to approximate) and the corresponding approximation rate was concluded. %
A numerical experiment illustrated the result.
The derived techniques and results are expected to yield further insights, e.g.\ for kernel quadrature, PDEs approximation with kernels or in statistical learning theory. \\
For future research, we aim at several directions:
First, investigations of other kernels such as analytic kernels as the Gaussian and other $L^p(\Omega)$, $1 \leq p \leq \infty$ norms seems meaningful.
Second, the results should be extended to the superconvergence case, i.e.\ the case where the considered function is smoother than the RKHS. \\

\textbf{Acknowledgements:}
The work was mainly conducted while the author was affiliated with the University of Stuttgart.
The author thanks Prof.\ Bernard Haasdonk and Prof.\ Gabriele Santin for support, discussions and feedback
and acknowledges funding of the project by the Deutsche Forschungsgemeinschaft (DFG, German Research Foundation) under Germany's Excellence Strategy - EXC 2075 - 390740016 and funding by the BMBF under contract 05M20VSA as well as support from the Studienstiftung des deutschen Volkes (German national Academic Foundation).
Part of the work was inspired and motivated by a research stay of the author with Prof.\ Lorenzo Rosasco at MaLGa, DIBRIS, Universit`a di Genova.

\IfFileExists{/home/wenzel/references.bib}{
\bibliography{/home/wenzel/references}			%
}{
\bibliography{/home/math/wenzel/references}			%

\begin{thebibliography}{10}

\bibitem{agranovich2015sobolev}
M.~S. Agranovich.
\newblock {\em Sobolev spaces, their generalizations and elliptic problems in
  smooth and Lipschitz domains}.
\newblock Springer, 2015.

\bibitem{arcangeli2007extension}
R.~Arcang{\'e}li, M.~C. L{\'o}pez~de Silanes, and J.~J. Torrens.
\newblock An extension of a bound for functions in {S}obolev spaces, with
  applications to ($m$, $s$)-spline interpolation and smoothing.
\newblock {\em Numerische Mathematik}, 107(2):181--211, 2007.

\bibitem{brezis2018gagliardo}
H.~Brezis and P.~Mironescu.
\newblock {G}agliardo--{N}irenberg inequalities and non-inequalities: the full
  story.
\newblock In {\em Annales de l'Institut Henri Poincar{\'e} C, Analyse non
  lin{\'e}aire}, volume~35, pages 1355--1376. Elsevier, 2018.

\bibitem{butzer1969fundamental}
P.~Butzer and K.~Scherer.
\newblock On the fundamental approximation theorems of {D.} {J}ackson, {SN}
  {B}ernstein and theorems of {M.} {Z}amansky and {SB} {S}te{\v{c}}kin.
\newblock {\em aequationes mathematicae}, 3:170--185, 1969.

\bibitem{cheung2018convergence}
K.~C. Cheung, L.~Ling, and R.~Schaback.
\newblock ${H}^2$-convergence of least-squares kernel collocation methods.
\newblock {\em SIAM Journal on Numerical Analysis}, 56(1):614--633, 2018.

\bibitem{marchi2005optimal}
S.~De~Marchi, R.~Schaback, and H.~Wendland.
\newblock Near-optimal data-independent point locations for radial basis
  function interpolation.
\newblock {\em Advances in Computational Mathematics}, 23(3):317--330, 2005.

\bibitem{devore1973saturation}
R.~DeVore and F.~Richards.
\newblock Saturation and inverse theorems for spline approximation.
\newblock In {\em Spline Functions and Approximation Theory: Proceedings of the
  Symposium held at the University of Alberta, Edmonton May 29 to June 1,
  1972}, pages 73--82. Springer, 1973.

\bibitem{duchon1978erreur}
J.~Duchon.
\newblock Sur l{\textquoteright}erreur d{\textquoteright}interpolation des
  fonctions de plusieurs variables par les ${D}^m$-splines.
\newblock {\em RAIRO. Analyse num\'erique}, 12(4):325--334, 1978.

\bibitem{fasshauer2007meshfree}
G.~E. Fasshauer.
\newblock {\em Meshfree {A}pproximation {M}ethods with {MATLAB}}, volume~6.
\newblock World Scientific, 2007.

\bibitem{fasshauer2015kernel}
G.~E. Fasshauer and M.~J. McCourt.
\newblock {\em Kernel-based {A}pproximation {M}ethods using {MATLAB}},
  volume~19.
\newblock World Scientific Publishing Company, 2015.

\bibitem{haas2023mind}
M.~Haas, D.~Holzm\"{u}ller, U.~Luxburg, and I.~Steinwart.
\newblock Mind the spikes: Benign overfitting of kernels and neural networks in
  fixed dimension.
\newblock In A.~Oh, T.~Naumann, A.~Globerson, K.~Saenko, M.~Hardt, and
  S.~Levine, editors, {\em Advances in Neural Information Processing Systems},
  volume~36, pages 20763--20826. Curran Associates, Inc., 2023.

\bibitem{hangelbroek2018inverse}
T.~Hangelbroek, F.~Narcowich, C.~Rieger, and J.~Ward.
\newblock An inverse theorem for compact {L}ipschitz regions in $\mathbb{R}^d$
  using localized kernel bases.
\newblock {\em Mathematics of Computation}, 87(312):1949--1989, 2018.

\bibitem{hangelbroek2018direct}
T.~Hangelbroek, F.~J. Narcowich, C.~Rieger, and J.~D. Ward.
\newblock Direct and inverse results on bounded domains for meshless methods
  via localized bases on manifolds.
\newblock {\em Contemporary Computational Mathematics-A Celebration of the 80th
  Birthday of Ian Sloan}, pages 517--543, 2018.

\bibitem{lin2024kernel}
S.-B. Lin, X.~Chang, and X.~Sun.
\newblock Kernel interpolation of high dimensional scattered data.
\newblock {\em SIAM Journal on Numerical Analysis}, 62(3):1098--1118, 2024.

\bibitem{mhaskar2010bernstein}
H.~Mhaskar, F.~Narcowich, J.~Prestin, and J.~Ward.
\newblock ${L}^p$ {B}ernstein estimates and approximation by spherical basis
  functions.
\newblock {\em Mathematics of Computation}, 79(271):1647--1679, 2010.

\bibitem{mirzaei2018petrov}
D.~Mirzaei.
\newblock A {P}etrov--{G}alerkin kernel approximation on the sphere.
\newblock {\em SIAM Journal on Numerical Analysis}, 56(1):274--295, 2018.

\bibitem{mueller2009komplexitaet}
S.~M{\"u}ller.
\newblock {\em Komplexit{\"a}t und Stabilit{\"a}t von kernbasierten
  Rekonstruktionsmethoden (Complexity and Stability of Kernel-based
  Reconstructions)}.
\newblock PhD thesis, Fakult{\"a}t f{\"u}r Mathematik und Informatik,
  Georg-August-Universit{\"a}t G{\"o}ttingen, 2009.

\bibitem{narcowich2006sobolev}
F.~Narcowich, J.~Ward, and H.~Wendland.
\newblock {S}obolev {E}rror {E}stimates and a {B}ernstein {I}nequality for
  {S}cattered {D}ata {I}nterpolation via {R}adial {B}asis {F}unctions.
\newblock {\em Constructive Approximation}, 24(2):175--186, 2006.

\bibitem{narcowich2007direct}
F.~J. Narcowich, X.~Sun, J.~D. Ward, and H.~Wendland.
\newblock {D}irect and {I}nverse {S}obolev {E}rror {E}stimates for {S}cattered
  {D}ata {I}nterpolation via {S}pherical {B}asis {F}unctions.
\newblock {\em Foundations of Computational Mathematics}, 7:369--390, 2007.

\bibitem{rieger2008sampling}
C.~Rieger.
\newblock {\em Sampling inequalities and applications}.
\newblock PhD thesis, Universität Göttingen, 2008.

\bibitem{rieger2010sampling}
C.~Rieger and B.~Zwicknagl.
\newblock Sampling inequalities for infinitely smooth functions, with
  applications to interpolation and machine learning.
\newblock {\em Advances in Computational Mathematics}, 32(1):103, 2010.

\bibitem{rychkov1999restrictions}
V.~S. Rychkov.
\newblock On restrictions and extensions of the {B}esov and
  {T}riebel--{L}izorkin spaces with respect to {L}ipschitz domains.
\newblock {\em Journal of the London Mathematical Society}, 60(1):237--257,
  1999.

\bibitem{santin2024optimality}
G.~Santin, T.~Wenzel, and B.~Haasdonk.
\newblock On the optimality of target-data-dependent kernel greedy
  interpolation in {S}obolev {R}eproducing {K}ernel {H}ilbert {S}paces.
\newblock {\em SIAM Journal on Numerical Analysis}, 62(5):2249--2275, 2024.

\bibitem{schaback1995error}
R.~Schaback.
\newblock Error estimates and condition numbers for radial basis function
  interpolation.
\newblock {\em Advances in Computational Mathematics}, 3(3):251--264, 1995.

\bibitem{schaback2018superconvergence}
R.~Schaback.
\newblock Superconvergence of kernel-based interpolation.
\newblock {\em Journal of Approximation Theory}, 235:1 -- 19, 2018.

\bibitem{schaback2002inverse}
R.~Schaback and H.~Wendland.
\newblock Inverse and saturation theorems for radial basis function
  interpolation.
\newblock {\em Mathematics of Computation}, 71(238):669--681, 2002.

\bibitem{steinwart2008support}
I.~Steinwart and A.~Christmann.
\newblock {\em Support vector machines}.
\newblock Springer Science \& Business Media, 2008.

\bibitem{temlyakov2018marcinkiewicz}
V.~Temlyakov.
\newblock The {M}arcinkiewicz-type discretization theorems.
\newblock {\em Constructive Approximation}, 48:337--369, 2018.

\bibitem{triebel2002function}
H.~Triebel.
\newblock {F}unction spaces in {L}ipschitz domains and on {L}ipschitz
  manifolds. {C}haracteristic functions as pointwise multipliers.
\newblock {\em Revista Matem{\'a}tica Complutense}, 15(2):475--524, 2002.

\bibitem{ward2012lp}
J.~P. Ward.
\newblock ${L}^p$ {B}ernstein inequalities and inverse theorems for {RBF}
  approximation on $\mathbb{R}^d$.
\newblock {\em Journal of Approximation Theory}, 164(12):1577--1593, 2012.

\bibitem{wendland2005scattered}
H.~Wendland.
\newblock {\em Scattered {D}ata {A}pproximation}, volume~17 of {\em Cambridge
  Monographs on Applied and Computational Mathematics}.
\newblock Cambridge University Press, Cambridge, 2005.

\bibitem{wendland2005approximate}
H.~Wendland and C.~Rieger.
\newblock Approximate interpolation with applications to selecting smoothing
  parameters.
\newblock {\em Numerische Mathematik}, 101(4):729--748, 2005.

\bibitem{wenzel2021novel}
T.~Wenzel, G.~Santin, and B.~Haasdonk.
\newblock A novel class of stabilized greedy kernel approximation algorithms:
  Convergence, stability and uniform point distribution.
\newblock {\em Journal of Approximation Theory}, 262:105508, 2021.

\bibitem{wenzel2023analysis}
T.~Wenzel, G.~Santin, and B.~Haasdonk.
\newblock Analysis of target data-dependent greedy kernel algorithms:
  Convergence rates for $f$-, $f${\textperiodcentered} {$P$}-and
  $f$/{$P$}-greedy.
\newblock {\em Constructive Approximation}, 57(1):45--74, 2023.

\bibitem{wenzel2024stability}
T.~Wenzel, G.~Santin, and B.~Haasdonk.
\newblock Stability of convergence rates: kernel interpolation on
  non-{L}ipschitz domains.
\newblock {\em IMA Journal of Numerical Analysis}, 44(3), 2024.

\end{thebibliography}
}
\bibliographystyle{abbrv}

\appendix 

\section{Further proofs} \label{sec:proofs}

\subsection{Proof of \Cref{th:utility_theorem_new}}

The proof of \Cref{th:utility_theorem_new} is constructive, but a bit technical.
For enhanced readability, the most important steps of the proof are visualized in \Cref{fig:vis_proof}.

\begin{figure}[t]
\setlength\fwidth{.7\textwidth}
\input{Figures/vis_proof_1.tex}
\input{Figures/vis_proof_2.tex}		%

\input{Figures/vis_proof_3.tex}
\input{Figures/vis_proof_4.tex}
\caption{Visualization of the key steps of the proof of \Cref{th:inverse_statement_generalized}.
For the sake of a helpful visualization, $q=0.2$ was chosen, despite it does not match the assumptions. \\
\textbf{Top left}: Initial set $Y_N$ (gray), reference set $Y_0$ (red) with circles of radius $\frac{1}{2} q = 0.1$ and set $Y_1'$ (purple, see Eq.~\eqref{eq:definition_y'}). 
\textbf{Top right}: Set $Y_1''$ (purple, see Eq.~\eqref{eq:definition_y''}) and clusters of points refering to the sets $T_i^{(N)}$ (gray, see Eq.~\eqref{eq:proof:definition_TiN}).
\textbf{Bottom left}: Restriction of the sets $T_i^{(N)}$ to $\tilde{T}_i^{(N)}$, see Eq.~\eqref{eq:proof:definition_TiN_tilde}. 
\textbf{Bottom right}: Additional visualization of $Y_1$ (red crosses, see Eq.~\eqref{eq:set_Y1}).}
\label{fig:vis_proof}
\end{figure}

\begin{proof}[Proof of \Cref{th:utility_theorem_new}]
As a first step, we approximate the integral $\int_\Omega |g(y)|^2 ~ \mathrm{d}y = \Vert g \Vert_{L^2(\Omega)}$ with a quadrature:
Let $(y_j)_{j \in \N}$ be the sequence of geometric greedy points in $\Omega \subset \R^d$, $Y_N := \{y_1, ..., y_N\}$. 
Let $N \in \N$:
\begin{itemize}
\item We cover $\Omega$ with patches $\{ E_i^{(N)} \}_{i=1}^N$, 
where $E_j^{(N)} \subset \Omega$ consists of all the points that are closer to $y_j$ than to any other $y_i, i=1, ..., N, i \neq j$:
\begin{align*}
E_j^{(N)} := \{y \in \Omega ~:~ \Vert y - y_j \Vert_2 \leq \Vert y - y_i \Vert_2 \quad \forall i \neq j \}.
\end{align*}
Like this it follows that $\bigcup_{i=1}^N E_i^{(N)} = \Omega$ and $\mu(E_i^{(N)} \cap E_j^{(N)}) = 0$ for all $i \neq j$.
Furthermore all the patches $E_i^{(N)}, i=1, ..., N$ are asymptotically of the same size:
On the one hand, using that for any $i=1, ..., N$, 
the patch $E_i^{(N)}$ is included in a ball of radius $h_{Y_N, \Omega}$ around $y_i$, it holds
\begin{align}
\label{eq:patch_size_upper_bound}
\mu(E_i^{(N)}) \leq C_d h_{Y_N, \Omega}^d \stackrel{\text{Eq.\ } \eqref{eq:geometric_uniformity}}{\leq} C_d 2^d q_{Y_N}^d \stackrel{\text{Eq.\ } \eqref{eq:geometric_bounds}}{\leq} 2^d C_d C_\Omega^d N^{-1}.
\end{align}
On the other hand, for any $i=1, ..., N$, 
the cone $C(y_i, \xi(y_i), \alpha, q_{Y_N})$ associated to $y_i$ is contained in $E_i^{(N)}$, thus
\begin{align}
\label{eq:patch_size_lower_bound}
\mu(E_i^{(N)}) \geq C_{d, \alpha} q_{Y_N}^d \stackrel{\text{Eq.\ } \eqref{eq:geometric_uniformity}}{\geq} 
C_{d, \alpha} 2^{-d} h_{Y_N, \Omega}^d 
\stackrel{\text{Eq.\ } \eqref{eq:geometric_bounds}}{\geq}  2^{-d} c_\Omega^{d} C_{d, \alpha} N^{-1}.
\end{align}
\item For $N \rightarrow \infty$ it holds $\max_{i=1, ..., N} \sup_{a, b \in \overline{E_i^{(N)}}}( \Vert a - b \Vert_2) \rightarrow 0$, 
therefore for any integrable function $g \in \mathcal{C}(\Omega)$ ($g \in \mathcal{C}(\Omega)$ bounded provides integrability) the definition of the Rieman integral yields
\begin{align*}
\int_{\Omega} |g(y)|^2 ~ \mathrm{d}y = \lim_{N \rightarrow \infty} \sum_{i=1}^N |g(y_i)|^2 \cdot \mu(E_i^{(N)}).
\end{align*}
In particular we have
\begin{alignat*}{2}
&\forall \varepsilon > 0 ~ \exists N_\varepsilon \in \N ~ \forall N \geq N_\epsilon && \quad \left| \int_{\Omega} |g(y)|^2 ~ \mathrm{d}y - \sum_{i=1}^N |g(y_i)|^2 \cdot \mu(E_i^{(n)}) \right| < \varepsilon \\
\Rightarrow \quad &\exists \tilde{N} \in \N ~ \forall N \geq \tilde{N}&& \quad \sum_{i=1}^N |g(y_i)|^2 \cdot \mu(E_i^{(n)}) < 2 \cdot \int_{\Omega} |g(y)|^2 ~ \mathrm{d}y.
\end{alignat*}
Using Eq.\ \eqref{eq:patch_size_lower_bound} we obtain finally
\begin{align}
\label{eq:desired_estimate}
\forall N > \tilde{N}& \quad \frac{c_\Omega^{d} C_{d, \alpha}}{2^{d}} \cdot \frac{1}{N} \cdot \sum_{i=1}^N |g(y_i)|^2 < 2 \cdot \int_{\Omega} |g(y)|^2 ~ \mathrm{d}y \notag \\
\Leftrightarrow \forall N > \tilde{N}& \quad \frac{1}{N} \cdot \sum_{i=1}^N |g(y_i)|^2 < \frac{2^{d+1}}{c_\Omega^{d} C_{d, \alpha}} \cdot \int_{\Omega} |g(y)|^2 ~ \mathrm{d}y.
\end{align}
\end{itemize}

The set $Y_N$ of points can be quite huge as $N$ is not further specified.
Thus, as a next step we want to construct smaller sets of points that still satisfy an inequality as in Eq.\ \eqref{eq:desired_estimate}.
This is done by selecting a suitable subset of $Y_N$ (namely the set $Y_1$, see Eq.~\eqref{eq:set_Y1}): \\
For $q \leq \min \left( \frac{2^{-1/d} c_\Omega}{7 C_\Omega} \cdot q_{Y_0}, \frac{2}{5}r \right) \leq \frac{c_\Omega}{7 C_\Omega} \cdot q_{Y_0} < \frac{1}{7} \cdot q_{Y_0}$, we define 
\begin{align}
\label{eq:definition_y'}
Y'_1 := \{ y_1, y_2, ..., y_{n^*}\}
\end{align}
with $n^* \geq 2$ the smallest integer such that $h_{Y_1' \cup \{ y_{n* + 1} \}, \Omega} < q$.
From this definition we conclude
\begin{align}
\label{eq:proof:geom_props_1}
\frac{1}{2} q \leq \frac{1}{2} h_{Y'_1, \Omega} \leq h_{Y'_1 \cup \{ y_{n^*+1} \}, \Omega} < q
\end{align}
where the middle inequality is because of Eq.~\eqref{eq:geometric_uniformity}.
For the separation distance $q_{Y_1'}$ we have due to Eq.~\eqref{eq:geometric_uniformity}
\begin{align}
\label{eq:proof:geom_props_2}
\frac{1}{2} q \leq \frac{1}{2} h_{Y_1', \Omega} \leq q_{Y_1'} \leq h_{Y_1', \Omega} < 2q.
\end{align}
With this, we define $Y''_1 \subset Y'_1$ as the subset of all the points which are not too close to $Y_0$, i.e.\
\begin{align}
\label{eq:definition_y''}
Y''_1 := Y'_1 \cap \left( \Omega \setminus \bigcup_{y \in Y_0} B_{\frac{1}{2} q}(y) \right),
\end{align}
where $B_r(y) := \{ z \in \R^d ~ | ~ \Vert z - y \Vert_2 < r \}$, i.e.\ open balls. 

The set $Y''_1$ is non-empty and well distributed:
\begin{itemize}
\item The set $Y_1''$ consists of at least two points:
Let $y \neq y' \in Y_0$ be the closest neighboring points of $Y_0$, i.e.\ $\Vert y - y' \Vert_2 = 2q_{Y_0} > 14q$, 
and thus $\mathrm{dist}(B_{\frac{1}{2}q}(y), B_{\frac{1}{2}q}(y')) > 13q$.
On the other hand, $h_{Y_1', \Omega} < 2q$, i.e.\ there must be at least two points in $Y_1''$ (namely ``in between'' $y$ and $y'$ as $\Omega$ is connected).
\item For the separation distance $q_{Y''_1}$ we simply have by Eq.~\eqref{eq:proof:geom_props_2}
\begin{align*}
q_{Y''_1} \geq q_{Y'_1} \geq \frac{1}{2} q.
\end{align*}
\item For the fill distance $h_{Y''_1, \Omega}$ we calculate:
\begin{align*}
h_{Y''_1, \Omega} \equiv& \sup_{y \in \Omega} \min_{y'' \in Y''_1} \Vert y - y'' \Vert_2 \\
=& \sup_{y \in \Omega} \min_{y'' \in Y''_1} \min_{y' \in Y'_1} \Vert y - y' + y' - y'' \Vert_2 \\
\leq& \sup_{y \in \Omega} \min_{y'' \in Y''_1} \min_{y' \in Y'_1} \Vert y - y' \Vert_2 + \Vert y' - y'' \Vert_2.
\end{align*}
Now pick $\tilde{y}' \in Y'_1$ as the closest $y'$ to $y$, i.e.\ $\tilde{y}' := \tilde{y}'(y) := \argmin_{y' \in Y'_1} \Vert y - y' \Vert_2$.
Then we continue to estimate:
\begin{align*}
h_{Y''_1, \Omega} \leq& \sup_{y \in \Omega} \left( \min_{y'' \in Y''_1} \Vert y - \tilde{y}'(y) \Vert_2 + \Vert \tilde{y}'(y) - y'' \Vert_2 \right) \\
\leq& \sup_{y \in \Omega} \left( \Vert y - \tilde{y}'(y) \Vert_2 + \min_{y'' \in Y''_1}  \Vert \tilde{y}'(y) - y'' \Vert_2 \right) \\
\leq& \sup_{y \in \Omega} \Vert y - \tilde{y}'(y) \Vert_2 + \sup_{y \in \Omega} \min_{y'' \in Y''_1}  \Vert \tilde{y}'(y) - y'' \Vert_2 \\
\leq& h_{Y'_1, \Omega} + \sup_{y \in \Omega} \min_{y'' \in Y''_1}  \Vert \tilde{y}'(y) - y'' \Vert_2.
\end{align*}
If $\tilde{y}'(y) \in Y_1''$, then the second summand is obviously zero.
If $\tilde{y}'(y) \in Y_1' \setminus Y_1''$, then by Eq.\ \eqref{eq:definition_y''} there is a point $y_0(y) \in Y_0$ such that $\Vert \tilde{y}'(y) - y_0(y) \Vert_2 < \frac{1}{2} q$. 
Then we choose any point $\tilde{y} := \tilde{y}(y) \in \Omega$ with $\Vert y_0(y) - \tilde{y}(y) \Vert_2 = \frac{5}{2} q$.
Such a point exists as $\frac{5}{2} q \leq r$, i.e. the distance is smaller than the radius of the cone condition of $\Omega$.
Adding $0 = - y_0(y) + y_0(y) - \tilde{y}(y) + \tilde{y}(y)$ gives
\begin{align*}
\sup_{y \in \Omega} \min_{y'' \in Y''_1}  \Vert \tilde{y}'(y) - y'' \Vert_2 
&= \sup_{y \in \Omega} \min_{y'' \in Y''_1}  \Vert \tilde{y}'(y) - y_0(y) + y_0(y) - \tilde{y}(y) + \tilde{y}(y) - y'' \Vert_2 \\
&\leq \sup_{y \in \Omega} \min_{y'' \in Y''_1}  \Vert \tilde{y}'(y) - y_0(y) \Vert_2 + \Vert y_0(y) - \tilde{y}(y) \Vert_2+ \Vert \tilde{y}(y) - y'' \Vert_2 \\
&\leq \sup_{y \in \Omega} \Vert \tilde{y}'(y) - y_0(y) \Vert_2 + \sup_{y \in \Omega} \Vert y_0(y) - \tilde{y}(y) \Vert_2 + \sup_{y \in \Omega} \min_{y'' \in Y''_1} \Vert \tilde{y}(y) - y'' \Vert_2 \\
&\leq \frac{1}{2} q + \frac{5}{2} q + 2q = 5q.
\end{align*}
In the last step we used for the last term that the closest point $y^*(y) \in Y'_1$ to $\tilde{y}(y)$ has a distance of at most $2q$ (due to $h_{Y'_1, \Omega} < 2q$) and that this point must actually belong to $Y_1''$ 
(because $\Vert y_0(y) - y^*(y) \Vert_2 = \Vert y_0(y) - \tilde{y}(y) + \tilde{y}(y) - y^*(y) \Vert_2 \geq \left| \Vert y_0(y) - \tilde{y}(y) \Vert - \Vert \tilde{y}(y) - y^*(y) \right| > |\frac{5}{2} q - 2q | = \frac{1}{2} q$). \\
Thus, finally we obtain
\begin{align*}
h_{Y_1'', \Omega} \leq 5q + 2q = 7q.
\end{align*}
\end{itemize}
For ease of notation, we set $Y_1'' =: \{y''_1, ..., y''_{|Y_1''|} \}$.
Now we create sets $T_i^{(N)}$ by assigning nearby points from $Y_N$ (for $N$ large enough) to the points from $Y_1''$:
\begin{align}
\label{eq:proof:definition_TiN}
T_i^{(N)} :=& \{ z \in Y_N ~ | ~ \Vert y''_i - z \Vert_2 \leq \frac{1}{3} q \}, \qquad &&i=1, ..., |Y_1''|.
\end{align}
The distance $\frac{1}{3}q$ was chosen such that $T_i^{(N)} \cap T_j^{(N)} = \emptyset$ for any pairwise distinct $i, j = 1, ..., |Y_1''|$. 
For convenience we introduce
\begin{align*}
T_i^{(N)} =: \left\{ y_1^{(i, N)}, ..., y_{|T_i^{(N)}|}^{(i, N)} \right\}.
\end{align*}
Now we prove that for $N \rightarrow \infty$, all the sets $T_i^{(N)}$, $i=1, ..., |Y_1''|$ are of asymptotically equal size:
\begin{itemize}
\item The number of points in $T_i^{(N)}$ must be large enough, to at least allow to cover the surrounding of $y_i'' \in Y_1''$ in terms of the volume:
Due to the assumption on the cone condition, 
the surrounding of $y_i''$ consist at least of the associated cone $C(y_i'', \xi(y_i''), \alpha, \frac{1}{3} q)$:
\begin{align*}
\bigcup_{y \in T_i^{(N)}} B_{h_{Y_N, \Omega}}(y) &\supset C(y_i'', \xi(y_i''), \alpha, \frac{1}{3}q) \\
\Rightarrow \quad \left|T_i^{(N)} \right| \cdot C_d h_{Y_N, \Omega}^{d} &\geq \mu(C(y_i'', \xi(y_i''), \alpha, \frac{1}{3}q)) = 3^{-d} C_{d, \alpha} \cdot q^d.
\end{align*}
Rearranging and using the geometric bounds from Eq.\ \eqref{eq:geometric_bounds} and Eq.\ \eqref{eq:geometric_uniformity} gives
\begin{align}
\label{eq:subset_size_lower_bound}
\left| T_i^{(N)} \right| &\geq \frac{3^{-d} C_{d, \alpha}}{C_d} \cdot \frac{q^d}{h_{Y_N, \Omega}^d}
\geq \frac{3^{-d} C_{d, \alpha}}{C_d} \cdot \frac{q^d}{2^d C_\Omega^d N^{-1}} \notag \\
&= \frac{C_{d, \alpha} q^d}{6^d C_d C_\Omega^d} \cdot N.
\end{align}
\item Using again a volume comparison argument, we have (for $N > n$ large enough such that $q_{Y_N} \leq \frac{2}{3} q$):
\begin{align*}
\bigcup_{y \in T_i^{(N)}} B_{q_{Y_N}}(y) &\subset B_{\frac{1}{3}q + q_{Y_N}}(y''_i) \\
\Rightarrow \quad \left| T_i^{(N)} \right| \cdot C_d q_{Y_N}^d &\leq C_d \cdot \left( \frac{1}{3}q + q_{Y_N} \right)^d \leq C_d q^d.
\end{align*}
Using the geometric bounds from Eq.\ \eqref{eq:geometric_bounds} and the asymptotical uniform distribution of Eq.\ \eqref{eq:geometric_uniformity} we arrive at
\begin{align}
\label{eq:subset_size_upper_bound}
\left| T_i^{(N)} \right| &\leq q^d \cdot q_{Y_N}^{-d} \leq q^d 2^d c_\Omega^d N.
\end{align}
\item Combining Eq.\ \eqref{eq:subset_size_lower_bound} and Eq.\ \eqref{eq:subset_size_upper_bound} formalizes our claim that the sets $T_j^{(N)}$ are of asymptotical equal size, 
because the lower and upper bound do actually not depend on $i = 1, ..., |Y_1''|$:
\begin{align}
\label{eq:subset_size_asymp}
\frac{C_{d, \alpha}}{6^d C_d C_\Omega^d} \cdot q^d N \leq 
\left| T_j^{(N)} \right| 
\leq 2^d c_\Omega^d \cdot q^d N.
\end{align}
\end{itemize}
Now we define $\tilde{T}_i^{(N)} \subset T_i^{(N)}$ for $i=1, ..., |Y_1''|$ be restricting the sets to the same amount of points, $m := \min_{i=1, ..., |Y_1''|} |T_i^{(N)}|$:
\begin{align}
\label{eq:proof:definition_TiN_tilde}
\tilde{T}_i^{(N)} := \left\{ y_1^{(i, N)}, ..., y_m^{(i, N)} \right\}.
\end{align}
We remark that the sets $\tilde{T}_i^{(N)}$ depend on the ordering of the points $\{ y_1^{(i, N)}, ..., y_{|T_i^{(N)}|}^{(i, N)} \}$, however it does not matter for the following which one is chosen. \\
Now we want to show that the number of points contained in 
\begin{align*}
\tilde{T}^{(N)} := \bigcup_{i=1}^{|Y_1''|} \tilde{T}_i^{(N)}
\end{align*}
is comparable to the number of points in the set $Y_N$ (which consists of $N$ points):
\begin{itemize}
\item For this we need a lower bound on $|Y_1''|$.
From Eq.\ \eqref{eq:proof:geom_props_2} we have $q_{Y_1'} \geq \frac{1}{2}q$, i.e.\ in every (open!) ball $B_{\frac{1}{2} q}(y)$ for $y \in Y_0$, there can be at most one point from $Y_1'$.
Therefore, we obtain $|Y_1''| \geq |Y_1'| - |Y_0|$.
Further using the geometric bound Eq.~\eqref{eq:geometric_bounds} and Eq.~\eqref{eq:proof:geom_props_1}, we obtain
\begin{align*}
|Y_1'| \geq c_\Omega^{d} h_{Y_1', \Omega}^{-d} \geq c_\Omega^d (2q)^{-d} = 2^{-d} c_\Omega^d q^{-d} 
\end{align*}
as well as
\begin{align*}
|Y_0| \leq C_\Omega^d q_{Y_0}^{-d} \leq C_\Omega^d \frac{1}{2} \frac{c_\Omega^d}{7^d C_\Omega^d} q^{-d} 
= \frac{1}{2} \frac{c_\Omega^d}{7^d} q^{-d}.
\end{align*}
Thus we have
\begin{align}
\label{eq:lower_bound_Y1''}
|Y_1''| \geq |Y_1'| - |Y_0| \geq& ~ \frac{c_\Omega^d}{2^d} q^{-d} - \frac{1}{2} \frac{c_\Omega^d}{7^d} q^{-d} \notag \\
>& \left( \frac{1}{2} \frac{c_\Omega^d}{2^d} \right) q^{-d}.
\end{align}
Now we can finally estimate the ratio of points contained in $\tilde{T}^{(N)}$ by putting together Eq.~\eqref{eq:subset_size_lower_bound} and \eqref{eq:lower_bound_Y1''}:
\begin{align}
\label{eq:subset_size_ratio}
\left| \tilde{T}^{(N)} \right| = \left| \bigcup_{i=1}^{|Y_1''|} \tilde{T}_i^{(N)} \right| \geq& ~ |Y_1''| \cdot \min_{i=1, ..., |Y_1''|} | \tilde{T}_i^{(N)} | \notag \\
\geq& \left( \frac{1}{2} \frac{c_\Omega^d}{2^d} \right) q^{-d} \cdot \frac{C_{d, \alpha}}{6^d C_d C_\Omega^d} \cdot q^d N \notag \\
=& \left( \frac{1}{2} \cdot \frac{c_\Omega^d C_{d, \alpha}}{12^d C_d C_\Omega^d} \right)  N, \notag \\
\Leftrightarrow \qquad \frac{N}{\left| \tilde{T}^{(N)} \right|} \leq& ~ 2 \cdot \frac{12^d C_d C_\Omega^d}{c_\Omega^d C_{d, \alpha}}.
\end{align}
\end{itemize}
Now we are ready to derive our main statement:
Recalling Eq.~\eqref{eq:proof:definition_TiN_tilde},
we consider the sum $\sum_{y \in \tilde{T}^{(N)}} |g(y)|$ (consisting of $|\tilde{T}^{(N)}| = |Y_1''| \cdot \min_{i=1, ..., |Y_1''|} |T_i^{(N)}|$ summands)
and split it into $m \equiv \min_{i=1, ..., |Y_1''|} |T_i^{(N)}|$ summands, such that every summand uses exactly one point from every $\tilde{T}_i^{(N)}$:
\begin{align*}
\frac{1}{|\tilde{T}^{(N)}|} \sum_{y \in \tilde{T}^{(N)}} |g(y)|^2
= \frac{1}{m} \sum_{\ell = 1}^m \left( \frac{1}{|Y_1''|} \sum_{i=1}^{|Y_1''|} |g(y_{\ell}^{(i, N)})|^2 \right).
\end{align*}
Now we have
\begin{align*}
\frac{1}{m} \sum_{\ell = 1}^m \left( \frac{1}{|Y_1''|} \sum_{i=1}^{|Y_1''|} |g(y_{\ell}^{(i, N)})|^2 \right) 
&~~~=~~~ \frac{1}{|\tilde{T}^{(N)}|} \sum_{y \in \tilde{T}^{(N)}} |g(y)|^2 \\
&~~~\leq~~~ \frac{N}{|\tilde{T}^{(N)}|} \cdot \frac{1}{N} \cdot \sum_{y \in Y_N} |g(y)|^2 \\
&\stackrel{\text{Eq.\ }\eqref{eq:desired_estimate}}{\leq} \frac{N}{|\tilde{T}^{(N)}|} \cdot \frac{2^{d+1}}{c_\Omega^{d} C_{d, \alpha}} \cdot \int_{\Omega} |g(y)|^2 ~ \mathrm{d}y \\ 
&\stackrel{\text{Eq.\ }\eqref{eq:subset_size_ratio}}{\leq} 2 \cdot \frac{12^d C_d C_\Omega^d}{c_\Omega^d C_{d, \alpha}} \cdot \frac{2^{d+1}}{c_\Omega^{d} C_{d, \alpha}} \cdot \int_{\Omega} |g(y)|^2 ~ \mathrm{d}y \\
&~~~=:~~~ \tilde{C}_{d, \alpha} \cdot \int_{\Omega} |g(y)|^2 ~ \mathrm{d}y.
\end{align*}
From here we can conclude that there exists at least one $\ell^* \in \{ 1, ..., m \}$ such that
\begin{align}
\label{eq:desired_estimate_2}
\frac{1}{|Y_1''|} \sum_{i=1}^{|Y_1''|} |g(y_{\ell^*}^{(i, N)})|^2 \leq \tilde{C}_{d, \alpha} \cdot \int_{\Omega} |g(y)|^2 ~ \mathrm{d}y,
\end{align}
which is exactly the inequality Eq~\eqref{eq:utility_theorem_new} to be proven.
Finally we are left with analyzing the geometric properties of this set 
\begin{align}
\label{eq:set_Y1}
Y_1 := \{ y_{\ell^*}^{(i,N)} \}_{i=1}^{|Y_1''|} = \{ y_{\ell^*}^{(i,N)} \}_{i=1}^{|Y_1|}.
\end{align}

\begin{itemize}
\item For the fill distance $h_{Y_1, \Omega}$ we calculate
\begin{align}
\label{eq:proof:subset_upper_bound_fill_dist}
h_{Y_1, \Omega} &\equiv \sup_{y \in \Omega} \min_{i=1, ..., |Y_1|} \Vert y - y_{\ell^*}^{(i, N)} \Vert_2 \notag \\
&\leq \sup_{y \in \Omega} \min_{i=1, ..., |Y_1|} \Vert y - \argmin_{y_j'' \in Y_1''}(\Vert y - y_j'' \Vert_2) + \argmin_{y_j'' \in Y_1''}(\Vert y - y_j'' \Vert_2) - y_{\ell^*}^{(i,N)} \Vert_2 \notag \\
&\leq \sup_{y \in \Omega} \min_{i=1, ..., |Y_1|} \Vert y - \argmin_{y_j'' \in Y_1''}(\Vert y - y_j'' \Vert_2) \Vert_2 + \Vert \argmin_{y_j'' \in Y_n}(\Vert y - y_j'' \Vert_2) - y_{\ell^*}^{(i,N)} \Vert_2 \notag \\
&\leq \sup_{y \in \Omega} \Vert y - \argmin_{y_j'' \in Y_1''}(\Vert y - y_j'' \Vert_2) \Vert_2 + \sup_{y \in \Omega} \min_{i=1, ..., |Y_1''|} \Vert \argmin_{y_j'' \in Y_1''}(\Vert y - y_j'' \Vert_2) - y_{\ell^*}^{(i,N)} \Vert_2 \notag \\
&\leq h_{Y_1'', \Omega} + \frac{1}{3} q 
\leq 7q + \frac{1}{3}q = \frac{22}{3}q,
\end{align}
where the definition of the sets $T_i^{(n, N)}$ from Eq.\ \eqref{eq:proof:definition_TiN} was used for the final step.
\item For the separation distance $q_{Y_1}$ it holds
\begin{align*}
q_{Y_1} &\equiv \min_{\substack{i \neq j \\ i, j = 1, ..., |Y_1|}} \Vert y_{\ell^*}^{(i,N)} - y_{\ell^*}^{(j,N)} \Vert_2 \\
&= \min_{\substack{i \neq j \\ i, j = 1, ..., |Y_1|}} \left \Vert y_{\ell^*}^{(i,N)} - y''_i + y''_i - y''_j + y''_j - y_{\ell^*}^{(j,N)} \right \Vert_2 \\
&\geq \min_{\substack{i \neq j \\ i, j = 1, ..., |Y_1|}} \left| \left \Vert y_i'' - y_j'' \right \Vert_2 - \left \Vert y_{\ell^*}^{(i,N)} - y''_i + y''_j - y_{\ell^*}^{(j,N)} \right \Vert_2 \right|.
\end{align*}
The first term can be lower bounded by $2 \cdot q_{Y_1''} \geq q$ due to the definition of the separation distance of $Y_1''$, 
while the latter term can be upper bounded (after applying the triangle inequality) by $\frac{1}{3} q + \frac{1}{3} q$ due to the definition of the sets $T_i^{(N)}$ in Eq.\ \eqref{eq:proof:definition_TiN}.
Thus we finally obtain
\begin{align}
\label{eq:proof:subset_lower_bound_sep_dist}
q_{Y_1} \geq 2q_{Y_1''} - \frac{2}{3} q \geq q - \frac{2}{3}q = \frac{1}{3}q.
\end{align}
\item Combining Eq.\ \eqref{eq:proof:subset_upper_bound_fill_dist} and Eq.\ \eqref{eq:proof:subset_lower_bound_sep_dist} gives
\begin{align*}
\frac{1}{3} q \leq q_{Y_1} \leq h_{Y_1, \Omega} \leq \frac{22}{3}q,
\end{align*}
i.e.\ also the set $Y_1$ is well distributed.
\end{itemize}
Finally we want to show that also the union $Y_0 \cup Y_1$ is well distributed:
\begin{align*}
q_{Y_0 \cup Y_1} =& \min (q_{Y_0}, q_{Y_1}, \min_{y0 \in Y_0, y_1 \in Y_1} \Vert y_0 - y_1 \Vert_2) \\
\geq& \min \left(q_{Y_0}, \frac{1}{3}q, \frac{1}{2}q - \frac{1}{3}q \right) = \frac{1}{6}q,
\end{align*}
where the last quantity was estimated with Eq.~\eqref{eq:definition_y''} and Eq.~\eqref{eq:proof:definition_TiN}.
\end{proof}

\subsection{Proof of \Cref{prop:disc_L2_from_cont_L2_new}}

\begin{proof}[Proof of \Cref{prop:disc_L2_from_cont_L2_new}]
Let $(y_j)_{j \in \N}$ be the sequence of geometric greedy points in $\Omega \subset \R^d$, $Y_m := \{ y_1, ..., y_m \}$ for all $m \in \N$ as specified in \Cref{subsec:geometric_properties}.
We define $m_0 \in \{2, 3, ...\} \subset \N$ as the smallest integer such that $Y_{m_0}$ satisfies 
\begin{align*}
h_{Y_{m_0}, \Omega} < h_0 ~~ \text{and} ~~ \frac{2^{-1/d}c_\Omega}{7C_\Omega} \cdot q_{Y_{m_0}} < \frac{2}{5}r.
\end{align*}
We then define $X_0 := Y_{m_0}$. 
Then the assumption of this \Cref{prop:disc_L2_from_cont_L2_new} yields
\begin{align*}
\Vert f - s_{f, X_{0}} \Vert_{L^2(\Omega)} \leq c_f \cdot h_{X_{0}}^\mu.
\end{align*}
Define 
\begin{align}
\label{eq:definition_a}
a := \frac{3}{22} \cdot \frac{2^{-1/d} c_\Omega}{7C_\Omega} \in \left( 0, \frac{3}{154} \right) \subset (0, 1).
\end{align}
Now \Cref{th:utility_theorem_new} can be applied for $g := f - s_{f, X_{0}}$ (which is continuous and bounded),
the reference set $Y_0 := X_0$ and $q := a \cdot q_{X_{0}}$ (which satisfies $q \leq \min \left( \frac{2^{-1/d} c_\Omega}{7 C_\Omega} \cdot q_{Y_0}, \frac{2}{5}r \right)$ ), 
which yields a set $Y_1 \subset \Omega$ that satisfies
\begin{align}
\begin{aligned}
\label{eq:proof:properties_initial}
\frac{1}{3} a q_{X_{0}} \leq& ~ q_{Y_{1}} \leq h_{Y_{1}, \Omega} \leq \frac{22}{3} a  q_{X_{0}}, \\
\frac{1}{6} aq_{X_{0}} \leq& ~ q_{X_{0} \cup Y_{1}}, \\
\Vert f - s_{f, X_{0}} \Vert_{L^2(Y_{1})} \leq& ~ \sqrt{\tilde{C}_{d, \alpha}} \cdot \Vert f - s_{f, X_{0}} \Vert_{L^2(\Omega)}.
\end{aligned}
\end{align}
Now we simply define $X_1 := X_0 \cup Y_1$. Due to $s_{f, X_0}|_{X_0} = f|_{X_0}$,
Eq.~\eqref{eq:proof:properties_initial} turns into
\begin{align}
\begin{aligned}
\label{eq:proof:properties_initial2}
\frac{1}{6} a q_{X_{0}} \leq& ~ q_{X_{1}} \leq h_{X_{1}, \Omega} \leq \frac{22}{3} a  q_{X_{0}}, \\
\Vert f - s_{f, X_{0}} \Vert_{L^2(X_{1})} \leq& ~ \sqrt{\tilde{C}_{d, \alpha}} \cdot \Vert f - s_{f, X_{0}} \Vert_{L^2(\Omega)}.
\end{aligned}
\end{align}
Now the idea is to iteratively apply \Cref{th:utility_theorem_new} to build the sequence $(X_m)_{m \geq 0}$ of sets. \\ 
We formalize this strategy via induction: 
\begin{enumerate}
\item The start of the induction is given by the sets $X_{0}$ and $X_{1}$ from above, 
which satisfy the desired geometric estimates on the distribution of the points as well as the estimate between discrete $L^2(X)$ norm and continuous $L^2(\Omega)$ norm, 
see Eq.~\eqref{eq:proof:properties_initial2}.
\item Assume for $m \geq 0$ we have two sets $X_{m}, X_{m+1}$ such that 
\begin{align}
\frac{1}{6} a^{m+1} q_{X_{0}} \leq& ~ q_{X_{m+1}} \leq h_{X_{m + 1}, \Omega} \leq \frac{22}{3} a^{m+1} q_{X_{0}}, \label{eq:proof:properties_assump1} \\
\Vert f - s_{f, X_{m}} \Vert_{L^2(X_{m + 1})} \leq& ~ \sqrt{\tilde{C}_{d, \alpha}} \cdot \Vert f - s_{f, X_{m}} \Vert_{L^2(\Omega)}. \notag %
\end{align}
Now we can leverage again \Cref{th:utility_theorem_new} for $g := f - s_{f, X_{m+1}}$ and $q: = a^{m+2} q_{X_0}$ and reference set $Y_0 := X_{m+1}$, as the choice of $q$ satisfies the requirements:
\begin{align*}
q \equiv 6a \cdot \frac{1}{6} a^{m+1} q_{X_0} \stackrel{\text{Eq.}~\eqref{eq:proof:properties_assump1}}{\leq} 6a \cdot q_{X_{m+1}} \stackrel{\text{Eq.}~\eqref{eq:definition_a}}{\leq} \frac{18}{22} \cdot \frac{2^{-1/d} c_\Omega}{7C_\Omega} \cdot q_{X_{m+1}}.
\end{align*}
Thus, \Cref{th:utility_theorem_new} gives a set $Y_1$ satisfying
\begin{align}
\begin{aligned}
\label{eq:proof:properties_initial3}
\frac{1}{3} a^{m+2} q_{X_0} \leq& ~ q_{Y_1} \leq h_{Y_1, \Omega} \leq \frac{22}{3} a ^{m+2} q_{X_0}, \\
\frac{1}{6} a^{m+2} q_{X_0} \leq& ~ q_{X_{m+1} \cup Y_1}, \\
\Vert f - s_{f, X_{m+1}} \Vert_{L^2(Y_1)} \leq& ~ \sqrt{\tilde{C}_{d, \alpha}} \cdot \Vert f - s_{f, X_{m+1}} \Vert_{L^2(\Omega)}. 
\end{aligned}
\end{align}
Now we define $X_{m+2} := X_{m+1} \cup Y_1$.
Due to $s_{f, X_{m+1}}|_{X_{m+1}} = f|_{X_{m+1}}$ Eq.~\eqref{eq:proof:properties_initial3} turns into
\begin{align*}
\frac{1}{6} a^{m+2} q_{X_0} \leq& ~ q_{X_{m+2}} \leq h_{X_{m+2}, \Omega} \leq \frac{22}{3} a^{m+2} q_{X_0}, \\
\Vert f - s_{f, X_{m+1}} \Vert_{L^2(X_{m+2})} \leq& ~ \sqrt{\tilde{C}_{d, \alpha}} \cdot \Vert f - s_{f, X_{m+1}} \Vert_{L^2(\Omega)}. 
\end{align*}
Thus, we have found a new pair of sets $X_{m+1}, X_{m+2}$ with corresponding properties and the induction step is finished.
\end{enumerate}
Overall we have found a nested sequence of sets $(X_m)_{m \geq 0}$ that satisfy the geometric properties for $m \geq 0$.
To obtain the desired estimate on the discrete $L^2$ norm, we just need to leverage the assumption on the decay of $\Vert f - s_{f, X} \Vert_{L^2(\Omega)}$.
The constant $c_0$ is given as $c_0 := q_{X_0}$ and only depends on $h_0$ and the region $\Omega$.
\end{proof}

\ifappendix

~ \newpage
~ \newpage

\section{Future research}

\begin{itemize}
\item \tw{I could do some numerical experiments on the analytic gap via mpmath, plotting stuff as in Eq.~\eqref{eq:discrete_Linfty_error}.}
\end{itemize}

\section{Discussion} \label{sec:disucssion}

\begin{idea}
Can \Cref{th:one-to-one_correspondence} help to answer the search for the maximal $\theta > 1$ such that $k \in \mathcal{H}_\theta$?
Especially we could consider $f = k(\cdot, x^*)$ for $x^* \in \Omega$ and interpolate it with \textit{any} well distributed points $X \subset \Omega$.
But not sure if this yields anything.
\end{idea}

\subsubsection{Further ideas}

\tw{Few more bullet points I came up with and which needs to be worked on:}

\begin{itemize}
\item In the setting of \Cref{th:inverse_statement_generalized}, assume $\vartheta > 1$.
Then we know $f \in \ns$.
This allows us to write (standard power function estimate + Duchons localization principle)
\begin{align*}
\Vert f - s_{f, X} \Vert_{L^\infty(\Omega)} &\leq c h_X^{\frac{s_\infty}{2}} \cdot \Vert f - s_{f, X} \Vert_{\ns} \\
\Rightarrow \qquad \Vert f - s_{f, X} \Vert_{L^2(\Omega)} &\leq c h_X^{\frac{s_\infty + d}{2}} \cdot \Vert f - s_{f, X} \Vert_{\ns}
\end{align*}
On the other hand we have
\begin{align*}
\Vert f - s_{f, X} \Vert_{L^2(\Omega)} &\leq c_f \cdot h_X^{\vartheta \cdot \frac{s_\infty + d}{2}},
\end{align*}
which limits the decay of $\Vert f - s_{f, X} \Vert_{\ns}$, we obtain a statement like
\begin{align*}
\sup_{\substack{X \subset \Omega \\ h_X \leq h}} \Vert f - s_{f, X} \Vert_{\ns} \geq c \cdot h^{(\vartheta_1 - 1) \cdot \frac{s_\infty + d}{2}}.
\end{align*}
See also 25.01.23\textbackslash A1 (also took a picture).
\item So far I assumed a faster polynomial convergence rate, i.e.\ e.g.\ $\vartheta' < \vartheta$.
However this can be refined using also logarithmic decay rates.
Working in the setting of \Cref{th:inverse_statement_generalized} and assuming
\begin{align*}
\Vert f - s_{f, X_m} \Vert_{L^2(X_{m+1})} \leq h_{X_m, \Omega}^{\frac{s_\infty + d}{2}} \cdot \log(h_{X_m, \Omega})^\beta
\end{align*}
we can calculate (similar to the proof of \Cref{th:inverse_statement_generalized} using $\vartheta' = 1$):
\begin{align*}
\Vert s_{f, X_{m+1}} - s_{f, X_m} \Vert_{\ns}^2 &\leq \Vert A_{X_{m+1} \cup X_m}^{-1} \Vert \cdot |X_{m+1}| \cdot \Vert s_{f, X_{m+1}} - s_{f, X_m} \Vert_{L^2(X_m+1)}^2 \\
&\leq C \cdot q_{X_m \cup X_{m+1}}^{-s_\infty} \cdot q_{X_{m+1}}^{-d} \cdot h_{X_m, \Omega}^{s_\infty + d} \cdot \log(h_{X_m, \Omega})^\beta \\
&\leq C \cdot a^{-s_\infty m} \cdot a^{-dm} \cdot a^{m (s_\infty + d)} (m \cdot \log(a))^\beta \\
&= C \cdot m^\beta.
\end{align*}
In order to have $\sum_{m=m_0}^\infty \Vert s_{f, X_{m+1}} - s_{f, X_m} \Vert_{\ns}^2 \leq C \cdot \sum_{m=m_0}^\infty m^\beta \stackrel{!}{=} \infty$, we need $\beta > 1$. \\
Like this we reduced $\vartheta' < \vartheta$ to $\vartheta' = \vartheta$ if we have additionally a logarithmic converging term as $\log(h_{X_m, \Omega})^\beta$ with $\beta > 1$. \\
It's pretty clear that this can be generalized beyond the $\vartheta' = 1$ case.
\end{itemize}

\subsection{Case $\vartheta \in (1, 2]$}

Aboves theorem and proof above does not give better results like $f \in \Ht$ for $\vartheta > 1$, 
which does not seem to be reasonable (why should we have such a break in this nice scale of convergence?):

\begin{itemize}
\item The approach of constructing a sequence of interpolants $s_n$ based on the kernel $k$, which converges to $f \in \Ht$ is intrinsically limited to $\vartheta \leq 1$:
\begin{itemize}
\item We estimated $\Vert \cdot \Vert_{\Ht}$ by $\Vert \cdot \Vert_{L^2(\Omega)}$ and $\Vert \cdot \Vert_{\ns}$.
If $\vartheta > 1$, we cannot use this anymore, because as an upper bound we would need something bigger, e.g.\ $\Vert \cdot \Vert_{\mathcal{H}_{\theta = 2}}$.
\item Furthermore the maximal $\vartheta^* > 1$ such that $k(\cdot, x) \in \Ht$ is unclear. 
If $f \in TL^2(\Omega)$, then it has a very fast rate of convergence. 
However as $k(\cdot, x) \notin TL^2(\Omega)$, we can never construct a sequence of interpolants based on the kernel $k$ that converges in $\Vert \cdot \Vert_{\mathcal{H}_{\theta = 2}}$ norm to $f$.
\end{itemize}
\item Therefore the idea is to use a power kernel $k^{(\theta)}(x, y)$ with $\vartheta > 1$ for the approximation:
\begin{align}
\label{eq:power_kernel}
k^{(\theta)}(x,y) = \sum_{n=1}^\infty \lambda_n^{\theta} \varphi_n(x) \varphi_n(y).
\end{align}
\begin{itemize}
\item Using a power kernel $k^{(\theta)}(x, y)$, one can construct a sequence of interpolants $s_n^{(\vartheta)}$ based on the power kernel $k^{(\theta)}$ and reuse similar ideas as before: \\
Maybe one could immediately use the power kernel $s_n^{(\theta=2)}$ and measure the approximation rate in some weaker norm, i.e.\
\begin{align}
\label{eq:idea_how_to_estimate}
&\Vert s_{X_n}^{(\theta = 2)} - s_{X_{n+1}}^{(\theta = 2)} \Vert_{\Ht} \notag \\
\leq~& \Vert s_{X_n}^{(\theta = 2)} - s_{X_{n+1}}^{(\theta = 2)} \Vert_{L^2(\Omega)}^{\textcolor{red}{?}} \cdot \Vert s_{X_n}^{(\theta = 2)} - s_{X_{n+1}}^{(\theta = 2)} \Vert_{\mathcal{H}_{\theta = 2}}^{\textcolor{red}{?}}.
\end{align}
However then the question is how to estimate the two quantities on the right hand side.
I hoped to be able to use arguments similar to \textit{escaping the native space}, because this is what is actually happening here: 
$k^{(\theta = 2)}$ is a very smooth kernel, and we want to show that $f \in \Ht$ for some $\vartheta \in (1, 2)$ -- i.e.\ this approach sounds promising.
However I was not able to work out estimates on this.
See 02.01.23\textbackslash A and 18.01.23\textbackslash A for some approaches. \\
Latest numerical examples indicate that a convergence order using a kernel $k$ does not transfer to the same convergence order when using the convolutional kernel $k^{(2)}$, see (Section 2.5 of) kernel\_approx/ 56\_example\_wendland.
\item However even though it might be possible to derive convergence rates on Eq.\ \eqref{eq:idea_how_to_estimate} based on estimates on
\begin{align*}
\Vert f  - s_{X_n} \Vert_{L^2(\Omega)} = \Vert f  - s_{X_n}^{(1)} \Vert_{L^2(\Omega)} \leq C h_{X_n}^\mu,
\end{align*}
it will be challenging to conclude the remaining proof:
Stability bounds (i.e.\ bounds on $\Vert A^{-1} \Vert_{2,2}$ via lower bounds on the smallest eigenvalue of the kernel matrix $A$) seem to be only only available for RBF kernels, however the power kernel is likely not RBF anymore!
Therefore as I'm unaware of any results that lower bound the smallest eigenvalue of the kernel matrix of non-RBF kernels, I have no good clue how to proceed here. 
Deriving lower bounds on the smalles eigenvalue of the kernel matrix for power kernels might be a research topic in its own.
\item Furthermore I'm not even sure whether the whole proof strategy really works:
I'm not sure whether asymptotically uniformly distributed points are the correct choise of point to approximate function in the power spaces $\Ht$, $\vartheta > 1$ 
-- or whether it might be more beneficial in this case to place more points at the boundary (oversampling near the boundary).
\end{itemize}
\item As a workaround, one could argument to just use a smoother kernel right from the beginning -- but not the power kernel.
This is nice from the practical point of view because we can still derive results:
\begin{itemize}
\item Assume a kernel $k$ such that $\ns \asymp H^\tau(\Omega)$, i.e.\ we are in the Sobolev setting.
Then $TL^2(\Omega)$ will not be $H^{2\tau}(\Omega)$, but some subset due to unclear boundary conditions. \\
Instead we could pick a kernel $k_2$ which is smoother, such that $\mathcal{H}_{k_2} \asymp H^{2\tau}(\Omega)$.
Using the same approach and theory above, we can show that a sufficient fast approximability of $f$ results in $f \in H^{\tilde{\tau}}$ for $\tau < \tilde{\tau} < 2\tau$.
This statement then still reveals the smoothness of $f$, but no longer the boundary conditions (which are anyway unclear). \\
The nice part is, that $f \in H^{\tilde{\tau}}$ for $\tau < \tilde{\tau} < 2\tau$ has some classical Sobolev smoothness which directly translates to $\vartheta < 1$ power spaces smoothness of $k_2$, but to not necessarily to $\vartheta > 1$ power space smoothness of $k$ (because there also boundary conditions contribute).
\item We already had to assume an interior cone condition for our technical statements to hold (see \Cref{th:utility_theorem_new} and \Cref{prop:disc_L2_from_cont_L2_new}),
 which is slightly more general than Lipschitz boundary (we could have interior cone condition, but violate an outerior cone condition, i.e.\ no Lipschitz boundary). \\
But if we additionally assume Lipschitz boundary, then Sobolev kernels yield RKHS that are norm equivalent to Sobolev spaces.
\end{itemize}
but a bit unsatisfacory because we do not stick to the general RKHS theory anymore and make use of Sobolev equivalences.
\end{itemize}

\subsection{On $L^\infty(\Omega)$ convergence rates}
\label{subsec:L2_conv_rates_sharp}

\begin{center}
\textbf{\textcolor{red}{\tw{I DID NOT CONTINUE FROM HERE ON!}}}
\end{center}

Based on Duchons localization principle \cite{duchon1978erreur} it is in general possible to show that the $\Vert \cdot \Vert_{L^2(\Omega)}$ error decays faster by a rate of $d/2$ than the $\Vert \cdot \Vert_{L^\infty(\Omega)}$ error \cite{light1998power, wendland1997sobolev}. \\

\subsection{Connect $L^\infty$ and $L^2$ error}
\label{subsec:connect_Linfty_L2}

\tw{
\begin{itemize}
\item I did some experiments using the Wendland $k=0$ kernel and $0.5 + x - x^2$ as target function, which allows for superconvergence.
I was quite surprised to observe a convergence rate as $n^{-2}$, because this is the maximal possible one (lower bound for linear interpolation, see \cite{wenzel2023analysis} and thus as fast as the $\Vert \cdot \Vert_{L^2(\Omega)}$ error rate). \\
One possible explanation might be that I squeezed out all the remaining convergence terms from the $\Vert f - s_{f, X_n} \Vert_{\ns}$ part, which is the required one for Duchon's localization principle.
\end{itemize}
}

Now I want to assume a lower bound on the $L^2$ error and then deduce a slower lower bound on the $L^\infty$ error (i.e.\ a slower rate of convergence).

\begin{lemma}
\label{lem:faster_L2_convergence}
Consider $f \in \ns$. Assume
\begin{align*}
\exists_{C>0} \forall_{X \subset \Omega, h_X/q_X \leq c_0} \Vert f - \Pi_X(f) \Vert_{L^\infty(\Omega)} \leq C h_X^\mu \cdot \Vert f - \Pi_X(f) \Vert_{\ns}.
\end{align*}
Then
\begin{align*}
\exists_{C > 0} \forall_{X \subset \Omega, h_X/q_X \leq c_0} \Vert f - \Pi_{X}(f) \Vert_{L^2(\Omega)} \leq C h_X^{\mu + d/2} \Vert f - \Pi_{X}(f) \Vert_{\ns}.
\end{align*}
\end{lemma}

\begin{proof}
This is pretty straight forward by using \cite{wendland1999meshless}.
\end{proof}

\noindent \Cref{lem:faster_L2_convergence} can also be stated in the reverse way:

\begin{lemma}
\label{lem:bounded_Linfty_convergence}
Consider $f \in \ns$.
Assume
\begin{align}
\label{eq:lower_L2_bound}
\exists_{c>0} \exists_{\mu > 0} \forall_{X \subset \Omega, h_X / q_X \leq c_0} \Vert f - \Pi_{X}(f) \Vert_{L^2(\Omega)} \geq c h_X^{\mu} \Vert f - \Pi_{X}(f) \Vert_{\ns}.
\end{align}
Then
\begin{align}
\label{eq:no_upper_Linfty_bound}
\forall_{\epsilon > 0} \nexists_{C > 0} \forall_{X \subset \Omega, h_X / q_X \leq c_0} \Vert f - \Pi_{X}(f) \Vert_{L^\infty(\Omega)} \leq C h_X^{\mu - d/2 + \epsilon} \Vert f - \Pi_{X}(f) \Vert_{\ns}.
\end{align}
\end{lemma}

\begin{proof}
Proof via contradiction:
Assume there exists an $\epsilon > 0$ and a $C > 0$ such that Eq.\ \eqref{eq:no_upper_Linfty_bound} holds.
Then leveraging \Cref{lem:faster_L2_convergence} we can conclude an additional $L^2$ decay rate based on that $L^\infty$ decay rate, i.e.\ $\mu - d/2 + \epsilon + d/2 = \mu + \epsilon$.
However this contradicts the assumption from \Cref{eq:lower_L2_bound}.
\end{proof}

Some remarks on both \Cref{lem:faster_L2_convergence} and \Cref{lem:bounded_Linfty_convergence}:
\begin{itemize}
\item We do not need the $\ns$-norm on the right hand side withing \Cref{eq:lower_L2_bound} and \Cref{eq:no_upper_Linfty_bound}, also any power space norm (as long as finite) should do the job! 
The only crucial ingredient is that the proof of the additional $L^2(\Omega)$ convergence rate works, see \cite{wendland1999meshless}.
\item Maybe we can formulate Lemma \ref{lem:bounded_Linfty_convergence} also in a positive way by considering $\sup_{X \subset \Omega, h_X / q_X \geq c_0} \Vert f - \Pi_X(f) \Vert_{L^\infty}$ or something!
I think I did this by also havin gLemma \Cref{lem:faster_L2_convergence}.
\item \textcolor{red}{Interesting observation:}
If we take the ratio of Eq.\ \eqref{eq:lower_L2_bound} and \eqref{eq:no_upper_Linfty_bound}, then the $\Vert \cdot \Vert_{\ns}$ part on the right hand side cancels.
\end{itemize}

\subsection{Approaching the $d/2$ gap}

Now the main idea is to connect the results of the previous subsection:
\begin{enumerate}
\item Assume we are given an $L^\infty(\Omega)$ decay rate.
Then we can leverage \Cref{lem:faster_L2_convergence} to obtain a faster $L^2(\Omega)$ decay rate. 
\item Using the results of \Cref{subsec:L2_conv_rates_sharp} we can conclude in which particular power space $f$ is contained.
\end{enumerate}

\tw{The main obstacle is that the previous subsection frequently makes use of $\Vert f - \Pi_X(f) \Vert_{\ns}$ expressions, which however do not appear originally in the assumption of Theorem 6.1 from \cite{schaback2002inverse}.}

~ \newpage
~ \newpage

\section{$d/2$ gap}

\subsection{Introduction} \label{subsec:introduction}

Robert Schaback mentioned the $d/2$ gap from time to time, but I never really understood what it is about.
In October 2022, Gabriele suggested \cite{karvonen2022error} to me, which mentioned the $d/2$ gap and referenced \cite{schaback2002inverse}, which introduces the notion $d/2$ gap after the proof of its Theorem 6.1: \\
\cite{schaback2002inverse} considers kernels $k$ with RBF $\Phi$ with Fourier transform $\hat{\Phi}$ satisfying
\begin{align}
\label{eq:decay_condition}
c_1 (1+\Vert \omega \Vert_2)^{-d-s_\infty} \leq \hat{\Phi}(\omega) \leq c_2(1+\Vert \omega \Vert_2)^{-d-s_\infty}, \quad 0 < c_1 \leq c_2
\end{align}
\textit{Direct estimates} bound the error for functions for functions $f \in \ns$  in the RKHS as
\begin{align}
\label{eq:direct_estimate}
\Vert f - s_{f, X} \Vert_{L^\infty(\Omega)} \leq C_\Phi h^{s_\infty/2} \Vert f - s_{f, X} \Vert_{\ns}.
\end{align}
\textit{Inverse estimates} state that a sufficiently fast error decay implies that the function is included in the RKHS. 
\cite[Theorem 6.1]{schaback2002inverse} states (with notation adopted to here):
\begin{theorem}
Let $\Omega \subset \R^d$ be a bounded and open domain satisfying an interior cone condition. 
The positive definite basis function $\Phi$ should satisfy the decay condition \ref{eq:decay_condition}. 
Suppose further that for some $f \in C(\Omega)$ there exists constants $\mu > 0$ and $c_f > 0$ such that $\Vert f - s_{f, X}\Vert_{L^\infty(\Omega)} \leq c_f h_X^\mu$ for all $X \subset \Omega$ with $h_X$ sufficiently small. 
If $2\mu > s_\infty + d$, then $f$ must belong to the native space $\ns$.
\end{theorem}
The $d/2$ gap occurs, because Theorem \ref{th:inverse_statement} requires a decay of $\mu > s_\infty/2 + d/2$, while Eq.\ \ref{eq:direct_estimate} only gives $s_\infty/2$.
We note it holds that $h^{d/2} = n^{-1/2}$ for asymptotically uniformly distributed points.

\subsection{Analysis of proof}

When analyzing the proof of Theorem \ref{th:inverse_statement}, one observe that one estimate within the second block of estimates is not necessarily sharp.
For convenience, we include this block of estimates here:
\begin{align}
\label{eq:nonsharp_estimate}
\Vert s_{f, Y} - s_{f, X} \Vert_{\ns}^2 &\leq \Vert A_{Y, \Phi}^{-1} \Vert_{2,2} \Vert s_{f, Y} - s_{f, X} \Vert_{L^2(Y)}^2 \notag \\
&\leq \frac{1}{\gamma_Y} \sum_{y \in Y} |f(y) - s_{f, X}(y)|^2 \notag \\
&\leq c_\Phi^{-1} q_Y^{-s_\infty} |Y| c_f^2 h_X^{2\mu},
\end{align}
where $\frac{1}{\gamma_Y}$ is a sharp upper bound for the norm of the inverse matrix.
Running several numerical experiments, we observe that the step in Eq.\ \eqref{eq:nonsharp_estimate} is not necessarily sharp (namely for $f$ with low smoothness, e.g.\ $x^\alpha \in H^1([0, 1])$, $\alpha = 0.51$):
Here a (discrete) $L^2$ norm was estimated with help of an $L^\infty$ estimate (namely $\Vert f - s_{f, X}\Vert_{L^\infty(\Omega)} \leq c_f h_X^\mu$ as stated as assumption in Theorem \ref{th:inverse_statement}.
This is a straightforward estimate, but therefore not necessarily sharp\footnote{Interestingly, this estimate seems to be sharp for functions $f$ with higher smoothness, e.g.\ $x^{2} \in H^1([0, 1])$}: 
When recalling sampling inequalities, $L^2(\Omega)$ estimates give a faster rate of decay than $L^\infty(\Omega)$ estimates --- namely an additional factor of $h^{d/2}$, which coincides exactly with the observed gap of $d/2$.

\subsection{Overview of approaches how to close the gap}

\noindent I tried several approaches to close this gap, i.e.\ estimating Eq.\ \eqref{eq:nonsharp_estimate} in a better way:
\begin{enumerate}
\item One could try to modify the estimates of the first block of inequalities within the proof of \cite[Theorem 6.1]{schaback2002inverse} (which is not stated here) in order to obtain $\Vert s_{f, Y} - s_{f, X} \Vert_{L^\infty(Y)}^2$ instead of $\Vert s_{f, Y} - s_{f, X} \Vert_{L^2(Y)}^2$.
The difficulty here is that this yields to different operator norms like $\Vert A_{Y, \Phi}^{-1} \Vert_{\mathcal{L}(L^\infty(Y) \rightarrow L^1(Y))}$ (03.11.22\textbackslash A3) or even different operators (03.11.22\textbackslash A2, 02.11.22\textbackslash C1) which are difficult to estimate.
\item I tried to estimate $\Vert s_{f, Y} - s_{f, X} \Vert_{L^2(Y)}^2$ via $\Vert s_{f, Y} - s_{f, X} \Vert_{L^2(\Omega)}^2$, but could not find a connection.
\item I tried to use the additional decay of $n^{-1/2}$ due to $f$-greedy, however then possibly we loose the asymptotically uniform distirbution of points, which is required e.g.\ to upper bound $\Vert A_{Y, \Phi}^{-1} \Vert_{2,2}$
\item I tried to estimate $\Vert s_{f, Y} - s_{f, X} \Vert_{L^2(Y)}^2$ via $\Vert P_X \Vert_{L^2(Y)}^2 \cdot \Vert s_{f, Y} - s_{f, X} \Vert_{\ns}^2$, i.e.\ by using the standard power function estimate. 
However $\Vert P_X \Vert_{L^2(Y)}^2$ seems to decay only with the same rate as $\Vert P_X \Vert_{L^\infty(Y)}^2$, thus not providing any benefit.
\item I tried to understand why $\Vert f - s_{f, X} \Vert_{L^2(\Omega)}^2$ decays faster than $\Vert f - s_{f, X} \Vert_{L^2(\Omega)}^2$: 
Therefore I analyzed \cite[Theorem 5]{wendland1997sobolev}, which extends ideas from \textit{Duchon's localization trick} \cite{duchon1978erreur} (french paper). 
This idea worked out best and will be elaborated on in Subsection \ref{subsec:on_closing_the_gap}.
\end{enumerate}

\subsection{On closing the $d/2$ gap} \label{subsec:on_closing_the_gap}

I analyzed \cite[Theorem 5]{wendland1997sobolev} and its proof in detail, and I think it can be modified to obtain the following theorem:
\tw{The following thoughts can probably just found in Subsection \textit{2.2.2. Sobolev bounds on discrete q norms} of \cite{narcowich2005sobolev}}

\begin{theorem}[Version of Theorem 5 from Wendland paper]
\label{th:modified_theorem}
Let $\Omega$ be an open and bounded subset of $\R^d$ having the cone property and a Lipschitz boundary.
Denote by $s_f$ the interpolant to $f \in W_2^m(\R^d)$ with $m = \lceil s \rceil$.
Let $s$ and $h$ be defined as in \cite[Theorem 1]{wendland1997sobolev}.
Then we have for sufficiently small $h$ and $p \geq 2$:
\begin{align}
\label{eq:modified_theorem}
\frac{1}{|T_h|} \sum_{t \in T_h} | (f-s_f)(t)|^p \leq \frac{1}{|T_h|} \Vert P_{X} \Vert_{L^\infty(\Omega)}^p \Vert f - s_{f, X} \Vert_{W_2^m(\Omega)}^p \\
\stackrel{p=2}{\Rightarrow} \frac{1}{|T_h|} \sum_{t \in T_h} | (f-s_f)(t)|^2 \leq \frac{1}{|T_h|} \Vert P_{X} \Vert_{L^\infty(\Omega)}^2 \Vert f - s_{f, X} \Vert_{W_2^m(\Omega)}^2 \notag
\end{align}
\end{theorem}

\tw{I don't want to have the Power function here, modify this!}

\begin{proof}
The proof is quite close to the original proof: 
Instead of considering the integral and decomposing it into the sum $\sum_{t \in T_h}$, we leave the integral out and directly consider the sum:
\begin{align*}
\sum_{t \in T_h} |(f - s_{f, X})(t)|^p &\leq \sum_{t \in T_h} C P_X^p(t) \cdot \Vert (f-s_f)^{B(t, Mh)} \Vert_{W_2^m(\R^d)}^p \\
&\leq C \Vert P_X \Vert_{L^\infty(\Omega)}^p \cdot \sum_{t \in T_h} \Vert (f-s_f)^{B(t, Mh)} \Vert_{W_2^m(\R^d)}^p.
\end{align*}
For the $\sum_{t \in T_h}$ part we use exactly the same calculation as in \cite{wendland1997sobolev} which shows
\begin{align*}
\sum_{t \in T_h} \Vert (f-s_f)^{B(t, Mh)} \Vert_{W_2^m(\R^d)}^p \leq \left( \sum_{|\alpha| \leq m} \int_{\R^d} M_1 |D^\alpha(f-s_{f, X})|^2 \mathrm{d}x \right)^{p/2},
\end{align*}
such that we obtain
\begin{align*}
\sum_{t \in T_h} |(f - s_{f, X})(t)|^p &\leq C \Vert P_X \Vert_{L^\infty(\Omega)}^p \Vert f - s_{f, X} \Vert_{W_2^m(\R^d)}^p \\
&\leq C' \Vert P_X \Vert_{L^\infty(\Omega)}^p \Vert f - s_{f, X} \Vert_{W_2^m(\Omega)}^p.
\end{align*}
Dividing this result by $|T_h|$ gives the final statement.
\end{proof}

\begin{rem}
Using\footnote{\cite[Lemma 2]{wendland1997sobolev} only mentions $|T_h| \leq C h^{-d}$. 
However \cite[Lemma 2]{wendland1997sobolev} (ii) states $\Omega \subset \cup_{t \in T_h} B(t, Mh)$, such that a volume comparison gives $|\Omega| \leq |T_h| \cdot c_{M,d} h^d$, which gives $|T_h| \geq |\Omega| c_{M,d}^{-1} h^{-d}$, i.e.\ we have the equivalence $|T_h| \asymp h^{-d}$} $|T_h| \asymp h^{-d}$
turns Eq.\ \eqref{eq:modified_theorem} into (after taking the $p$th-root)
\begin{align}
\label{eq:result_with_prefactor}
\left( \frac{1}{|T_h|} \sum_{t \in T_h} | (f-s_f)(t)|^p \right)^{1/p} \leq h_X^{d/p} \Vert P_{X} \Vert_{L^\infty(\Omega)} \Vert f - s_{f, X} \Vert_{W_2^m(\Omega)}.
\end{align}
The left hand side is kind of a discretization of the integral $\Vert f - s_f \Vert_{L^p(\Omega)}$, i.e.\ the Eq.\ \eqref{eq:result_with_prefactor} looks like a discrete version of the statement \cite[Theorem 5]{wendland1997sobolev} (after replacing $\Vert P_X \Vert_{L^\infty(\Omega)}$ by $h^{k + 1/2}$).
\end{rem}

\begin{rem}
\label{rem:how_to_use_theorem}
Some more remarks:
\begin{itemize}
\item Theorem \ref{th:modified_theorem} is not directly applicable to solve the $d/2$ gap (i.e.\ to better estimate the step within Eq.\ \eqref{eq:nonsharp_estimate}) but close:
\begin{itemize}
\item Theorem \ref{th:modified_theorem} requires the assumption that our function $f$ is Sobolev. 
However this is exactly that the inverse Theorem \ref{th:inverse_statement} wants to show.
One relief might be that we want to apply Theorem \ref{th:modified_theorem} to $s_{f,Y} - s_{f,X}$, which are both sums of kernels and therefore Sobolev.
\item One further problem might be, that the function which we consider changes "all the time", because $s_{f,Y}$ is updated.
The original proof within \cite{wendland1997sobolev} circumvented this by using $s_{f, Y}(y) = f(y)$ for all $y \in Y$.
In order to use this trick as well, we need to remove all Sobolev related assumptions on $f$ within Theorem \ref{th:modified_theorem}, which might be difficult because we heavily rely on restriction and extension statements.
\item The relation between $t \in T_h$ and $y \in Y$ is a bit unclear.
I hoped that we could use $t = y \in Y$, which might indeed work because $Y$ is asymptotically uniformly distributed (and this is what I expect $T_h$ to be) - but this needs to be checked.
\end{itemize}
\item I do not really understand \textit{why} Duchons localization principle works.
In my opinion it should also be applicable to the power function, thus deriving a faster rate of decay for $\Vert P_n \Vert_{L^2(\Omega)}$ than for $\Vert P_n \Vert_{L^\infty(\Omega)}$ (which however does not hold numerically).
The key step needs to be somewhere hidden in the use of the restriction and extension statements.
\end{itemize}
\end{rem}

\noindent Based on these Remarks \ref{rem:how_to_use_theorem} we pose the following conjecture as a follow up to Theorem \ref{th:modified_theorem}:

\begin{conj}
\label{conj:conjecture1}
Using the same assumptions as Theorem \ref{th:inverse_statement}, we have for uniformly distributed points $Y \subset \Omega$ with $h_Y \leq h_X / 2$:
\begin{align}
\label{eq:conjectured_eq}
\frac{1}{|Y|} \sum_{y \in Y} | (f-s_{f,X})(y)|^p \leq \frac{1}{|X|} \Vert P_{X} \Vert_{L^\infty(\Omega)}^p \Vert f - s_{f, X} \Vert_{W_2^m(\Omega)}^p
\end{align}
\end{conj}

\begin{rem}
Note that we are interested in the case $|Y| \approx 2^d|X|$, i.e.\ $|Y| \asymp |X|$.
This could be seen as a kind of \textit{average error} (in contrast to a worst case error) which Robert Schaback sometimes mentioned.
\end{rem}

\noindent The following Lemma shows how to close the $d/2$ gap based on Conjecture \ref{conj:conjecture1}:

\begin{lemma}
\label{lem:d_half_gap_closed}
Same setting as Theorem \ref{th:inverse_statement}.
Suppose further that for some $f \in C(\Omega)$ there exists constants $\mu > 0$ and $c_f > 0$ such that $\Vert f - s_{f, X}\Vert_{L^\infty(\Omega)} \leq c_f h_X^\mu$ for all $X \subset \Omega$ with $h_X$ sufficiently small. 
If $\mu > s_\infty / 2$, then $f$ must belong to the native space $\ns$.
\end{lemma}

\begin{proof}
We use mainly the same proof as in \cite[Theorem 6.1]{wendland1997sobolev}, but we make use of Eq.\ \eqref{eq:conjectured_eq} (for $p=2$) to better estimate the step from Eq.\ \eqref{eq:nonsharp_estimate}:
\begin{align*}
\Vert s_{f, Y} - s_{f, X} \Vert_{\ns}^2 &\leq c_\Phi^{-1} q_Y^{-s_\infty} \sum_{y \in Y} |f(y) - s_{f, X}(y)|^2 \notag \\
&\leq c_\Phi^{-1} q_Y^{-s_\infty} \Vert P_{X} \Vert_{L^\infty(\Omega)}^2 \Vert f - s_{f, X} \Vert_{W_2^m(\Omega)}^2
\end{align*}
Now we quickly have to cheat a bit because Theorem \ref{th:modified_theorem} and Conjecture \ref{conj:conjecture1} were stated using the power function. 
However I should state them using the assumption $\Vert f - s_{f, X}\Vert_{L^\infty(\Omega)} \leq c_f h_X^\mu$, thus replacing $\Vert P_{X} \Vert_{L^\infty(\Omega)}^2$ by $c_f^2 h_X^{2\mu}$:
\begin{align*}
\Vert s_{f, Y} - s_{f, X} \Vert_{\ns}^2 &\leq c_\Phi^{-1} c_f^2 q_Y^{-s_\infty} h_X^{2\mu}
\end{align*}
Now we use (following again \cite[Theorem 6.1]{wendland1997sobolev}) $Y = X_{n+1}, X = X_n$ with asymptotically uniformly distributed points, such that $q_Y \asymp h_X$.
Since $\mu > s_\infty/2$ we have $2\mu - s_\infty > 0$ and thus obtain a decay. 
Now we can reuse the remaining arguments on \cite[Proof of Theorem 6.1]{wendland1997sobolev} to conclude convergence of $s_{f, X}$ to an element which must be $f$.
\end{proof}

\section{Further remarks}

\begin{itemize}
\item Crucial inside: We fix a function, and then we interpolate (with denser set of points). 
If we consider for example the power function, we do \textit{not} observe a faster rate of decay for $\Vert P_n \Vert_{L^2(\Omega)}$ than for $\Vert P_n \Vert_{L^\infty(\Omega)}$.
\item The $f$-greedy analysis provides also an additional factor of $n^{-1/2} = h^{d/2}$, which is however in my opinion not related at all to this $d/2$ gap here:
The $d/2$ gap here is about asymptotically uniformly distributed points. 
The $f$-greedy however usually gives non-uniformly distributed points.
\item I did not further check the implications for the formulas and thoughts presented in \cite{karvonen2022error}.
\end{itemize}

\section{Numerical experiments}

For simplicity I did only numerical experiments in 1D so far: 
I used the Brownian bridge kernel $k(x, y) = \min(x, y) - xy$ and considered the family of functions
\begin{align*}
f_\alpha(x) = x^\alpha \cdot (1-x)^\alpha,
\end{align*}
which satisfies $f_\alpha(0) = f_\alpha(1) = 0$ for $\alpha > 0$. \\

\noindent From kernel\_approx/37\_brownian\_bridge\_example we recall:

\begin{itemize}
\item $\alpha > -1/2$: $f_\alpha \in L^2(\Omega)$, 
\item $\alpha > 1/2$: $f_\alpha \in \ns$, 
\item $\alpha > 3/2$: $f_\alpha \in TL^2(\Omega)$. 
\item Special case: $\alpha = 1$: $f \in TL^2(\Omega)$, especially $f_1 = \int_\Omega k(\cdot, y) 1 ~ \mathrm{d}y$
\end{itemize}

\noindent For the following approximation experiments I used $2^n + 1$ equidistant points $X_n$ for $n \in \{2, 3, .., 10\}$. \\
I investigated the approximation of the functions $f_\alpha$ for several $\alpha > 0$ values on the sequence of denser sets $\{ X_n \}_{n \in \{2, 3, .., 10\}}$ using several metrics.
I plotted the following quantities:
\begin{enumerate}
\item \textit{Discrete $L^\infty$ error}:
\begin{align}
\label{eq:discrete_Linfty_error}
\max_{y \in X_{n+1}} | s_{f,X_{n+1}}(y) - s_{f,X_n}(y) |
\end{align}
\item \textit{Discrete $L^2$ error}:
\begin{align}
\label{eq:discrete_L2_error}
\Vert s_{f, X_{n+1}} - s_{f, X_n} \Vert_{L^2(X_{n+1})} = \sqrt{\sum_{y \in X_{n+1}} | s_{f,X_{n+1}}(y) - s_{f,X_n}(y) |^2}
\end{align}
\item 
Squared RKHS norm of subsequent differences
\begin{align}
\label{eq:squared_rkhs_norm_diff}
\Vert s_{f, X_{n+1}} - s_{f, X_n} \Vert_{\ns}^2,
\end{align}
which is upper estimated according to Eq.\ \eqref{eq:nonsharp_estimate}
\begin{align}
\label{eq:upper_estimate}
\Vert A_{X_{n+1}}^{-1} \Vert \cdot \Vert s_{f, X_{n+1}} - s_{f, X_n} \Vert_{L^2(X_{n+1})}^2
\end{align}

\item Ratio of previous quantities:
\begin{align}
\label{eq:ratio_quantities}
\frac{\sum_{y \in X_{n+1}} | s_{f,X_{n+1}}(y) - s_{f,X_n}(y) |^2}{|X_{n+1}| \cdot \max_{y \in X_{n+1}} | s_{f,X_{n+1}}(y) - s_{f,X_n}(y) |} 
\end{align}
\end{enumerate}
The results are depicted in Figure \ref{fig:brownian_bridge_example}: 
We make the following observations:
\begin{enumerate}
\item Figure \ref{fig:brownian_bridge_example} bottom left:
Estimating Eq.\ \eqref{eq:squared_rkhs_norm_diff} by Eq.\ \eqref{eq:upper_estimate} seems to be a sharp estimate, the dashed line behaves all the time according to the solid line.
\item Figure \ref{fig:brownian_bridge_example} bottom right:
For rougher functions (i.e.\ smaller values of $\alpha$, i.e. blue colors) the ratio from Eq.\ \eqref{eq:ratio_quantities} approaches zero, which means that the numerator and the denominator do not follow the same rate of convergence.
This elucidates that the last step within Eq.\ \eqref{eq:nonsharp_estimate} is not sharp for rougher functions!
\end{enumerate}

\section{Gaussian kernel}

Talking to Jens Wirth, he pointed me to \cite[Chapter 2.1]{butzer2013semi} and I try to approximately summarize some ideas:
Similar to Subsection \ref{subsec:introduction}, where the notions of \textit{direct estimates} and \textit{inverse estimates} were introduced,
\cite{butzer2013semi} discusses \textit{direct theorems} and \textit{converse theorems} for family of operators $\{ T(t); 0 < t < \infty \}$ converging to the identity operator for $t \rightarrow 0+$.
While there is no obvious direct connections, the notions are very similar and its probable that the concepts are related:
Especially \cite[Definition 2.1.1]{butzer2013semi} introduces \textit{saturation or Favard classes}:
Those are classes of functions, where we have an equivalence of of direct and converse theorems. \\
Their main theorem is a saturation theorem for semi-groups of operators and Jens remarked a connection to my setting:
If we consider the Laplace operator and the corresponding semi-group, we end up with something like $\sum_{k=0}^\infty (-\Delta)^k / k!$, which is related to the Gaussian kernel. \\
Finally due to some results within \cite{butzer2013semi} he presumed that while the might not be a $d/2$ gap for Sobolev kernels, the situation might be different for the Gaussian (or other analytic kernels). \\
And here I agree: If we check the proof of \cite[Theorem 6.1]{schaback2002inverse} and try to run it for the Gaussian kernel, 
we end up looking at Eq.\ \eqref{eq:nonsharp_estimate}:
\begin{align*}
\Vert s_{f, Y} - s_{f, X} \Vert_{\ns}^2 &\leq \Vert A_{Y, \Phi}^{-1} \Vert_{2,2} \Vert s_{f, Y} - s_{f, X} \Vert_{L^2(Y)}^2 \notag \\
&\leq \frac{1}{\gamma_Y} \sum_{y \in Y} |f(y) - s_{f, X}(y)|^2 \notag
\end{align*}
However for the Gaussian kernel we encounter a quite big gap here:
\begin{itemize}
\item According to \cite[Corollary 12.4]{wendland2005scattered}, the smallest eigenvalue for the kernel matrix of the Gaussian kernel $e^{-\alpha \Vert x - y \Vert_2^2}$ is bounded as
\begin{align*}
\lambda_{\min}(A_X) \geq C_d(2\alpha)^{-d/2} e^{-40.61d^2 / (q_X^2 \alpha)} q_X^{-d},
\end{align*}
\item while the error decays slower: 
Instead of having $q_X^{-2}$ in the exponential for the lower bound of the smallest eigenvalue, 
we likely only have $h_X^{-1}$ (and probably some $\log(h_X)$ term) in the exponential of the upper bound on the error (see Theorem 3 within kernel\_approx/32\_conv\_speed\_gaussian or \cite[Theorem 1.2]{karvonen2022approximation} for 1D for $k(x, y) = \exp(-0.5\epsilon^2 (x-y)^2)$):
\begin{align*}
c_{1,2} \left( \frac{\epsilon}{2} \right)^n (n!)^{-1/2} &\leq \inf_{A_n} \sup_{0 \neq f \in \ns} \frac{ \Vert f - A_n f \Vert_{L^2([-1,1])}}{\Vert f \Vert_{\ns}} \\
&\leq 2 \sqrt{2c_L(\epsilon^2)} n^{-1/8} e^{\epsilon\sqrt{n}} \left( \frac{\epsilon}{2} \right)^n (n!)^{-1/2},
\end{align*}
where $A_n$ is a linear approximation using standard information.
Recalling
\begin{align*}
\sqrt{2\pi} n^{n + 1/2} e^{-n} < n! < \sqrt{2\pi} n^{n + 1/2} e^{-n+1}
\end{align*}
we can see that the error decay has only the rate $e^{-\log(n)n}$ (corresponding to $e^{\log(h)h}$ instead of $e^{-n^2}$ (corresponding to $e^{h^{-2}}$).
\end{itemize}
Alltogether this means that the proof strategy of \cite[Theorem 6.1]{schaback2002inverse} via Eq.\ \eqref{eq:nonsharp_estimate} does not yield a convergence of $\Vert s_{f,Y} - s_{f, X} \Vert_{\ns}$ for functions with a worst case optimal decay rate.
I.e.\ either the proof strategy of \cite[Theorem 6.1]{schaback2002inverse} does not work for Gaussian kernels (and probably other analytic kernels) or there is indeed a (huge) gap for the Gaussian kernel. \\
It might be interesting to check whether \cite{karvonen2022approximation} also has some statements on the lowest eigenvalue of the kernel matrix (or whether it is possible to derive some based on those techniques).

Maybe we can see here some connection between \textit{closing the $d/2$ gap} but having an \textit{analytic gap}.

\subsection{Numerical results}

In order to get a better feeling for this, I wanted to run similar experiments as done in Figure \ref{fig:brownian_bridge_example} -- however due to numerical issues this did not work out properly.

\section{Ideas}

\begin{itemize}
\item We could replace $f$ by $s_N, \Vert f - s_N \Vert_{L^\infty(\Omega)} < \epsilon$ and then split
\begin{align*}
\Vert f - s_n \Vert_{L^\infty(\Omega)} &\leq \Vert f - s_N \Vert_{L^\infty(\Omega)} + \Vert s_N - s_n \Vert_{L^\infty(\Omega)} \\
&\leq \epsilon + \Vert s_N - s_n \Vert_{L^\infty(\Omega)} 
\end{align*}
However this does not give us much, as we cannot apply the assumption on the approximability of $f$ anymore.
\item Does the discrete $L^2$ error also decay faster (by a rate of $h^{d/2}$) than the $L^\infty$ error for functions outside the RKHS?
This might shed light onto where to look for the missing term to come from.
\item asdf
\item What happens, if we directly assume a bound on the $\Vert \cdot \Vert_{L^2(\Omega)}$ error instead of on the $\Vert \cdot \Vert_{L^\infty(\Omega)}$ error?
Then we only somehow need to go for the discrete $L^2(Y)$ error, but then the proof should work out!
\end{itemize}

\begin{align*}
\left \Vert \sum_{i=1}^n \alpha_i k(\cdot, x_i) \right \Vert_{\Ht}, \qquad 0 < \vartheta < 1
\end{align*}

~ \newpage 

\section{Idea from 24.11.22}

\begin{theorem}
\label{th:inverse_statement_improved}
Let $\Omega \subset \R^d$ be a bounded and open domain satisfying an interior cone condition. 
The positive definite basis function $\Phi$ should satisfy the decay condition \ref{eq:decay_condition}. 
Suppose further that for some $f \in C(\Omega)$ there exists constants $\mu > 0$ and $c_f > 0$ such that $\Vert f - s_{f, X}\Vert_{L^\infty(\Omega)} \leq c_f h_X^\mu$ for all $X \subset \Omega$ with $h_X$ sufficiently small. 
If $2\mu > s_\infty$, then $f$ must belong to the native space $\ns$.
\end{theorem}

From the assumption it follows directly that
\begin{align*}
\Vert f - s_{f, X}\Vert_{L^\infty(\Omega)} &\leq \frac{c_f}{\Vert f \Vert_{L^2(\Omega)}} h_X^\mu \cdot \Vert f \Vert_{L^2(\Omega)} \\ 
&=: c' h_X^\mu \cdot \Vert f \Vert_{L^2(\Omega)}
\end{align*}

\section{Idea from 18.11.22}

We need the following Corollary, which is a simplification of e.g.\ Corrollary 2 or 3 from \textit{Continuous superconvergence from Gabriele}:
\begin{cor}
Consider $\theta_1 < \theta < \theta_2$. Then under some assumptions it holds 
\begin{align}
\Vert g - \Pi(g) \Vert_{\mathcal{H}_{\theta}} &\leq \Vert g - \Pi(g) \Vert_{\mathcal{H}_{\theta_1}}^{\frac{\theta_2 - \theta}{\theta_2 - \theta_1}} \cdot \Vert g - \Pi(g) \Vert_{\mathcal{H}_{\theta_2}}^{\frac{\theta - \theta_1}{\theta_2 - \theta_1}} \notag \\
\Rightarrow \Vert s_{f, Y} - s_{f, X} \Vert_{\mathcal{H}_{\theta}} &\leq \Vert s_{f, Y} - s_{f, X} \Vert_{\mathcal{H}_{\theta_1}}^{\frac{\theta_2 - \theta}{\theta_2 - \theta_1}} \cdot \Vert s_{f, Y} - s_{f, X} \Vert_{\mathcal{H}_{\theta_2}}^{\frac{\theta - \theta_1}{\theta_2 - \theta_1}} \notag \\
&\leq \Vert s_{f, Y} - s_{f, X} \Vert_{L^2(\Omega)}^{1 - \theta} \cdot \Vert s_{f, Y} - s_{f, X} \Vert_{\ns}^{\theta}
\end{align}
\end{cor}

\begin{proof}
Can be proven elementary via Hölder inequality.
The conclusion is done by inserting $g = s_{f, Y}, \Pi(g) = s_{f, X}$ and $\theta_1 = 0, \theta_2 = 1$.
\end{proof}

Now we state and prove the following improvement of Theorem \ref{th:inverse_statement}, which replaced the assumption $2\mu > s_\infty + d$ by $2\mu > s_\infty$ and thus removed the $d/2$ gap.

\begin{theorem}
\label{th:inverse_statement_improved}
Let $\Omega \subset \R^d$ be a bounded and open domain satisfying an interior cone condition. 
The positive definite basis function $\Phi$ should satisfy the decay condition \ref{eq:decay_condition}. 
Suppose further that for some $f \in C(\Omega)$ there exists constants $\mu > 0$ and $c_f > 0$ such that $\Vert f - s_{f, X}\Vert_{L^\infty(\Omega)} \leq c_f h_X^\mu$ for all $X \subset \Omega$ with $h_X$ sufficiently small. 
If $2\mu > s_\infty$, then $f$ must belong to the native space $\ns$.
\end{theorem}

\tw{The proof idea is good in my opinion, however it does not work out :(}

\cite{schaback2002inverse} assumed additional convergence orders to show that the function is included in the original space.
Here we don't want to assume additional convergence orders, but instead show (as a first step) that the function in included in a less smooth space (in terms of interpolation spaces).
This then enables us to obtain faster error rates for the $L^2(X)$ error, which are faster by a rate of $h_X^{d/2}$ compared to the straightoforward derived rates based on the $L^\infty(\Omega)$ error.

\begin{proof}
We start similar to the proof in \cite{schaback2002inverse}, but consider a weaker norm on the left hand side, to which we apply Corollary \ref{cor:hoelder_interpol}:
\begin{align*}
\Vert s_{f, Y} - s_{f, X} \Vert_{\Ht}^2 &\leq \Vert s_{f, Y} - s_{f, X} \Vert_{L^2(\Omega)}^{2 - 2\vartheta} \cdot \Vert s_{f, Y} - s_{f, X} \Vert_{\ns}^{2\vartheta} \\
&\leq \Vert s_{f, Y} - s_{f, X} \Vert_{L^2(\Omega)}^{2 - 2\vartheta} \cdot \Vert A_{Y, \Phi}^{-1} \Vert_{2,2}^{\vartheta} \cdot \Vert s_{f, Y} - s_{f, X} \Vert_{L^2(Y)}^{2\vartheta}
\end{align*}
where we estimated the $\Vert s_{f, Y} - s_{f, X} \Vert_{\ns}$ norm just in the standard way as done in \cite{schaback2002inverse}.
Observe that for $\vartheta = 1$ we reobtain the standard statement.
The introduction of $0 < \vartheta < 1$ thus allowed us to slow down the growing factor $\Vert A_{Y, \Phi}^{-1} \Vert_{2,2}$,
while the two $L^2$ errors approximately stay the same because their exponents cancel: $(2 - 2\vartheta) + 2\vartheta = 2$.

Now we could try to estimate the $\Vert s_{f, Y} - s_{f, X} \Vert_{L^2(\Omega)}$ and $\Vert s_{f, Y} - s_{f, X} \Vert_{L^2(Y)}$ factor in a sophisticated way, but as a first step we estimate them in the straightforward way by using the $\Vert \cdot \Vert_{L^\infty(\Omega)}$ bound:
\begin{align*}
\Vert s_{f, Y} - s_{f, X} \Vert_{L^2(\Omega)} &\leq \sqrt{|\Omega|} \cdot \Vert s_{f, Y} - s_{f, X} \Vert_{L^\infty(\Omega)} \\
&\leq \sqrt{|\Omega|} \cdot \left( \Vert f - s_{f, Y} \Vert_{L^\infty(\Omega)} + \Vert f - s_{f, X} \Vert_{L^\infty(\Omega)} \right) \\ 
&\leq 2C \sqrt{|\Omega|} \cdot h_X^\mu \\
\Vert s_{f, Y} - s_{f, X} \Vert_{L^2(X)} &\leq \sqrt{|Y|} \cdot \Vert s_{f, Y} - s_{f, X} \Vert_{L^\infty(\Omega)} \\
&\leq \sqrt{|Y|} \cdot \left( \Vert f - s_{f, Y} \Vert_{L^\infty(\Omega)} + \Vert f - s_{f, X} \Vert_{L^\infty(\Omega)} \right) \\ 
&\leq 2C \sqrt{|Y|} \cdot h_X^\mu
\end{align*}
Additionally using $\Vert A_{Y, \Phi}^{-1} \Vert_{2,2} \leq c q_Y^{-s_\infty}$ we obtain alltogether (merging all constants as $C$):
\begin{align}
\begin{aligned}
\label{eq:important_estimate}
\Vert s_{f, Y} - s_{f, X} \Vert_{\Ht}^2 &\leq \Vert s_{f, Y} - s_{f, X} \Vert_{L^2(\Omega)}^{2 - 2\vartheta} \cdot \Vert A_{Y, \Phi}^{-1} \Vert_{2,2}^{\vartheta} \cdot \Vert s_{f, Y} - s_{f, X} \Vert_{L^2(Y)}^{2\vartheta} \\
&\leq C q_Y^{-s_\infty \vartheta} \cdot h_X^{(2-2\vartheta)\mu} \cdot h_X^{2\vartheta \mu} |Y|^\vartheta. 
\end{aligned}
\end{align}
Now --- as in \cite{schaback2002inverse} --- we consider $X_n$ such that $|X_n| \leq c 2^{nd}$ and
\begin{align*}
c_1 2^{-n} \leq q_{X_n} \leq h_{X_n} \leq c_2 2^{-n}.
\end{align*}
Using $X_{n+1} = Y \supset X = X_{n+1}$ we obtain:
\begin{align*}
\Vert s_{f, X_{n+1}} - s_{f, X_n} \Vert_{\Ht}^2 &\leq C q_{X_{n+1}}^{-s_\infty \vartheta} \cdot h_{X_{n}}^{(2-2\vartheta)\mu} \cdot h_{X_n}^{2\vartheta \mu} |X_{n+1}|^\vartheta \\
&\leq C 2^{s_\infty \vartheta (n+1)} \cdot 2^{-(2-2\vartheta)\mu n} \cdot 2^{-2\vartheta \mu n} 2^{(n+1)d \vartheta} \\
&\leq C 2^{s_\infty \vartheta (n+1) -(2-2\vartheta)\mu n -2\vartheta \mu n + (n+1)d \vartheta} \\
&\leq C 2^{s_\infty \vartheta n -(2-2\vartheta)\mu n -2\vartheta \mu n + n d \vartheta} \cdot 2^{s_\infty \vartheta + d \vartheta} \\
&\leq C 2^{-n(-s_\infty \vartheta + (2-2\vartheta)\mu + 2\vartheta \mu  - d \vartheta)} \cdot 2^{s_\infty \vartheta + d \vartheta} \\
&\leq C 2^{-n(-s_\infty \vartheta + 2\mu - d \vartheta)} \cdot 2^{s_\infty \vartheta + d \vartheta} \\
&\leq C 2^{-n(2\mu -(s_\infty + d) \vartheta)} \cdot 2^{s_\infty \vartheta + d \vartheta} \\
\end{align*}
Not that for $\vartheta = 1$ we reobtain the same expression as in \cite{schaback2002inverse}, i.e.\ we didn't make too many mistakes.
In order to use the telescoping sum argument from \cite{schaback2002inverse}, we need
\begin{align*}
0 \stackrel{!}{<} 2\mu -(s_\infty + d) \vartheta \Leftrightarrow (s_\infty + d) \vartheta \stackrel{!}{<} 2\mu 
\Leftrightarrow \vartheta &\stackrel{!}{<} \frac{2\mu}{(s_\infty + d)}
\end{align*}
Thus for any $\vartheta < 2\mu / (s_\infty + d)$ we obtain a limiting element $f_\vartheta \in H_\vartheta$,
such that $s_{f, X_n} \rightarrow f_\vartheta$ in $\Vert \cdot \Vert_{\Ht}$ for $n \rightarrow \infty$ by following the argumentation used in \cite{schaback2002inverse}.
\begin{itemize}
\item If $2\mu > s_{\infty} + d$, then $ 2\mu / (s_\infty + d) > 1$ and we can choose $\vartheta = 1$ to conclude $f_\vartheta = f \in \ns$. 
This was the way it was done in \cite{schaback2002inverse}.
\item Here we have $2\mu > s_{\infty}$, thus $ 2\mu / (s_\infty + d) > s_\infty / (s_\infty + d)$. This means $f_\vartheta \in H_{s_\infty / (s_\infty + d) - \varepsilon} \supset H_{s_\infty / (s_\infty + d)} \supset \mathcal{H}_1 = \ns$ for all $\varepsilon > 0$.
\item \tw{It might be interesting to check whether we the smoothness $\tau$ of $\Ht$ is sufficiently hight, i.e.\ $\tau > d/2$ for Sobolev embedding.
On the other hand $f \in \mathcal{C}(\Omega)$ was stated as an assumption.} 
\end{itemize}

This limiting element $f_\vartheta \in \Ht$ allows us to use sampling inequalities or anything, which gives us an additional $n^{-1/2} = h_X^{d/2}$ convergence order for $\Vert f_\vartheta - s_{f, X_n} \Vert_{L^2(\Omega)}$ and hopefully also $\Vert f_\vartheta - s_{f, X_n} \Vert_{L^2(X)}$. 
\tw{This needs to be done more thoroughly, check e.g.\ Subsection \textit{2.2.2. Sobolev bounds on discrete q norms} from \cite{narcowich2005sobolev}.}.
However the convergence order is now determined by the smoothness of $f_\vartheta$, 
which is given as $\tau = \vartheta \cdot \frac{s_\infty + d}{2}$, 
thus we can expect:
\begin{align}
\begin{aligned}
\label{eq:additional_order}
\Vert f_\vartheta - s_{f, X_n} \Vert_{L^2(\Omega)} \leq C h_{X_n}^{\vartheta \cdot \frac{s_\infty + d}{2}}, \\
\Vert f_\vartheta - s_{f, X_n} \Vert_{L^2(X)} \leq C h_{X_n}^{\vartheta \cdot \frac{s_\infty + d}{2}},
\end{aligned}
\end{align}
where I'm not that sure about the second one (discrete error).
Using $\vartheta = s_\infty / (s_\infty + d)$ we obtain 
\begin{align*}
\vartheta \cdot \frac{s_\infty + d}{2} = \frac{s_\infty}{2},
\end{align*}
such that Eq.\ \eqref{eq:additional_order} turns into
\begin{align}
\begin{aligned}
\label{eq:additional_order}
\Vert f_\vartheta - s_{f, X_n} \Vert_{L^2(\Omega)} \leq C h_{X_n}^{s_\infty / 2},
\end{aligned}
\end{align}
which is however exactly the same rate of convergence which we derive by directly using the $L^\infty(\Omega)$ bound.
The goal is to derive a convergence rate as
\begin{align*}
\frac{s_\infty + d}{2}.
\end{align*}
\tw{I think here we need to dig into more details:
We know already the $\Vert f - s_{f, X} \Vert_{L^\infty(\Omega)} \leq c_f h_X^\mu, \mu > s_\infty/2$ error bound.
Therefore we should be able to expect a faster error decay for $\Vert f - s_{f, X} \Vert_{L^2(\Omega)}$.
However, using the argumentation which follows, we do not obtain any faster rate of convergence!}

~ \\
\hrule
~ \\

\textcolor{red}{The remaining text is not important (it's just how to conclude the proof if we had obtained a faster convergence rate!))}

Now we can go back to the original expression
\begin{align*}
\Vert s_{f, Y} - s_{f, X} \Vert_{\ns}^2 &\leq \Vert A_{Y, \Phi}^{-1} \Vert_{2,2} \Vert s_{f, Y} - s_{f, X} \Vert_{L^2(Y)}^2
\end{align*}
and estimate the right hand side $\Vert \cdot \Vert_{L^2(Y)}^2$ expression in a better way by exploiting the additional $h_X^{d/2}$ convergence order from Eq.\ \eqref{eq:additional_order}:
\begin{align*}
\Vert s_{f, Y} - s_{f, X} \Vert_{L^2(Y)} &\leq \Vert f_\vartheta - s_{f, Y} \Vert_{L^2(Y)} + \Vert f_\vartheta - s_{f, X} \Vert_{L^2(Y)} \\
&\leq C h_Y^{\vartheta \cdot \frac{s_\infty + d}{2}} + C h_X^{\vartheta \cdot \frac{s_\infty + d}{2}} \\
&\leq 2C h_X^{\vartheta \cdot \frac{s_\infty + d}{2}}.
\end{align*}
Using $\vartheta = s_\infty / (s_\infty + d)$ we obtain 
\begin{align*}
\vartheta \cdot \frac{s_\infty + d}{2} = \frac{s_\infty}{2},
\end{align*}
which is however exactly the same rate of convergence which we derive by directly using the $L^\infty(\Omega)$ bound.
The goal is to derive a convergence rate as
\begin{align*}
\frac{s_\infty + d}{2}.
\end{align*}
\end{proof}

\section{Ideas/thoughts collected on 04.12.22}

\begin{enumerate}
\item So far I considered $L^\infty$ estimates and tried to derive statements for the $L^2$ error.
However maybe the reverse direction might work as well and even be easier: 
We start with $L^2$ statements and derive statements on the $L^\infty$ error.
In particular I thought whether it is possible to derivate something like:
Assume the $L^2$ error decays with a given rate. 
Then it follows that the $L^\infty$ error decays with a slower rate.
This means: 
Is it possible to do the proof of \cite{wendland1999meshless} the other way round?
With such statements it might be possible to derive something via contraposition.
\begin{enumerate}
\item The best possible $L^2$ error is lower bounded by the eigenvalues of the kernel. 
Look up this statement and try to get something out of it.
\end{enumerate}
\item I thought whether some flat limit statements could help us.
However they can be only / mostly used to derive estimates on the power function, which are however not helpful.
\item I think a crucial question is: What is the difference between data adapted points and uniformly distributed points?
Because also uniformly distributed points can be data adapted (i.e.\ when $f$-greedy yields uniformly distributed points), 
i.e.\ if $P$-greedy and $f$-greedy provide the same rate of convergence. \\
However in my experience this mostly happens, if the function is quite smoooth.
\begin{enumerate}
\item Can we show that: "If a function is at the boundary of the RKHS, then the corresponding convergence rate (for $P$-greedy points) is given by the convergence rate of the Power function decay?
This makes sense if I think about my Wendland $k=0$ example.
And furthermore it would allow us to use the $d/2$ gap: 
Assume $f_\vartheta \in \Ht$ for some $0 < \vartheta < 1$, but not in a smoother space.
Then the previous conjecture (extended by some escaping the native space and superconvergence statements) provides that we cannot converge as fast as given by the $\mu > s_\infty/2$ assumption, i.e.\ a contradiction.
\item Therefore check again the $L^2$ convergence rates for the Wendland $k=0$ example.
I expect them to nicely follow my assumed conjecture, i.e.\ that the smoothness directly scales with the convergence rate!
It think so far I only analyzed the $L^\infty$ convergence rates.
\end{enumerate}
\item I was wondering whether the $L^\infty$ norm is the intrinsic norm for error estimate statements or whether the $L^2$ space is the correct norm to deal with. 
This is also related to the above points of trying to start with $L^2$ estimates and then derive $L^\infty$ estimates.
\item Lower bound for the $L^2$ approximation rate for some function:
Use power space arguments in conjunction with escaping the native space/continuous superconvergence statements: 
Consider $f \in H_\theta, 0 < \theta < 2$, then consider kernel interpolation using the $k_\theta$ kernel and use the lower bound on the $L^2$ approximation error due to the decay of the kernel eigenvalues - which should be deriveable from the decay of the $k$ eigenvalues.
Then move to the $k$ interpolant and obtain hopefully the same bound there!
\item For any kind of required localization argument, we can maybe make use of my \textit{stability of convergence rates} results:
Here we showed that the convergence rates stay the same when going to subdomains.
Only challenging thing: We do not have that basic assumption of $f \in \ns$.
\end{enumerate}

\subsection{Worst-case bounds on $L^2$ error}

We start by citing an $L^2$ bound on the interpolation error, here taken from \cite[Section 3]{santin2016approximation} (see especially 28\_lower\_bound\_approx):

\begin{theorem}
\label{th:bounds_kolmogorov_width}
Let $\lambda_n$ be the eigenvalues of the kernel integral operator.
Let $S(\ns)$ be the unit ball of $\ns$.
Then for the Kolmogorov $n$-width $d_n$ and the quantity $\kappa_n$ it holds
\begin{align*}
d_n &:= \inf_{V_n \subset L^2, \dim(V_n)=n} \sup_{f \in S(\ns)} \Vert f - \Pi_{L^2, V_n}(f) \Vert_{L^2} = \sqrt{\lambda_{n+1}} \\
\kappa_n &:= \inf_{V_n \subset L^2, \dim(V_n)=n} \sup_{f \in S(\ns)} \Vert f - \Pi_{\ns, V_n}(f) \Vert_{L^2} = \sqrt{\lambda_{n+1}}
\end{align*}
\end{theorem}

Therefore we know that the error of best approximation is related / limited by the eigenvalue decay.

The initial idea was to combine Theorem \ref{th:bounds_kolmogorov_width} with Lemma \ref{lem:bounded_Linfty_convergence}.
However 
\begin{enumerate}
\item Theorem \ref{th:bounds_kolmogorov_width} does not involve the $\ns$-norm of the residual!
Maybe this could be found my closely inspecting the original proofs (mind that we already consider $f \in S(\ns)$, i.e.\ in the unit ball).
\item Theorem \ref{th:bounds_kolmogorov_width} has statements on $\sup_{f \in S(\ns)}$, i.e.\ no statement for a fixed function! 
\end{enumerate}

\subsection{Idea: Escaping native space / superconvergence using lower bounds}

So far, \cite{santin2023continuous} derive or improved upper bounds on several quantities using interpolation spaces and result from interpolation theory.
Can we also use the same arguments and results from interpolation theory, to lower bound some intermediate quantities? \\
We recall
\begin{align*}
\Vert g - \Pi(g) \Vert_{\mathcal{H}_{\theta}} \leq \Vert g - \Pi(g) \Vert_{\mathcal{H}_{\theta_1}}^{\frac{\theta_2 - \theta}{\theta_2 - \theta_1}} \cdot \Vert g - \Pi(g) \Vert_{\mathcal{H}_{\theta_2}}^{\frac{\theta - \theta_1}{\theta_2 - \theta_1}}
\end{align*}

\begin{enumerate}
\item Either we use this standard estimate and lower bound quantities before the interval $(\theta, \theta_2)$ or behind the interval $(\theta_1, \theta)$.
\item Or we use directly lower estimates, provided by considering inverse operators. 
However this might be challenging, as the operator in consideration is a projection and therefore not invertable.
\end{enumerate}

~ \newpage

\section{Removed stuff}

\noindent The following two quantities are of interest:
\begin{enumerate}
\item Bound on continuous $L^2(\Omega)$ norm:
\begin{align}
\label{eq:quantity_cont_L2}
\sup_{X \subset \Omega, h_X / q_X \leq c_0} \Vert f - \Pi_X(f) \Vert_{L^2(\Omega)} \leq c_f h_X^{\mu}
\end{align}
\item Bound on discrete $L^2(\Omega)$ norm:
\begin{align}
\label{eq:quantity_discr_L2}
\sup_{X \subset \Omega, h_X / q_X \leq c_0} \sup_{Y \subset \Omega, h_Y \leq h_X} \Vert f - \Pi_X(f) \Vert_{\ell^2(Y)} \leq c_f h_X^{\mu}
\end{align}
\end{enumerate}
We would like to show:
\begin{align*}
\text{Eq.\ } \eqref{eq:quantity_cont_L2} \Leftrightarrow \text{Eq.\ } \eqref{eq:quantity_discr_L2}.
\end{align*}

\begin{itemize}
\item["$\Leftarrow$"] One should be able to prove this by using a fine grained $Y \subset \Omega$, such that we have a fine discretization of $\Omega$, then applying Monte Carlo like arguments.
\item["$\Rightarrow$"] No clue. 
Some argument would be required that the considered function cannot be to spiky.
This should work due to bounded RKHS norm (or at least some interpolation space norm).
\end{itemize}

~ \newpage

\section{Miscellaneous}

\subsection{Comments on the requirements within \Cref{sec:new_approach_via_L2}}

In \Cref{sec:new_approach_via_L2} we used the assumption
\begin{align*}
\forall_{X \subset \Omega, h_X/q_X \leq c_0}
\end{align*}
frequently, and here I want to comment on two points regarding this:
\begin{enumerate}
\item We used (asymptotically) uniformly distributed points, measured in terms of the ratio $h_X/q_X \leq c_0$: \\
If we lift that assumption, we could also consider $f$-greedy points.
However, simple examples using the Brownian bridge kernel on the scale of test functions $f_\alpha(x) = x^\alpha \cdot (1-x)^\alpha$ reveals,
that $f$-greedy always provides an $L^\infty(\Omega)$ convergence rate of $n^{-2}$, independent of the value of $\alpha$, 
and therefore also for superconvergence and escaping the native space functions.
In particular this $L^\infty(\Omega)$ convergence rates translates into a $L^2(\Omega)$ convergence rate, which is already the best possible convergence rate \cite[Section 6.2]{wenzel2023analysis}.
This reveals that for $f$-greedy selected points it is not possible to conclude the smoothness of the underlying function for either 
I assume that the convergence rate stays the same also for weak $f$-greedy points, i.e.\ it's more about the asymptotic distribution.
\item We used that the convergence rate needs to hold \textit{for all} such (asymptotically) uniform point distributions: \\
Also this assumption seems to be crucial when we think about functions $f^* = \sum_{j=1}^N \alpha_j k(\cdot, x_j^*)$ for a fixed set of centers $X^* := \{ x_1^*, ..., x_N^*\} \subset \Omega$.
If we pick $X \supset X^*$, then our interpolant is exact and we even obtain an error of zero, i.e.\ we have even any convergence rate.
However, as we take the supremum over any set of (asymptotically) uniformly distributed points, we obtain also not-exact approximations and therefore some convergence rate. \\
This exact convergence for $f^*$ likely depends on the discussion of the next subsection, i.e.\ on the power space $\HT$ where $k(\cdot, x)$ belongs to (for all $x \in X^*$).
\end{enumerate}

\subsection{$k(\cdot, x) \in \mathcal{H}^\Theta$ for some $\Theta > 0$?}
\label{subsec:kernel_transl_smoother}

From time to time I noticed that the kernel function $k(\cdot, x) \in \ns$ for fixed $x \in \Omega$ is usually smoother than the RKHS $\ns$. 
This raises the question how much additional smoothness we usually have. \\
Under some assumptions, we can decompose the kernel as
\begin{align*}
k(x, y) = \sum_{j=1}^\infty \lambda_j \varphi_j(x) \varphi_j(y),
\end{align*}
where $\{ \varphi_j \}_{j=1}^\infty$ is a orthonormal basis in $L^2(\Omega)$ and it holds $\Vert \varphi_j \Vert_{\ns}^2 = \frac{1}{\lambda_j}$ for all $j = 1, 2, ...$.
Then the power spaces are given by
\begin{align*}
\mathcal{H}^\Theta = \left\{ f \in L^2(\Omega) ~ | ~ \sum_{j=1}^\infty \frac{|\langle f, \varphi_j \rangle_{L^2(\Omega)}|^2}{\lambda_j^\Theta} < \infty \right\} 
 = \left\{
\begin{array}{ll}
L^2(\Omega), & \Theta = 0 \\
\ns, & \Theta = 1 \\
TL^2(\Omega), & \Theta = 2
\end{array}
\right.
\end{align*}
Now we consider $k(\cdot, x) = \sum_{j=1}^\infty \lambda_j \varphi_j(\cdot) \varphi_j(x)$ and calculate by using the $L^2(\Omega)$ orthonormality
\begin{align*}
\sum_{j=1}^\infty \frac{|\langle k(\cdot, x), \varphi_j \rangle_{L^2(\Omega)}|^2}{\lambda_j^\Theta}
&= \sum_{j=1}^\infty \frac{|\langle \sum_{i=1}^\infty \lambda_i \varphi_i(\cdot) \varphi_i(x), \varphi_j \rangle_{L^2(\Omega)}|^2}{\lambda_j^\Theta} \\
&= \sum_{j=1}^\infty \frac{\lambda_j^2 |\varphi_j(x)|^2}{\lambda_j^\Theta} = \sum_{j=1}^\infty \lambda_j^{2 - \Theta} |\varphi_j(x)|^2 \stackrel{!}{<} \infty.
\end{align*}
\tw{This seems to be equivalent to the question for the minimal $\theta > 0$ such that the power kernel from Eq.~\eqref{eq:power_kernel} is actually a kernel (which is apparently an unsolved question in itself).}
\textcolor{gray}{
This shows that the question for which $\Theta > 1$ we have $k(\cdot, x) \in \mathcal{H}^\Theta$ solely depends on the decay rate of the eigenvalues. \\
Assuming $\lambda_j \asymp j^{-\alpha}$ for some $\alpha > 1$ we have 
\begin{align*}
\sum_{j=1}^\infty \lambda_j^{2 - \Theta} \asymp \sum_{j=1}^\infty j^{-\alpha(2 - \Theta)} \stackrel{!}{<} \infty
\Leftrightarrow \alpha (2-\Theta) \stackrel{!}{>} 1 
\Leftrightarrow \Theta < 2 - \frac{1}{\alpha}.
\end{align*}
Ths shows that for very smooth kernels (i.e.\ large $\alpha \gg 1$), the function $k(\cdot, x)$ is almost contained in the image of the kernel integral operator due to $\Theta \lesssim 2$.
For $1 < \alpha \rightarrow 1$, i.e.\ very unsmooth kernels, the function $k(\cdot, x)$ is is only contained in a very small range of interpolation spaces because  due to $\Theta \gtrsim 1$. \\
I discussed this once with Daniel Winkle, see 12.08.22\textbackslash A2.}

\subsection{Interesting references}

\begin{itemize}
\item "STABILITY ON INTERPOLATION OF SCATTERED DATA VIA KERNELS" might be worth citing, despite it is not too much related. $\rightarrow$ If I recall correctly, this paper has several flaws, mistakes, and non-sharp statements.
\item Check papers from Toni Karvonen: \url{https://arxiv.org/pdf/2203.05400.pdf} or \url{https://arxiv.org/pdf/2001.10965.pdf}
\end{itemize}

\section{Removed text snippets}

I might be possible to leverage results from escaping the native space. Idea:

\begin{enumerate}
\item When we assume in Theorem \ref{th:inverse_statement} only a decay rate of $h_X^\mu$ with $2\mu > s_\infty$ (instead of $> s_\infty + d$),
then it might be possible to show that our function is included in some interpolation space $\Ht$ for some $0 < \vartheta < 1$. \\
The only tricky part hereby might be the change from the kernel $k$ to the kernel $k_\vartheta$, which however might work by inserting a zero
\item Now working in the space $\Ht$ we should be able to conclude that discrete $L^2$ error from Eq.\ \eqref{eq:discrete_L2_error} decays actually faster.
Here we should be able to leverage statements from Sobolev spaces, because for $0 < \vartheta < 1$ the interpolation space $\Ht$ is norm equivalent to a Sobolev space with reduced smoothness.
\item This faster convergence rate for the discrete $L^2$ error might then help us to conclude that we are even contained in the RKHS!
\end{enumerate}

~ \\
\hrule
~ \\

\noindent More details: 

\begin{itemize}
\item First approach:
\begin{align*}
\Vert s_{f, Y} - s_{f, X} \Vert_{\Ht}^2 \leq (?) \cdot \Vert s_{f, Y} - s_{f, X} \Vert_{L^2(X)}^2,
\end{align*}
where $(?)$ needs to be investigated: 
I guess this is some kernel matrix of the interpolation kernel $k_\theta$.
In that case one could use known lower bounds on the smallest eigenvalue, because everything should only depend on the smoothness on the corresponding RKHS, which should be norm equivalent to a Sobolev space of smaller smoothness. \\
\end{itemize}

\section{Some figures}

\begin{figure}[h]
\centering
\setlength\fwidth{.75\textwidth}
\input{Figures/d-2-gap_fig8.tex}
\input{Figures/d-2-gap_fig9.tex}
\setlength\fwidth{.41\textwidth}
\input{Figures/d-2-gap_fig5.tex}
\input{Figures/d-2-gap_fig7.tex}
\caption{
Visualization of different quantities ($y$-axis) over the number of points in $X_n$ ($x$-axis): \newline
Top: Visualization of the \textit{discrete $L^\infty$ error} according to Eq.\ \eqref{eq:discrete_Linfty_error}.
Middle: Visualization of the \textit{discrete $L^2$ error} according to Eq.\ \eqref{eq:discrete_L2_error}.
Bottom left: Visualization of the estimate from Eq.\ \eqref{eq:nonsharp_estimate}. The dashed line shows Eq.\ \eqref{eq:squared_rkhs_norm_diff}, and the solid line shows the upper bound \eqref{eq:upper_estimate}.
Bottom right: Visualization of the ratio of Eq.\ \eqref{eq:ratio_quantities}.
}
\label{fig:brownian_bridge_example}
\end{figure}

\section{Removed stuff 2}

\begin{theorem}
\label{th:lower_bound_1}
Let $f \in \ns$ but $f \notin \Ht$ for any $\vartheta > 1$.
Then for any $c_0 > 2$ and $\epsilon > 0$ there exists a $C > 0$ such that it holds 
\begin{align*}
\sup_{X \subset \Omega, h_X / q_X \leq c_0} \Vert f - \Pi_X(f) \Vert_{L^2(\Omega)} \geq C h_X^{\frac{s_\infty + d}{2} + \epsilon}
\end{align*}
\end{theorem}

\noindent Note that we do not need the $\Vert f - \Pi_X(f) \Vert_{\ns}$ norm on the right hand side!

\begin{proof}
We distinguish two cases:
\begin{enumerate}
\item Let $f$ be a finite sum of kernel translates with centers $x_j \in \overline{\Omega}$, 
i.e.\ $f = \sum_{j=1}^n \alpha_j k(\cdot, x_j)$. 
Then, due to \Cref{subsec:kernel_transl_smoother} we have that $k(\cdot, x) \in \Ht$ for some $\vartheta > 1$, and thus $f \in \Ht$.
However then \Cref{th:L2_conv_rate} provides a convergence rate of $\vartheta \cdot \left( \frac{s_\infty + d}{2} \right)$, which is a contradiction to the assumption "for any $\epsilon > 0$".
\tw{I had to fix something in \Cref{subsec:kernel_transl_smoother}, i.e.\ it does not work like this here! 
I.e.\ we might need another approach here!}
\item Let $f$ not be a finite sum of kernel translates with centers from $\overline{\Omega}$.
Proof via contradiction: Assume the opposite:
\tw{This is not the opposite! The opposite is $\forall X \subset \Omega$!}
\begin{align*}
\exists_{\epsilon > 0} \exists_{c_0 > 2} \forall_{C>0} \exists_{X \subset \Omega, h_X/q_X \leq c_0} \Vert f - \Pi_X(f) \Vert_{L^2(\Omega)} \leq C h_X^{\frac{s_\infty + d}{2} + \epsilon}.
\end{align*}
Now we pick $C = C_j = 1/j$ and each obtain a corrensponding set $X = X_j \subset \Omega$.
Then $\{ |X_j| \}_{j \in \N}$ must be unbounded:
\begin{itemize}
\item Assume $\{ |X_j| \}_{j \in \N}$ is bounded, then (due to $C_j \rightarrow 0$) there is an arbitrary accurate approximation of $f$ via $\Pi_{X_{j^*}}(f)$ in the $L^2(\Omega)$ norm with finitely many centers $X_{j^*}$, which means that $f$ itself is a finite combination of kernel translates centered in $\overline{\Omega}$. 
This however disagrees with our case distinction.
\end{itemize}
Now (possibly by considering a subsequence) we can assume that $|X_j|$ is monotonically increasing.
Furthermore (possibly by considering another subsequence) we can assume that $h_{X_j}  \asymp 2^{-j}$ (or even faster).
Especially we recall that $q_{X_j} \asymp h_{X_j}$ because the existence of the sets $X_j$ came along with the property of $h_X / q_X \leq c_0$. \\
By our contradiction we have a $\Vert f - \Pi_{X_j} \Vert_{L^2(\Omega)}$ bound, which turns into a $\Vert f - \Pi_{X_j} \Vert_{L^2(Y)}$ bound via \Cref{prop:disc_L2_from_cont_L2}.
Now we can put those sets into the improved proof of Theorem 6.1 of \cite{schaback2002inverse} (see Theorem \ref{th:inverse_statement_improved}),
to show that we are included in a interpolation space for $\vartheta > 1$.
This is a contradiction to the assumption! \\
More details (on the last part) at 05.12.22\textbackslash B2.
\end{enumerate}
\end{proof}

\noindent Now we want to modify the assumption of \Cref{th:lower_bound_1} by modifying $f \in \ns$ to $f \in \Ht$ for some $0 < \vartheta < 2$:

\begin{theorem}
\label{th:lower_bound_2}
Consider $0 < \vartheta < 2$.
Let $f \in \Ht$ but $f \notin \mathcal{H}_{\vartheta_2}$ for any $\vartheta_2 > \vartheta$.
Then for any $c_0 > 2$ and $\epsilon > 0$ there exists a $C > 0$ such that it holds 
\begin{align*}
\sup_{X \subset \Omega, h_X / q_X \leq c_0} \Vert f - \Pi_X(f) \Vert_{L^2(\Omega)} \geq C h_X^{\vartheta \cdot \frac{s_\infty + d}{2} + \epsilon}
\end{align*}
\end{theorem}

Mind that $\Pi_X$ is not actually a projection for $0 < \vartheta < 1$, but we still use it to denote the kernel interpolation based on the interpolation points $X$.

\begin{proof}
\textcolor{gray}{I think this should be possible by leveraging \Cref{th:lower_bound_1} in conjunction with theory of power spaces: \\
We start by assuming the contradiction:
\begin{align*}
\exists_{\epsilon > 0} \exists_{c_0 > 2} \forall_{C>0} \exists_{X \subset \Omega, h_X/q_X \leq c_0} \Vert f - \Pi_X(f) \Vert_{L^2(\Omega)} \leq C h_X^{\Theta \cdot \frac{s_\infty + d}{2} + \epsilon}.
\end{align*}
\tw{Elaborate here! Estimate on discrete $L^2$ norm is now provided via \Cref{prop:disc_L2_from_cont_L2}.}
Now with the same arguments as in the proof of \Cref{th:lower_bound_1}, we can consider an increasing set of interpolation points $\{ X_j \}_{j \in \N}$. Then we recall \Cref{eq:important_estimate}:
\begin{align*}
\Vert s_{f, Y} - s_{f, X} \Vert_{\Ht}^2 &\leq \Vert s_{f, Y} - s_{f, X} \Vert_{L^2(\Omega)}^{2 - 2\vartheta} \cdot \Vert A_{Y, \Phi}^{-1} \Vert_{2,2}^{\vartheta} \cdot \Vert s_{f, Y} - s_{f, X} \Vert_{L^2(Y)}^{2\vartheta}.
\end{align*}
Applying the assumed convergence rates from before and following the remaining parts of the proof of Theorem 6.1 from \cite{schaback2002inverse}, 
we should be able to obtain a limiting element in $\mathcal{H}_\vartheta$ with $\vartheta > \Theta$, 
which is the desired contradiction.}
\end{proof}

\section{Removed stuff 3}

\subsubsection{Some extra calculation}

\tw{Some extra calculation to check for a lower bound}

\begin{align*}
\Vert s_X \Vert_{\ns}^2 &= \alpha^\top A_{X} \alpha = \alpha^\top A_{X} A_{X}^{-1} A_{X} \alpha 
\geq \lambda_{\min}(A_X^{-1}) \cdot \Vert A_X \alpha \Vert_{2}^2 \\
&= \lambda_{\max}(A_X)^{-1} \cdot \Vert A_X \alpha \Vert_{2}^2 \\
& = \lambda_{\max}(A_X)^{-1} \cdot |X| \cdot \Vert s_X \Vert_{L^2(X)}^2
\end{align*}

Together with the statement from \Cref{prop:upper_bound_ns_norm_via_L2} we obtain the bounds

\begin{align*}
\lambda_{\max}(A_X)^{-1} \cdot |X| \cdot \Vert s_X \Vert_{L^2(X)}^2 \leq \Vert s_X \Vert_{\ns}^2 \leq \lambda_{\min}(A_X)^{-1} \cdot |X| \cdot \Vert s_X \Vert_{L^2(X)}^2
\end{align*}

However there is quite a gap between the lower and the upper bound.
As the analysis \Cref{th:inverse_statement_generalized} seems to be quite well, the upper bound rather seems to be frequently sharp: 
Likely the coefficient vector is (at least a bit) aligned with the eigenvector to the smallest eigenvalue.

\else
\fi

\newpage

\end{document}